\newtheorem{thm}{Theorem}[section]
\newtheorem{cor}[thm]{Corollary}
\newtheorem{lem}[thm]{Lemma}
\newtheorem{prop}[thm]{Proposition}
\theoremstyle{definition}
\newtheorem{defn}[thm]{Definition}
\newtheorem{rem}[thm]{Remark}
\numberwithin{equation}{section}
\begin{document}

\title{On the moduli spaces of parabolic symplectic/orthogonal bundles on curves}

\author{Jianping Wang}
\address{{Address of author: School of Mathematical Sciences, University of Science and Technology of China, Hefei, 230026, China.}}
\email{jianpw@ustc.edu.cn}

\author{Xueqing Wen}

\address{{Address of author: Yau Mathematical Sciences Center, Beijing, 100084, China.}}
	\email{\href{mailto:email address}{{xueqingwen@mail.tsinghua.edu.cn}}}

\begin{abstract}

We prove that the moduli spaces of parabolic symplectic/orthogonal bundles on a smooth curve are globally F regular type. As a consequence, all higher cohomology of theta line bundle vanish. During the proof, we develop a method to estimate codimension, and consider the infinite grassmannians for parabolic $G$ bundles.

\end{abstract}

\maketitle

\section{Introduction}

Let $X$ be a variety over an algebraically closed field of positive characteristic and $F_X: X\rightarrow X$ be the absolute Frobenius map. In \cite{MR85} Mehta and Ramanathan introduced the notion ``\emph{F split}": $X$ is said to be F split if the natural morphism $F_X^{\#}: \mathcal{O}_X\rightarrow F_{X*}\mathcal{O}_X$ splits as an $\mathcal{O}_X$ module morphism. Later in \cite{S00} Smith studied a special kind of F split varieties: \emph{globally F regular} varieties(see Section \ref{section 6} for details). F split varieties and globally F regular varieties have many nice properties, for example, the vanishing of higher cohomologies of ample line bundles(nef line bundles in the case of globally F regular varieties).

Although almost all varieties are not F split, some important kind of varieties are, such as flag varieties, toric varieties. In \cite{MR96} Mehta and Ramadas proved that the moduli space of semistable parabolic rank two vector bundles with fixed determinant on a \emph{generic} nonsingular projective curve, are F split. They conjectured that the generic condition can be moved. On the other hand, as mentioned in \cite{SZh18}, this conjecture should be extended into the following: the moduli spaces of semistable parabolic bundles with fixed determinant on \emph{any} nonsingular projective curve are globally F regular.

In \cite{SZh18} Sun and Zhou studied the characteristic zero analogy of this extended conjecture. A variety over a field of characteristic zero is said to be of \emph{globally F regular type} if its modulo $p$ reduction are globally F regular for all $p\gg 0$. They proved that the moduli spaces of semistable parabolic vector bundles on a smooth projective curve over an algebraically closed field of characteristic zero are of globally F regular type. As an application, they can give a \emph{finite dimensional proof} of the so called Verlinde formula in $GL_n$ and $SL_n$ case(\cite{SZh20}).

Globally F regular type varieties have similar vanishing properties as globally F regular varieties, namely all higher cohomologies of nef line bundles are vanishing. Unlike the positive characteristic case, in characteristic zero, all Fano varieties with rational singularities are globally F regular type varieties(\cite{S00}). So globally F regular type varieties can be regarded as a generalization of Fano varieties in characteristic zero, with the vanishing properties retained and hence it is interesting to find examples of globally F regular type varieties.

On the other hand, properties of moduli spaces is a central topic in the study of moduli problems. We already know that for connected simply connected algebraic group $G$, the moduli space of semistable $G$ bundles on a smooth curve is a Fano variety (\cite{KN97}). However, if one consider the moduli space of semistable $G$ bundles with parabolic structure on a smooth curve, then one may not get a Fano variety. As mentioned before, in the case of $G=SL_n$, Sun and Zhou proved that the moduli spaces of semistable parabolic vector bundles with fixed determinant are globally F regular type varieties(\cite{SZh18}). So it encourage us to consider globally F regularity as a reasonable property of moduli spaces of $G$ bundles with parabolic structure on curves.

In this paper, we consider parabolic symplectic and orthogonal bundles over smooth curves. Our main theorem is the following:

\begin{thm}[Main theorem, see Theorem \ref{Main thm}]

The moduli spaces of semistable parabolic symplectic/orthogonal bundles over any smooth projective curve are globally F regular type varieties. As a consequence, any higher cohomologies of nef line bundles on these moduli spaces vanish.

\end{thm}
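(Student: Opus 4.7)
The plan is to adapt the strategy Sun--Zhou \cite{SZh18} used in the $SL_n$ case to parabolic symplectic and orthogonal bundles. At a high level, one realizes the moduli space as a good quotient of an open subset of a large ind-projective variety --- an infinite Grassmannian for parabolic $G$-bundles --- whose global F regularity can be read off from the theory of Schubert varieties, and then transfers the property through the quotient using codimension bounds on the ``bad'' loci.

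First, I would construct the infinite Grassmannian $\mathcal{Q}$ parametrizing parabolic $G$-bundles (with $G = Sp_{2n}$ or $SO_n$) equipped with a trivialization on the complement of a chosen set of points on $X$. Together with a loop/gauge group action incorporating the parabolic decorations at the marked points, this yields a uniformization
\[
\pi:\mathcal{Q}\longrightarrow \mathcal{M}_G^{\mathrm{par}}.
\]
Each finite-dimensional approximation of $\mathcal{Q}$ is an inductive limit of Schubert varieties inside an affine partial flag variety for a classical group, so the compatible Frobenius splittings of those Schubert varieties (after Mathieu, Kumar, Littelmann) make the approximations globally F regular after reduction modulo $p$ for all $p\gg 0$.

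Second, I would invoke a descent criterion in the spirit of Sun--Zhou: if $Y$ is globally F regular, $U\subseteq Y$ is open with $\operatorname{codim}_Y(Y\setminus U)\geq 2$, and $U\to M$ is a good quotient by a connected reductive group, then $M$ is globally F regular. Applied modulo $p$ for all sufficiently large $p$, this yields globally F regular type for $\mathcal{M}_G^{\mathrm{par}}$. The crucial input is a codimension estimate: inside a suitable finite-dimensional approximation of $\mathcal{Q}$, the preimage of the unstable locus of $\mathcal{M}_G^{\mathrm{par}}$ must have codimension at least $2$. This is exactly the ``method to estimate codimension'' advertised in the abstract; it should rest on a Harder--Narasimhan stratification of unstable parabolic $G$-bundles that is compatible with the symplectic/orthogonal form, together with dimension counts on the corresponding Schubert strata. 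Once global F regular type is established, the vanishing of higher cohomology of nef line bundles follows formally, as in \cite{S00,SZh18}.

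The main obstacle I anticipate is the codimension estimate in the presence of the bilinear form. Unlike the $SL_n$ case, the Harder--Narasimhan filtration of an unstable parabolic $Sp/SO$ bundle must be self-dual with respect to the form, so one is forced to track pairs of mutually dual isotropic subbundles simultaneously, and one must verify that no allowed choice of parabolic weights collapses the codimension below $2$. A secondary difficulty is setting up the infinite Grassmannian itself: the parabolic structures at the marked points have to be woven into the loop-group uniformization in a way that still respects the Schubert stratification used for Frobenius splitting, which for classical $G$ is subtler than in type $A$.
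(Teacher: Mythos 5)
There is a genuine gap in your transfer step, and it sits exactly where the real difficulty of the theorem lies. The uniformization map from the infinite Grassmannian to the moduli space is a quotient by the ind-group $\Lambda=G(\mathbb{C}[C^*])$, which is neither finite-dimensional nor reductive, so it is not a good quotient in the GIT sense; and even for honest GIT quotients in characteristic $p>0$ there is no Reynolds-type splitting of $\mathcal{O}_M\rightarrow \pi_*\mathcal{O}_U$ (reductive groups are not linearly reductive in characteristic $p$), so Lemma \ref{generating lemma} cannot be fed with the quotient map. The descent criterion you invoke ``in the spirit of Sun--Zhou'' is not what their result says: Proposition \ref{key prop} does not descend global F-regularity from an ambient (ind-)variety through a quotient; it transfers the property between two GIT quotients $Z$ and $Y$ of two finite-type parameter spaces related by a $G$-invariant $p$-compatible morphism $\hat f$ with $\hat f_*\mathcal{O}_{\mathcal{R}'}=\mathcal{O}_{\mathcal{R}}$, and its essential input is that the \emph{source quotient} $Z$ is already of globally F-regular type. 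Frobenius splitting or global F-regularity of affine Schubert varieties is never used in the paper and, by itself, does not reach the moduli space along your route.

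The missing idea is the auxiliary-weight Fano construction. Since the given weights are arbitrary, the moduli space one wants is not Fano in general; the paper instead adds an auxiliary reduced divisor $D'$ (disjoint from $D$, full flags, weights chosen with $K=2\chi_G$ and $a_{i+1}(x)-a_i(x)=m_i(x)$) so that the enlarged moduli space $Z=(\mathcal{R}')^{ss}//SL(V)$ is Fano with rational singularities (Propositions \ref{Fano1}, \ref{Fano2}), hence of globally F-regular type by Smith's theorem; the infinite Grassmannian enters only to prove injectivity of $\phi^*$ on Picard groups and thereby identify $\omega^{-1}$ with a theta line bundle, not as a globally F-regular ambient space. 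One then applies Proposition \ref{key prop} to the forgetful flag-bundle morphism $\hat f:\mathcal{R}'\rightarrow\mathcal{R}$, and the codimension-$\geq 2$ hypothesis is verified not on Schubert strata but on the finite-type parameter space $\mathcal{R}'$ via the relative Quot/reduction-scheme estimates of Section \ref{cod est} (Corollary \ref{codimension G}), with $\deg D'$ chosen large enough that $\frac{\deg D+\deg D'}{2\chi_G}+(r-1)(g-1)\geq 2$. Your proposal contains no analogue of this auxiliary Fano moduli space, and without it (or a valid substitute for the descent from the affine Grassmannian) the argument does not close.
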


We now describe how this paper is organized:

In Section \ref{basics}, we recall some basics about parabolic vector bundles, parabolic symplectic/orthogonal bundles and the equivalence between parabolic bundles and orbifold bundles.

In Section \ref{construction}, we construct the moduli space of semistable parabolic symplectic/orthogonal bundles explicitly, using Geometric Invariant Theory.

In Section \ref{cod est}, we develop a technique to estimate the codimension of unsemistable locus in a given family, not only for parabolic symplectic/orthogonal bundles, but also $G$ bundles and parabolic vector bundles.

In Section \ref{Inf Gr}, to evaluate the canonical line bundle on the moduli spaces we constructed in Section \ref{construction}, we introduce the infinite Grassmannians for parabolic $G$ bundles, here $G$ is a connected simply connected simple algebraic group; we also define the theta line bundles for any family of symplectic/orthogonal bundle then we can show that under certain choice of rank and weights, the moduli spaces we constructed in Section \ref{construction} are Fano varieties.

In Section \ref{section 6}, we recall definition and properties of globally F regular type varieties, with the help of key Proposition \ref{key prop}, we can prove our main theorem.

\quad

\noindent\textbf{Acknowledgements} We would like to thank our supervisor, Prof. Xiaotao Sun, who brought this problem to us and kindly answer our questions. The second author would like to thank Dr. Bin Wang and Dr. Xiaoyu Su, for helpful discussions.

\section{Basics of parabolic principal bundle over curve}\label{basics}

\subsection{Parabolic vector bundles and parabolic symplectic/orthogonal bundles}

Let $C$ be a smooth projective curve of genus $g\geq 0$. We fix a reduced effective divisor $D$ of $C$, and an integer $K>0$.

$E$ is a vector bundle of rank $r$ and degree $d$ over $C$, by a parabolic structure on $E$, we mean the following:

\begin{enumerate}

\item[(1)] At each $x\in D$, we have a choice of flag of $E_x$:$$0=F_{l_x}(E_x)\subseteq F_{l_{x}-1}(E_x)\subseteq \cdots \subseteq F_0(E_x)=E_x$$ Let $n_i(x)=\text{dim}F_{i-1}(E_x)/F_i(E_x)$ and $\overrightarrow{n}(x)=\big(n_1(x),n_2(x),\cdots,n_{l_x}(x)\big)$. Notice that all these filtrations together are equivalent to a filtration:$$E(-D)=F_l(E)\subseteq F_{l-1}(E)\subseteq \cdots \subseteq F_0(E)=E$$

\item[(2)] At each $x\in D$, we fix a choice of sequence of integers, which are called weights: $$0\leq a_1(x)<a_2(x)\cdots <a_{l_x}(x) <K$$ Put $\overrightarrow{a}(x)=\big(a_1(x),a_2(x),\cdots, a_{l_x}(x)\big)$.

\end{enumerate}

We say that $\big(E, D, K, \{\overrightarrow{n}(x)\}_{x\in D}, \{\overrightarrow{a}(x)\}_{x\in D}\big)$, or simply $E$, is a parabolic vector bundle, and $\sigma=\big(\{\overrightarrow{n}(x)\}_{x\in D}, \{\overrightarrow{a}(x)\}_{x\in D}\big)$ is the parabolic type of $E$.

For any subbundle $F$ of the vector bundle $E$, it is clearly that there is an induced parabolic structure on $F$, with induced flags structures and same weights; similarly there is an induced parabolic structure on $E/F$.

Let $E_1$ and $E_2$ be two parabolic vector bundle with same weights, the space of parabolic homomorphisms $\text{Hom}_{par}(E_1,E_2)$ given by $\mathcal{O}_C$-homomorphisms between $E_1$ and $E_2$ preserving filtrations at each $x\in D$. We can also define parabolic sheaf of parabolic homomorphisms $\mathcal{H}om_{par}(E_1,E_2)$ in a similar way, which inherits a parabolic structure naturally. In fact, in \cite{Yo95}, it is shown that the category of parabolic bundles is contained in an abelian category with enough injectives. So we have the derived functors of parabolic homomorphism. We use $\text{Ext}^1_{par}(E_1,E_2)$ to denote the space of parabolic extensions.

\begin{defn}

The parabolic degree of $E$ is defined by $$pardegE=degE+\dfrac{1}{K}\sum_{x\in D}\sum_{i=1}^{l_x}a_i(x)n_i(x)$$
and $E$ is said to be stable(resp. semistable) if for all nontrivial subbundle $F\subset E$, concerning the induced parabolic structure, we have: $$\dfrac{pardegF}{rankF}<\dfrac{pardegE}{rankE}\ \ \ \ \ \ (\text{resp.}\ \leq)$$

\end{defn}

Now let us talk about family of parabolic vector bundles. Let $S$ be a scheme of finite type, a family of parabolic vector bundle with type $\sigma$ over $C$ parametrized by $S$ is a vector bundle $\mathcal{E}$ over $S\times C$, together with filtrations of vector bundles on $\mathcal{E}_x$ of type $\overrightarrow{n}(x)$ and weights $\overrightarrow{a}(x)$ for each $x\in D$. As before, such filtrations are equivalent to the following: $$\mathcal{E}\big(-(S\times D)\big)=F_l(\mathcal{E})\subseteq F_{l-1}(\mathcal{E})\subseteq \cdots \subseteq F_0(\mathcal{E})=\mathcal{E}$$ where $S\times D$ is considered as an effective divisor of $S\times C$. Following \cite{Yo95}, we say $\mathcal{E}$ is a flat family if all $F_i(\mathcal{E})$ are flat families.

\begin{defn}
$E$ is a vector bundle of rank $r$ degree $d$ over $C$. By a symplectic/orthogonal parabolic structure on $E$, we mean the following:

\begin{enumerate}

\item[(1)] A non-degenerated anti-symmetric/symmetric two form $$\omega: E\otimes E\longrightarrow \mathcal{O}_C(-D)$$

\item[(2)] At each $x\in D$, a choice of flag: $$0=F_{2l_x+1}(E_x)\subseteq F_{2l_x}(E_x)\subseteq \cdots F_{l_x+1}(E_x)\subseteq F_{l_x}(E_x)\subseteq \cdots \subseteq F_0(E_x)=E_x$$ where $F_i(E_x)$ are isotropic subspaces of $E_x$ respect to the form $\omega$ and $F_{2l_x+1-i}(E_x)=F_i(E_x)^{\perp}$ for $l_x+1\leq i \leq 2l_x+1$.

\item[(3)] At each $x\in D$, we fix a choice of weights: $$0\leq a_1(x)<a_2(x)\cdots <a_{l_x}(x)<a_{l_x+1}(x) <\cdots <a_{2l_x+1}(x) \leq K$$
satisfying $a_{i}(x)+a_{2l_x+2-i}(x)=K$, $1\leq i \leq l_x+1$.

\end{enumerate}

As before, we put $n_i(x)=\text{dim}\big(F_{i-1}(E_x)/F_i(E_x)\big)$ , and
\begin{align*}
\overrightarrow{n}(x)&=\big(n_1(x),n_2(x),\cdots,n_{2l_x+1}(x)\big)\\
\overrightarrow{a}(x)&=\big(a_1(x),a_2(x),\cdots,a_{2l_x+1}(x)\big)
\end{align*}
We say that $\big(E, \omega, D, K, \{\overrightarrow{n}(x)\}_{x\in D}, \{\overrightarrow{a}(x)\}_{x\in D}\big)$, or simply $E$, is a parabolic symplectic/orthogonal bundle and $\sigma=\big(\{\overrightarrow{n}(x)\}_{x\in D}, \{\overrightarrow{a}(x)\}_{x\in D}\big)$ is the parabolic type of $E$.
\end{defn}

\textbf{Convention:} when talked about parabolic symplectic/orthogonal bundles, we always assume that deg$D$ is even, and we fix a line bundle $L$ over $C$ and an isomorphism $L^{\otimes 2}\cong \mathcal{O}_C(D)$.

\begin{rem}
\
\begin{enumerate}

\item The original definition of parabolic principal bundles is just a principal bundle together with additional structures \cite{R96}. Later in \cite{BBN01} Balaji, Biswas and Nagaraj establish a different definition, which share some nice properties as in the case of parabolic vector bundles, for example, a parabolic symplectic/orthogonal bundle admits an Einstein–Hermitian
connection if and only if it is polystable(\cite{BML11}).

\item Although in our definition, $E$ is not a principal symplectic/orthogonal bundle, but $E\otimes L$ is, we call $E$ twisted orthogonal/symplectic bundle.

\end{enumerate}

\end{rem}

The parabolic degree of $E$ is given by
\begin{align*}
pardegE=degE+\dfrac{1}{K}\sum_{x\in D}\sum_{i=1}^{2l_x+1}a_i(x)n_i(x)
\end{align*}
By relations between $\overrightarrow{n}(x)$ and $\overrightarrow{a}(x)$ we see that $pardegE=degE+\frac{r}{2}degD$, noticing that $\omega: E\otimes E\rightarrow \mathcal{O}_X(-D)$ is non-degenerated, so $E\simeq E^{\vee}(D)$. Thus $degE+ \dfrac{r}{2}degD=0$ and then $pardegE=0$.

For any subbundle $F$ of $E$, we can define the parabolic degree of $F$ by $$pardegF=degF+\dfrac{1}{K}\sum_{x\in D}\sum_{i=1}^{2l_x+1}a_i(x)n_i^F(x)$$ where $n_i^F(x)=\text{dim}\big(F_{i-1}(E_x)\cap F_x/F_i(E_x)\cap F_x\big)$ .

\begin{defn}

A parabolic orthogonal/symplectic bundle $E$ is said to be stable(resp. semistable) if for all nontrivial isotropic subbundle $F\subset E$(by isotropic we mean $\omega(F\otimes F)=0$), we have $$pardegF<0 \ \ \ \ (\text{resp.}\ \leq)$$

\end{defn}

\begin{lem}\label{isotropic}

A parabolic symplectic/orthogonal bundle is semistable iff for any subbundle $F$, not necessarily isotropic, we have $pardegF\leq 0$, i.e. semistable as a parabolic vector bundle.

\end{lem}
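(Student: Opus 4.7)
For the ``if'' direction, note that $pardegE = 0$, so semistability as a parabolic vector bundle demands $pardegF \leq 0$ for every saturated parabolic subbundle $F$ of $E$, which in particular covers the isotropic subbundles. All the content lies in the converse, which I would prove by reducing the non-isotropic case to the isotropic case via the following symmetry.

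The key intermediate identity is $pardegF = pardegF^{\perp}$ for every saturated subbundle $F \subseteq E$, where $F^{\perp}$ is the kernel of the composite $E \xrightarrow{\omega} E^{\vee}(-D) \twoheadrightarrow F^{\vee}(-D)$, equipped with the induced parabolic structure from $E$. The proof splits into two parts. From the short exact sequence $0 \to F^{\perp} \to E \to F^{\vee}(-D) \to 0$ and the identity $\deg E = -(r/2)\deg D$ (implicit in $pardegE = 0$ as worked out above), one obtains $\deg F^{\perp} - \deg F = (\operatorname{rank}(F) - r/2)\deg D$. On the weighted side, the self-dual flag condition $F_{2l_x+1-i}(E_x) = F_i(E_x)^{\perp}$ together with a straightforward inclusion--exclusion dimension count on $E_x$ gives $n_i^{F^{\perp}}(x) = n_{2l_x+2-i}(x) - n_{2l_x+2-i}^{F}(x)$; pairing this with the weight symmetry $a_i(x) + a_{2l_x+2-i}(x) = K$ produces a parabolic weight correction that cancels the degree shift exactly, yielding $pardegF = pardegF^{\perp}$.

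Granted the symmetry, suppose for contradiction that some saturated subbundle $F \subseteq E$ satisfies $pardegF > 0$. Set $F^{0} := F \cap F^{\perp}$, saturated inside $E$; by construction $\omega$ vanishes on $F^{0} \otimes F^{0}$, so $F^{0}$ is isotropic, and $(F^{0})^{\perp} = F + F^{\perp}$. Working in Yokogawa's abelian category of parabolic sheaves (or verifying additivity directly and noting that passing from the naive sheaf-theoretic sum to its saturation only increases parabolic degree), the short exact sequence $0 \to F^{0} \to F \oplus F^{\perp} \to F + F^{\perp} \to 0$ gives $pardegF^{0} + pardeg(F + F^{\perp}) \geq pardegF + pardegF^{\perp} = 2\,pardegF$. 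Applying the symmetry identity to $F^{0}$ yields $pardeg(F + F^{\perp}) = pardegF^{0}$, hence $2\,pardegF^{0} \geq 2\,pardegF$, i.e.\ $pardegF^{0} \geq pardegF > 0$. Since $F^{0}$ is isotropic, this contradicts the assumption.

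The main technical point I expect to spend most attention on is the symmetry identity $pardegF = pardegF^{\perp}$: keeping track of the flag-intersection jumps on $F^{\perp}$ and verifying that their pairing with the symmetric weights cancels the degree shift by precisely $(\operatorname{rank}(F) - r/2)\deg D$. Once that identity is in hand, the reduction from an arbitrary subbundle to its isotropic ``shadow'' $F \cap F^{\perp}$ is a short formal manipulation.
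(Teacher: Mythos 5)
Your proposal is correct and follows essentially the same route as the paper: reduce an arbitrary subbundle to the isotropic one $F\cap F^{\perp}$ via the exact sequence $0\to F\cap F^{\perp}\to F\oplus F^{\perp}\to F+F^{\perp}\to 0$ together with the symmetry $pardeg F=pardeg F^{\perp}$. You in fact supply the details the paper leaves implicit (the verification of $pardeg F=pardeg F^{\perp}$ and the comparison of $F+F^{\perp}$ with $(F\cap F^{\perp})^{\perp}$), and your weight computation and the direction of the resulting inequalities check out.
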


\begin{proof}
 If E is semistable as a parabolic vector bundle, then it is semistable as  parabolic symplectic/orthogonal bundle.

Conversely, if E is a semistable parabolic symplectic/orthogonal bundle and a subbundle F is given. We want to show that $pardeg(F)\leq 0 $.

If $F \cap F^{\perp}=0$, then $E=F\oplus F^{\perp} $. Hence $2pardeg(F)=pardeg(F)+pardeg(F^{\perp})=deg(E)=0$ and we are done.

If $F \cap F^{\perp}\neq 0$, then we have the exact sequece of parabolic bundles:
\[0 \rightarrow F \cap F^{\perp} \rightarrow F\oplus F^{\perp} \rightarrow F+F^{\perp} \rightarrow 0\]
This shows that \[pardeg(F \cap F^{\perp})+pardeg(F + F^{\perp})=pardeg(F)+pardeg(F^{\perp})=2pardeg(F)\]

 It is easy to see $pardeg(F \cap F^{\perp}) \geq pardeg(F + F^{\perp})$ and hence we have $2pardeg(F) \leq 2pardeg(F \cap F^{\perp}) \leq 0$, where $pardeg(F \cap F^{\perp}) \leq 0$ since $F \cap F^{\perp}$ is isotropic.
\end{proof}

\subsection{Equivalence between parabolic bundles and orbifold bundles}
There is an interesting and useful correspondence between parabolic bundles and orbifold bundles, which is developed in \cite{MS80}, and \cite{B97} for general case. We will recall the correspondence briefly as follows:

Given $C, D, K$ as before, By Kawamata covering, there is a smooth projective curve $Y$ and a morphism $p: Y\rightarrow C$ such that $p$ is only ramified over $D$ with $p^*D=K\sum_{x\in D}p^{-1}(x)$, moreover, if we put $\Gamma=\text{Gal}\big(Rat(Y)/Rat(C)\big)$ to be the Galois group, then $p$ is exactly the quotient map of $Y$ by $\Gamma$.

\begin{defn}

An orbifold bundle over $Y$ is a vector bundle $W$ over $Y$ such that the action of $\Gamma$ lifts to $W$.

And an orbifold symplectic/orthogonal bundle is an orbifold bundle such that the correspondence 2-form $\omega$ is a morphism of orbifold bundles.

\end{defn}

Given an orbifold bundle $W$, for any $y=p^{-1}(x)\in p^*D$, the stabilizer $\Gamma_y$, which is a cyclic group of order $K$, acts on the fiber $W_y$ by some representation(after choosing suitable basis):$$\xi_K\longmapsto \text{diag}\{\xi_K^{a_1(x)},\cdots,\xi_K^{a_1(x)},\xi_K^{a_2(x)},\cdots,\xi_K^{a_{l_x}(x)}\}$$ where $0\leq a_1(x)< a_2(x)\cdots a_{l_x}(x)< K$ are integers, $\xi_K$ is the $K$-th root of unity and the multiplicity of $\xi_K^{a_i(x)}$ is given by $n_i(x)$. Similar in the definition of parabolic bundle, we use $\sigma=\big(\{\overrightarrow{n}(x)\}_{x\in D}, \{\overrightarrow{a}(x)\}_{x\in D}\big)$ to denote the type of orbifold bundle $W$.

\begin{prop}[\cite{MS80},\cite{B97}]\label{equivalence between pa and or}

There is an equivalence between the category of orbifold bundles over $Y$ with type $\sigma$ and the category of parabolic vector bundles over $C$ with type $\sigma$.

\end{prop}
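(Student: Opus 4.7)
The plan is to exhibit functors $\Phi$ from orbifold bundles on $Y$ to parabolic bundles on $C$ and $\Psi$ in the reverse direction, both type-preserving, and to check they are mutually quasi-inverse. Since $p$ is étale away from $p^{-1}(D)$, all nontrivial content is concentrated at the ramification points; I will describe both functors globally and analyse them locally near a point $y\in p^{-1}(x)$, $x\in D$. Fix a uniformizer $s$ at $y$ so that $t:=s^K$ is a uniformizer at $x$ and the generator of $\Gamma_y$ acts by $s\mapsto \xi_K s$.

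\textbf{Construction of $\Phi$.} For an orbifold bundle $W$, set $\Phi(W):=(p_*W)^{\Gamma}$. The $\Gamma$-invariant pushforward from a smooth cover is locally free of rank $\operatorname{rk}(W)$. The $\Gamma_y$-action on the completed fiber $\widehat{W}_y$ splits it into character eigenspaces $\widehat{W}_y=\bigoplus_j V^{(j)}$, where the multiplicity of $\xi_K^{a_i(x)}$ equals $n_i(x)$. Passing to $\Gamma$-invariants in the pushforward and reading orders of vanishing in $s$, one recovers a canonical decreasing filtration of the fiber $\Phi(W)_x$ with jumps at precisely the integers $a_i(x)$ and graded dimensions $n_i(x)$, so $\Phi(W)$ carries a parabolic structure of type $\sigma$.

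\textbf{Construction of $\Psi$.} For a parabolic bundle $E$, define $\Psi(E)$ by gluing: on $p^{-1}(C\setminus D)$ set $\Psi(E):=p^*E$ with its tautological $\Gamma$-action; near $y$, use the filtration $F_\bullet(E_x)$ and weights $\overrightarrow{a}(x)$ to specify an $\mathcal{O}_{Y,y}$-lattice inside $p^*E\otimes_{\mathcal{O}_Y}\!K(Y)$ of the form $\bigoplus_i s^{-a_i(x)}\!\cdot p^*\bigl(F_{i-1}(E)/F_i(E)\bigr)_y$, equipped with the $\Gamma_y$-character $\xi_K^{a_i(x)}$ on the $i$-th summand (compatibly with $s\mapsto \xi_K s$). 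These local lattices glue with $p^*E$ on the étale locus to produce a $\Gamma$-equivariant locally free sheaf on $Y$ of type $\sigma$.

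\textbf{Verification and main obstacle.} Both compositions $\Phi\circ\Psi$ and $\Psi\circ\Phi$ are computed on formal neighbourhoods of a single $y$, where decomposing a $\Gamma_y$-module into characters and reassembling via prescribed characters is manifestly an involution; on morphisms, $\Gamma$-equivariant bundle maps push forward to filtration-preserving $\mathcal{O}_C$-maps and, conversely, parabolic maps lift uniquely through the local-to-global gluing, so the functors are fully faithful and each composition is naturally isomorphic to the identity. The delicate point is the local bookkeeping at ramification: matching filtration jumps with $\Gamma_y$-characters, and verifying that the $\mathcal{O}_{Y,y}$-module produced in $\Psi$ is actually free of the correct rank and glues compatibly with the étale locus. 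These computations are precisely what is carried out in \cite{MS80} and \cite{B97}, which establishes the asserted equivalence.
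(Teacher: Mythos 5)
Your proposal takes essentially the same route as the paper, which itself only sketches the correspondence — $\Phi(W)=(p_*W)^{\Gamma}$ with the parabolic structure read off from the stabilizer action at the ramification points, and $\Psi(E)$ obtained from $p^*E$ by elementary transformations (your local lattice modification) — and defers the detailed local bookkeeping to \cite{MS80} and \cite{B97}. Since your constructions and your citation of those references match the paper's treatment, the proposal is correct and no further comparison is needed.
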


Roughly speaking, given an orbifold bundle $W$, then $(p_*W)^{\Gamma}$ is a parabolic vector bundle over $C$, with parabolic structures given by the action of stablizers. Conversely, $E$ is a parabolic vector bundle, we put $W_1=p^*E$, after some elementary transformations of $W_1$, we would have an orbifold bundle of type $\sigma$. Moreover, we have $$\#\Gamma\cdot pardegE=degW$$ and E is (semi)stable as parabolic bundle if and only if W is (semi)stable as orbifold bundle.

 Now we will talk about orbifold symplectic/orthogonal bundles over $Y$: an orbifold symplectic/orthogonal bundles is symplectic/orthogonal bundle $W$ over $Y$ such that the action of  $\Gamma$ lifts to $W$ compatible with the symplectic/orthogonal structure. For any $y=p^{-1}(x)\in p*(D)$, the action of stabilizer is given by: $$\xi_K\longmapsto \text{diag}\{\xi_K^{a_1(x)},\cdots,\xi_K^{a_1(x)},\xi_K^{a_2(x)},\cdots,\xi_K^{a_{l_x}(x)},\xi_K^{-a_{l_x}(x)},\cdots, \xi_K^{-a_1(x)} \}$$ As before, we use $\sigma$ to denote the type of this orbifold symplectic bundle. Similarly,  we have:

\begin{prop}\label{symplectic orifold corr}

There is an equivalence between the category of orbifold symplectic/orthogonal bundles over $Y$ with type $\sigma$ and the category of parabolic symplectic/orthogonal bundles over $C$ with type $\sigma$. Moreover, this equivalence induces an equivalence between orbifold isotropic subbundles  and isotropic subbundles.

\end{prop}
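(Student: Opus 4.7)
My strategy is to promote Proposition~\ref{equivalence between pa and or} by showing that the extra datum of a non-degenerate (skew-)symmetric form corresponds bijectively under the equivalence. Write $\Phi(W)=(p_*W)^{\Gamma}$ for the functor from orbifold bundles to parabolic bundles, and $\Psi$ for its quasi-inverse built from pullback followed by elementary transformations.

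Given an orbifold symplectic/orthogonal pair $(W,\omega_W)$ with $\omega_W\colon W\otimes W\to \mathcal{O}_Y$ equivariant, one pushes forward along $p$ and takes $\Gamma$-invariants. Over $C\setminus D$ this immediately yields a non-degenerate form on $E=\Phi(W)$. At each $y\in p^{-1}(D)$ the stabilizer $\Gamma_y$ acts on $W_y$ by weights $\xi_K^{\pm a_i(x)}$, and equivariance forces $\omega_W$ to pair the weight-$a$ eigenspace with the weight-$(-a)$ eigenspace. Tracing the $\Gamma$-invariant local sections through the local model of $p$ (total ramification of order $K$) shows that the resulting pairing vanishes to first order along each $p^{-1}(x)$, so it descends as a map $\omega_E\colon E\otimes E\to \mathcal{O}_C(-D)$, and not merely to $\mathcal{O}_C$. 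The same weight analysis identifies the $\Gamma_y$-stable filtration of $W_y$ with the isotropic flag prescribed in the definition of a parabolic symplectic/orthogonal bundle, and the symmetry $a_i(x)+a_{2l_x+2-i}(x)=K$ is precisely the statement that weights $a$ and $-a\equiv K-a$ occur in symmetric positions of that filtration.

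For the reverse direction, given parabolic symplectic/orthogonal $(E,\omega_E)$ one applies $\Psi$. By Proposition~\ref{equivalence between pa and or} the morphism $\omega_E$ corresponds to a $\Gamma$-equivariant morphism $p^*E\otimes p^*E\to p^*\mathcal{O}_C(-D)=\mathcal{O}_Y(-p^*D)$, and after the elementary transformations producing $W$ from $p^*E$, the target cancels with the shift introduced by the transformations, leaving an equivariant non-degenerate $\omega_W\colon W\otimes W\to \mathcal{O}_Y$. The two constructions are mutually inverse by functoriality together with Proposition~\ref{equivalence between pa and or} on the underlying bundles. The ``moreover'' on isotropic subbundles then follows formally: $V\subset W$ is $\Gamma$-stable and isotropic iff $\Phi(V)\subset E$ is a parabolic subbundle with $\omega_E(\Phi(V),\Phi(V))=0$, because the isotropy conditions are matched factor-by-factor under the weight correspondence above.

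The main obstacle is the careful bookkeeping of weights and the target line bundle—verifying that $\mathcal{O}_C(-D)$ on $C$ and $\mathcal{O}_Y$ on $Y$ correspond under $\Phi,\Psi$ (i.e.\ the elementary transformations absorb the factor $\mathcal{O}_Y(-p^*D)$), and that the weight symmetry $a_i(x)+a_{2l_x+2-i}(x)=K$ is exactly equivalent to the equivariance of a non-degenerate form under the cyclic $\Gamma_y$-action. Once this local ramification calculation is made precise, the global statement and the isotropic-subbundle correspondence are formal consequences.
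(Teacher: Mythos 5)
The paper does not actually prove this proposition: its entire proof is the citation ``See \cite{BML11}'', so your self-contained argument is a genuinely different route. What you propose is essentially the standard way such a statement is established in the literature you are implicitly reproducing: take the equivalence $\Phi(W)=(p_*W)^{\Gamma}$, $\Psi=$ pullback plus elementary transformations from Proposition \ref{equivalence between pa and or}, and track the bilinear form through it by a local computation at the ramification points, where $\Gamma_y$-equivariance forces the form to pair the $\xi_K^{a}$-eigenspace with the $\xi_K^{-a}$-eigenspace, and pairing two invariant local sections $t^{a}g_1(t^K)e$, $t^{K-a}g_2(t^K)e'$ produces the factor $t^K=z$, i.e.\ the twist $\mathcal{O}_C(-D)$; the isotropic-subbundle statement then follows because the restricted pairings match generically under Galois descent. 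This is correct in outline and has the advantage over the paper's bare citation that it makes explicit exactly where the twist by $-D$ and the weight symmetry $a_i(x)+a_{2l_x+2-i}(x)=K$ enter; the cost is that the ``careful bookkeeping'' you defer is where all the content lies, and the cited reference is what actually carries it out.

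One point to handle explicitly if you write the bookkeeping out: your claim that the descended pairing ``vanishes to first order along each $p^{-1}(x)$'' is only correct for eigenspaces with weight strictly between $0$ and $K$. The paper's conventions allow $a_1(x)=0$ (hence $a_{2l_x+1}(x)=K$), in which case the orbifold bundle has a genuine weight-$0$ eigenspace (of dimension $n_1(x)+n_{2l_x+1}(x)$) whose invariant sections pair into $\mathcal{O}_C$ rather than $\mathcal{O}_C(-x)$, and the two parabolic graded pieces of weights $0$ and $K$ are not separated by the $\Gamma_y$-action alone; the same boundary case is what your ``target cancels with the shift introduced by the elementary transformations'' step must absorb in the reverse direction. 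So either restrict to $0<a_1(x)$ (equivalently all weights in the open interval $(0,K)$) or treat the weight-$0$/weight-$K$ blocks by an extra elementary transformation; as stated, the argument is complete only in the strict-weight case.
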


\begin{proof}

See \cite{BML11}.
\end{proof}

\section{Moduli space of semistable parabolic symplectic/orthogonal bundles}\label{construction}

In this section, we construct the moduli space of semistable parabolic symplectic/orthogonal bundles with fixed parabolic type $\sigma$ over $C$. Although the moduli space is already constructed in \cite{BR89} for general algebraic groups, but for our purpose, we will construct the moduli spaces explicitly using GIT constructions.

\subsection{Construction of the moduli space}
In this section we will use $E$ to denote a parabolic symplectic/orthogonal bundle of rank $r$, degree $d$ and parabolic type $\sigma$.

We will fix an ample line bundle $\mathcal O(1)$ on $C$ with degree $c$, then the Hilbert polynomial of E is $P_{E}(m)=crm+\chi(E).$

Firstly we notice that by Lemma 2.3 of \cite{TI03}, the class of semistable parabolic orthogonal/symplectic bundles with fixed rank, degree and parabolic type are bounded. So we may choose an integer $N_0$ large enough so that $E(N)$ is globally generated for all semistable parabolic bundle $E$ and all integers $N\geq N_0$ ; which means, we have a quotient
\[q:V\otimes  \mathcal O_X(-N) \twoheadrightarrow E\]
where V is the vector space $\mathbb{C}^{P(N)}$ and $P$ is the Hilbert polynomial of E.

Let $Q$  be the Quot scheme of quotients of $V\otimes  \mathcal O_X(-N)$ with Hilbert polynomial $P$.

The orthogonal/symplectic structure on $E$ will induce a morphism :
\[(V\otimes \mathcal{O}_C)\otimes (V\otimes \mathcal{O}_C) \longrightarrow E(N)\otimes E(N) \longrightarrow \mathcal O_C(2N-D)\]
which is equivalent to a bilinear map on $V$: $$\phi: V\otimes V\longrightarrow \text{H}^0(C, \mathcal{O}_C(2N-D))$$
here $\mathcal{O}_C(2N-D)=\mathcal{O}_C(2N)\otimes \mathcal{O}_C(-D)$ and we use $H$ to denote the space $\text{H}^0(C, \mathcal{O}_C(2N-D))$.

Now we can regard every semistable $E$ as a point in the space $Q\times \mathbb{P}Hom(V\otimes V,H).$ However, not every element in $\mathbb{P}Hom(V\otimes V,H)$ would give a nondegenerated form on $E$. To fix this, we will use the following lemma:

\begin{lem}[Lemma 3.1 of \cite{TI03}]

Let $X$ be a smooth projective variety and $Y$ be a scheme. Consider a morphism of sheaves $f: \mathcal{E}\rightarrow \mathcal{F}$ over $X\times Y$, moreover, we assume $\mathcal{F}$ is flat over $Y$. Then there is a unique closed subscheme $Z$ of $Y$ satisfying the following universal property: for any scheme $S$ and a Cartesian diagram:

\begin{equation*}
\xymatrix{
& X\times S \ar[r]^{\bar{h}} \ar[d]^{p_S} & X\times Y\ar[d]^{p} \\
& S \ar[r]^{h} & Y}
\end{equation*}
then $\bar{f}^*(f)=0$ if and only if $h$ factors through $Z$.

\end{lem}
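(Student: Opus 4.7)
The plan is to reduce the problem to a question about module maps via pushforward from $X\times Y$ to $Y$, using the flatness of $\mathcal F$ to control base change. Uniqueness of $Z$ is forced by the universal property, and the construction is local on $Y$, so I assume $Y=\mathrm{Spec}(A)$ is affine. Fix an ample $\mathcal O_X(1)$ and let $p:X\times Y\to Y$ be the projection. By Serre vanishing together with flatness of $\mathcal F$ and cohomology and base change, for $n\gg 0$ the $A$-module $N:=p_*\mathcal F(n)$ is finitely generated and locally free, the higher direct images vanish, and the counit $p^*N\twoheadrightarrow\mathcal F(n)$ is surjective; taking $n$ larger if necessary, the counit $p^*M\twoheadrightarrow\mathcal E(n)$ is also surjective, where $M:=p_*\mathcal E(n)$.

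Applying $p_*$ to $f(n):\mathcal E(n)\to\mathcal F(n)$ yields an $A$-module morphism $\phi:M\to N$. Because $N$ is locally free, the image of
\begin{equation*}
M\otimes_A N^{\vee}\xrightarrow{\phi\otimes\mathrm{id}}N\otimes_A N^{\vee}\xrightarrow{\mathrm{ev}}A
\end{equation*}
defines an intrinsic ideal $I\subseteq A$; locally, after trivializing $N\cong A^{r}$, it is generated by the component functions of all $\phi(m)$. By construction $I$ is the smallest ideal $J$ for which $\phi\otimes A/J=0$. Define $Z:=V(I)\subseteq Y$.

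To verify the universal property for $h:S\to Y$ with base change $\bar h$: flatness of $\mathcal F$ gives the base-change identification $p_{S*}\bar h^{*}\mathcal F(n)\cong h^{*}N$. If $h$ factors through $Z$, then $h^{*}\phi=0$; pulling back along $\bar h$ the commutative square relating $f(n)$ to the counit maps, the fact that $\bar h^{*}$ preserves the surjectivity of $p^{*}M\twoheadrightarrow\mathcal E(n)$ forces $\bar h^{*}f(n)=0$, and hence $\bar h^{*}f=0$. Conversely, if $\bar h^{*}f=0$, then $p_{S*}\bar h^{*}f(n)=0$, and composition with the natural map $h^{*}M\to p_{S*}\bar h^{*}\mathcal E(n)$ shows $h^{*}\phi=0$, which by construction of $I$ means that $h$ factors through $V(I)=Z$.

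The main obstacle is the asymmetry between source and target: $\mathcal F$ is flat so base change behaves perfectly for it, but $\mathcal E$ need not be, and only the natural comparison map $h^{*}p_{*}\mathcal E(n)\to p_{S*}\bar h^{*}\mathcal E(n)$ (in general neither injective nor surjective) is available. The workaround is to avoid a base change statement for $\mathcal E$ altogether: right-exactness of $\bar h^{*}$ preserves surjectivity of the counit on $\mathcal E(n)$, and this is what actually drives the diagram chase above. Independence of $Z$ from the auxiliary choice of $n$ is then automatic from the established universal property.
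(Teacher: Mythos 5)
Your proof is correct and is essentially the standard argument behind the cited Lemma 3.1 of \cite{TI03} (the paper itself gives no proof, only the citation): twist by $\mathcal{O}_X(n)$, use flatness of $\mathcal{F}$ plus cohomology and base change to make $p_*\mathcal{F}(n)$ locally free and compatible with arbitrary base change, keep only right-exactness for $\mathcal{E}$, and cut out $Z$ by the ideal sheaf generated by the image of $p_*\mathcal{E}(n)\otimes \big(p_*\mathcal{F}(n)\big)^{\vee}\rightarrow \mathcal{O}_Y$. The one hypothesis you use implicitly and should state is that $Y$ is noetherian (of finite type, as in this paper), so that a single $n$ works uniformly over all of $Y$ for Serre vanishing, fiberwise vanishing of higher cohomology, and relative global generation.
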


Now we let $Z\subset Q\times \mathbb{P}Hom(V\otimes V,H)$ be the closed subscheme such that every closed point ($q:V\otimes  \mathcal O_X(-N) \twoheadrightarrow E,\phi:V\otimes V\rightarrow  H$) of $Z$ represents a twisted symplectic/orthogonal bundle $E$.

So over $Z\times C$, we have a universal quotient $q:V\otimes p_C^*\mathcal{O}_C(-N)\rightarrow \mathcal{E}\rightarrow 0$ on $X\times Z$ and a nondegenerated anti-symmetric/symmetric two form $\omega: \mathcal{E}\otimes \mathcal{E}\rightarrow p_C^*\mathcal{O}_C(-D)$ where $p_C: Z\times C\rightarrow C$ is the projection. For any $x\in D$, let $\mathcal E_x$ be the restriction of $\mathcal E$ on $Z\times \{x\}\cong Z$ and we put $Flag_{\vec n(x)}(\mathcal E_x)\rightarrow Z$ be the relative isotropic flag scheme of  type $\vec n(x)$.

Let $\mathcal R:= \underset{x\in D}{\times_Z}Flag_{\vec n(x)}(\mathcal E_x)\rightarrow Z,$ then a closed point of $\mathcal R$ is represented by $$((q,\phi),(q_1(x),q_2(x),\dots,q_{2l_x}(x))_{x\in D}) $$ where $(q,\phi)$ is a point of Z,  and $q_i(x)$ is the composition $q_i(x):V \otimes \mathcal O_X(-N) \rightarrow E \rightarrow E_x\twoheadrightarrow Q_i(x)$. We denote by $Q_i(x)$ the quotients ${E_x}/{F_i(E)_x}$, and let $r_i(x)=dimQ_i(x).$

For $m$ large enough, let $\mathcal{G}=Grass_{P(m)}(V\otimes W_m)\times \mathbb{P}Hom(V\otimes V,H)\times \textbf{Flag}$, where, $W_m= H^0(V\otimes  \mathcal{O}(m-N))$, and $\textbf{Flag}$ is defined as:
\[\textbf{Flag}=\prod_{x\in D}\big(Grass_{r_1(x)}(V) \times \dots \times Grass_{r_{2l_x}(x)}(V)\big)\]

Now, consider the $SL(V)-$equivariant embedding
\[\mathcal R \hookrightarrow \mathcal G=Grass_{P(m)}(V\otimes W_m)\times \mathbb P Hom(V\otimes V,H)\times \textbf{Flag}\]
Which maps the point
$((q,\phi),(q_1(x),q_2(x),\dots,q_{2l_x}(x))_{x\in D})$ of $\mathcal R$ to the point
$$(g,\phi,(g_1(x),g_2(x),\dots,g_{2l_x}(x))_{x\in D})$$ of $\mathcal G$, where $g: V\otimes W_m\twoheadrightarrow H^0(E(m-N))$ and $ g_i(x): V\twoheadrightarrow Q_i(x)$.

We give the polarisation on $\mathcal G$ by:
\[n_1 \times 1 \times \prod_{x\in D} \prod_{i=1}^{ 2l_x}d_i(x)\]
Where $n_1=\mathnormal{\frac{l+KcN}{c(m-N)}} ,  d_i(x)=a_{i+1}(x)-a_i(x)$ and $\mathnormal l$ is the number satisfying
\[ \mathnormal{\sum_{x\in D}\sum_{i=1}^{2l_x}d_i(x)r_i(x)+rl=K\chi}\]

We will analyse the action of $SL(V)$ on $\mathcal{R}$ using a method in \cite{TI03}. Let $\mathcal{R}^{s}$(resp. $\mathcal{R}^{ss}$) to denote the sublocus of $\mathcal{R}$ where the corresponding parabolic symplectic/orthogonal bundles are stable(resp. semistable) and the map $\text{H}^0(q):V\rightarrow \text{H}^0(C, E(m))$ is an isomorphism. We are going to show $\mathcal{R}^{s}$(respectively, $\mathcal{R}^{ss}$) is the stable (respectively, semistable) locus of the action in the sense of GIT.  Firstly let us recall a definition in \cite{TI03}: 

\begin{defn} A weighted filtration $(E_{\textbf{•}}, m_{\textbf{•}})$ of a parabolic symplectic/orthogonal bundle $E$ consists of

\begin{enumerate}
\item[(1)] a filtration of subsheaves
\[0\subset E_1 \subset E_2 \subset \dots \subset E_t \subset E_{t+1}=E \]
We denote $rk(E_i)$ by $s_i$;

\item[(2)] a sequence of positive numbers $m_1,m_2 \dots ,m_t$, called the weights of this filtraion.

\end{enumerate}
\end{defn}

Let $\Gamma=\sum_{i=1}^{t}m_i \Gamma^{s_i}\in \mathbb C^r$, where
\[\Gamma^k=( \overbrace{k-r,k-r,\dots ,k-r}^{k},\overbrace{k\dots ,k}^{r-k})\]

Now, given a weighted filtration $(E_{\textbf{•}}, m_{\textbf{•}})$ of a parabolic symplectic/orthogonal bundle $E$, let $\Gamma_{j}$ be the $j$-th component of $\Gamma$, and we define \[\mu(\omega,E_{\textbf{•}}, m_{\textbf{•}}):=min \{ \Gamma_{s_{i_1}}+\Gamma_{s_{i_2}} : \omega \vert_{E_{i_1} \otimes E_{i_2}} \neq 0\} \]

We have the following result(see \cite{TI03}, Lemma 5.6):

\begin{lem}\label{nondegenerate}

 If $\omega$ is nondegenerate, then $\mu(\omega,E_{\textbf{•}}, m_{\textbf{•}})\leq 0$.

\end{lem}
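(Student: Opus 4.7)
Plan: The strategy is to reduce the lemma to a combinatorial identity about $\Gamma$ by exhibiting a particular family of pairs $(i_1, i_2)$ that must lie in the minimum-defining set, using only nondegeneracy through ranks.

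For each $i \in \{1, \ldots, t+1\}$, let $\phi(i)$ denote the smallest index $k$ with $s_k > r - s_i$. Since $\omega$ is nondegenerate, the orthogonal $E_i^{\perp}$ is a subbundle of rank $r - s_i$, and the strict inequality $s_{\phi(i)} > r - s_i$ forces $E_{\phi(i)} \not\subset E_i^{\perp}$ for rank reasons; equivalently $\omega|_{E_i \otimes E_{\phi(i)}} \neq 0$. Therefore the pair $(i, \phi(i))$ competes in the definition of $\mu$, and
\[
\mu(\omega, E_{\bullet}, m_{\bullet}) \;\leq\; \min_{1 \leq i \leq t+1} \bigl( \Gamma_{s_i} + \Gamma_{s_{\phi(i)}} \bigr).
\]

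It remains to check that this minimum is $\leq 0$. A direct calculation gives $\sum_{j=1}^{r} \Gamma^{k}_{j} = k(k-r) + (r-k)k = 0$ for each $k$, hence $\sum_{j=1}^{r} \Gamma_{j} = 0$. Moreover $\Gamma$ is non-decreasing and takes the constant value $\Gamma_{s_k}$ on the index interval $(s_{k-1}, s_k]$, so by the definition of $\phi$ we have $\Gamma_{s_{\phi(i)}} = \Gamma_{r - s_i + 1}$. Introducing $H(j) := \Gamma_j + \Gamma_{r - j + 1}$, the involution $j \leftrightarrow r-j+1$ gives $\sum_{j=1}^{r} H(j) = 2\sum_j \Gamma_j = 0$, so $\min_j H(j) \leq 0$.

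Finally I would argue that the minimum of $H$ over $\{1, \ldots, r\}$ is attained at one of the indices $j = s_i$. On a constancy interval $j \in (s_{i-1}, s_i]$, $\Gamma_j$ is constant while $\Gamma_{r - j + 1}$ is non-increasing in $j$ (as $\Gamma$ is non-decreasing and $r-j+1$ decreases), so $H$ is non-increasing on this interval and achieves its minimum at the right endpoint $j = s_i$. Consequently
\[
\min_{1 \leq i \leq t+1} \bigl( \Gamma_{s_i} + \Gamma_{s_{\phi(i)}} \bigr) \;=\; \min_{1 \leq i \leq t+1} H(s_i) \;=\; \min_{1 \leq j \leq r} H(j) \;\leq\; 0,
\]
which proves the lemma. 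The only nontrivial step is spotting the involution $j \leftrightarrow r - j + 1$ that converts $\Gamma$'s zero-sum property into the desired bound; the nondegeneracy of $\omega$ enters only through the rank inequality that validates the pairs $(i, \phi(i))$, so the argument is uniform in the symplectic and orthogonal cases.
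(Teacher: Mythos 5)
Your proof is correct, and it takes a genuinely different route from the paper's. The paper restricts $\omega$ to the fibre $W=E_x$ at a general point $x$, quotes the Hilbert--Mumford criterion to say that a nondegenerate form is a GIT-semistable point of $\mathbb{P}\mathrm{Hom}(W\otimes W,\mathbb{C})$ for the $SL(W)$-action, and then bounds $\mu(\omega,E_{\bullet},m_{\bullet})$ by the corresponding weight for the induced filtration of $W$. You instead work purely combinatorially at the sheaf level: nondegeneracy enters only through $\mathrm{rk}\,E_i^{\perp}=r-s_i$, which forces $\omega|_{E_i\otimes E_{\phi(i)}}\neq 0$ for your pairs $(i,\phi(i))$, and the rest is the zero-sum property of $\Gamma$ combined with the involution $j\leftrightarrow r-j+1$ and the fact that $\Gamma$ is non-decreasing and constant on each interval $(s_{k-1},s_k]$. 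The details check out: $\phi(i)$ exists and is at most $t+1$ because $s_{t+1}=r>r-s_i$; minimality of $\phi(i)$ places $r-s_i+1$ in $(s_{\phi(i)-1},s_{\phi(i)}]$, giving $\Gamma_{s_{\phi(i)}}=\Gamma_{r-s_i+1}$; and pairs involving the index $t+1$ are admissible in the definition of $\mu$ (they must be, or the minimum could be over an empty set, e.g.\ for a single isotropic step). What each approach buys: the paper's argument is short once one accepts the known GIT-semistability of nondegenerate symmetric/antisymmetric forms, whereas yours is self-contained and elementary---it in effect reproves that semistability statement---and it avoids the passage to a general point of $C$ altogether, so it applies verbatim in both the symplectic and orthogonal cases.
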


\begin{proof}
we can take the index i and j such that $\mu(\omega,E_{\textbf{•}}, m_{\textbf{•}})= \Gamma_{s_i}+\Gamma_{s_j}$ and $\omega \vert_{E_i \otimes E_j} \neq 0$. Then there exist a point $x\in C$ away from $D$ such that the restriction $\omega_x = \omega \vert_{E_{i,x} \otimes E_{j,x}} \neq 0$.

Let $W=E_x$. Then the nondegenerate form $\omega$ over $E$ induces a nondegenerate form over the vector space $W$. We still write this form as $\omega :W \otimes W \rightarrow \mathbb C.$  By the nondegenrate condition, using Hilbert-Mumford criterion, one can see that $\overline {\omega} \in \mathbb {P}Hom(W \otimes W, \mathbb C)$ is GIT semistable with the natural $SL(W)$ action. It implies that $\mu(\omega,W_{\textbf{•}}, m_{\textbf{•}})\leq 0$ for all weighted filtrations of W.

It's easy to see $\mu(\varphi,E_{\textbf{•}}, m_{\textbf{•}}) \leq \mu(\omega,W_{\textbf{•}}, m_{\textbf{•}}),$ hence $\mu(\omega,E_{\textbf{•}}, m_{\textbf{•}}) \leq 0 $.
\end{proof}

In the following we use Hilbert-Mumford criterion to determine the (semi)stable locus for the action of $SL(V)$ of $\mathcal{R}$.

\begin{prop}\label{Hilbert Mumford}

A point $((q,\phi),(q_1(x),q_2(x),\dots,q_{2l_x}(x))_{x\in D})$ of $\mathcal R$ is GIT stable (resp. GIT semistable) for the action of $SL(V)$, with respect to the polarisation defined in definition 2.1, if and only if for all weighted filtration $(E_{\textbf{•}}, m_{\textbf{•}})$, we have
\[kP(N)(\sum_{i=1}^{t}(pardeg(E_i))+\mu(\omega,E_{\textbf{•}}, m_{\textbf{•}})<0\ \ (\text{resp.} \leq)\]

\end{prop}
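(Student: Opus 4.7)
The plan is to apply the Hilbert--Mumford numerical criterion to the $SL(V)$-equivariant embedding $\mathcal R\hookrightarrow\mathcal G$ constructed above. Every $1$-parameter subgroup of $SL(V)$ is, after conjugation, determined by a weighted filtration $V_\bullet:0\subset V_1\subset\cdots\subset V_t\subset V$ with positive weights $m_\bullet$. For each such datum I would set $E_i=q\bigl(V_i\otimes \mathcal O_C(-N)\bigr)^{\mathrm{sat}}$ and work with the induced weighted filtration $(E_\bullet,m_\bullet)$ of $E$; a boundedness argument based on Lemma~2.3 of \cite{TI03} ensures that the weighted subsheaf filtrations of $E$ one has to test are exhausted in this way for $N,m\gg 0$, and conversely that for any such $(E_\bullet,m_\bullet)$ the corresponding $(V_i,m_i)$ with $V_i=H^0(E_i(N))$ gives a legitimate $1$-PS of $SL(V)$.

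Because the polarisation is of the product form $n_1\times 1\times \prod_{x,i}d_i(x)$, the Mumford weight splits as a sum of three contributions, one for each factor of $\mathcal G$, which I would compute separately. On $Grass_{P(m)}(V\otimes W_m)$, a standard Grassmannian computation gives a weight of the form $\sum_{i} m_i\bigl(\dim V_i\cdot P(m)-P(N)\cdot h^0(E_i(m-N))\bigr)$, which by asymptotic Riemann--Roch expands into a leading term in $m$ plus a constant proportional to $P(N)\cdot\deg(E_i)$. On $\mathbb P Hom(V\otimes V,H)$, the weight of the $1$-PS at $\phi$ equals (up to sign) $\mu(\omega,E_\bullet,m_\bullet)$ by the very definition of that invariant, and Lemma~\ref{nondegenerate} guarantees it is finite precisely through the nondegeneracy of $\omega$ enforced by $Z$. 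On each flag factor $Grass_{r_i(x)}(V)$ the weight is a linear combination of the dimensions $\dim q_i(x)(V_j)$, which, after multiplication by $d_i(x)=a_{i+1}(x)-a_i(x)$ and summation, telescopes into the parabolic weight contribution $\frac{1}{K}\sum_{x,i}a_i(x)n_i^{E_\bullet}(x)$ entering $\mathrm{pardeg}(E_\bullet)$.

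The key calculation is then to check that, after combining the three contributions with the chosen polarisation weights, the $m$-dependent leading terms cancel thanks to the choice $n_1=(l+KcN)/(c(m-N))$ together with the normalisation $\sum_{x,i}d_i(x)r_i(x)+rl=K\chi$: these numerical relations were designed precisely so that the surviving quantity is a positive multiple of
\[
kP(N)\Bigl(\sum_{i=1}^t\mathrm{pardeg}(E_i)\Bigr)+\mu(\omega,E_\bullet,m_\bullet).
\]
The Hilbert--Mumford criterion (weight $<0$, resp.\ $\le 0$) therefore translates verbatim into the stated inequality, giving both directions of the equivalence.

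The main obstacle is the asymptotic bookkeeping in the Grassmannian factor: one must verify that for $m\gg 0$ the evaluation maps $V_i\otimes W_m\to H^0(E_i(m-N))$ are surjective for every weighted subsheaf filtration coming from the bounded family, and that no residual $m$-dependence is left once the Grassmannian, flag, and form contributions are combined with the specific polarisation weights. The orthogonal/symplectic input beyond the $GL_n$ parabolic case of \cite{TI03} enters only through the term $\mu(\omega,E_\bullet,m_\bullet)$ and the isotropic flag condition defining $\mathcal R$; once the weighted sum above is identified, the rest of the argument parallels the vector bundle proof.
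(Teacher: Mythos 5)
Your proposal follows essentially the same route as the paper's proof: apply the Hilbert--Mumford criterion through the $SL(V)$-equivariant embedding into $\mathcal G$, identify one-parameter subgroups with weighted filtrations $(E_{\textbf{•}},m_{\textbf{•}})$, split the weight according to the product polarisation into Grassmannian, form, and flag contributions, and use the specific choices of $n_1$, $d_i(x)$ and $l$ to collapse the $m$-dependence and recover $KP(N)\sum_i m_i\,pardeg(E_i)+\mu(\omega,E_{\textbf{•}},m_{\textbf{•}})$ --- exactly the coefficient computation the paper carries out (citing Proposition 2.9 of \cite{Sun17}). The only cosmetic differences are that the cancellation is exact rather than asymptotic (since $n_1$ itself carries the factor $c(m-N)$ in its denominator) and that Lemma \ref{nondegenerate} is not actually needed at this stage, but neither affects the argument.
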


\begin{proof}

By the Hilbert-Mumford criterion, a point $((q,\phi),(q_1(x),q_2(x),\dots,q_{2l_x}(x))_{x\in D})$ is GIT semistable if and only if any one parameter subgroup $\lambda: \mathbb{G}_m \rightarrow SL(V)$, the corresponding Hilbert-Mumford weight is greater or equal than zero. But a one parameter subgroup of $SL(V)$ is equivalent to a weighted filtration of $V$ and hence gives a weight filtration $(E_{\textbf{•}}, m_{\textbf{•}})$ for the corresponding bundle $E$. In terms of weight filtration for $E$, we see that the Hilbert-Mumford weight is given by 

\[ s(E):= n_1(\sum_{i=1}^t m_i(\chi (E_i(N))P(m)-P(N) \chi(E_i(m))))+\mu (\omega, E_{\textbf{•}}, m_{\textbf{•}})\]

\[+\sum_{x\in D}\sum_{j=1}^{2l_x}d_j(x)(\sum_{i=1}^t m_i(\chi (E_i(N))r_j(x)-P(N)r_j^{E_i}(x))) \]
where $r_j^{E_i}(x):= dim(Im(E_i \rightarrow E \rightarrow Q_j(x) ))$. Hence the point is GIT semistable if and only if $s(E)\leq 0$.

However, one can show that (see Proposition 2.9 of \cite{Sun17})
\[s(E)=kP(N) \big(\sum_{i=1}^{t}m_i pardeg(E_i)\big)+\mu(\omega, E_{\textbf{•}}, m_{\textbf{•}} )\]
In fact, the coefficients of $m_i$ in $s(E)$ is
 $$n_1 \big (\chi (E_i(N))P(m)-P(N) \chi(E_i(m)) \big )+\sum_{x\in D}\sum_{j=1}^{2l_x}d_j(x)\big (\chi (E_i(N))r_j(x)-P(N)r_j^{E_i}(x)\big )$$
\begin{equation*} \begin{split}
&=(rl+rKcN)\big ( deg(E_i)-\dfrac {r(E_i)}{r}deg(E) \big )+\sum_{x\in D}\sum_{j=1}^{2l_x}d_j(x)\big (\chi (E_i(N))r_j(x)-P(N)r_j^{E_i}(x)\big ) \\
&=K P(N)\big ( deg(E_i)- \dfrac{ r(E_i)}{r}deg(E) \big )+ P(N)\big( \dfrac {r(E_i)}{r}\sum_{x\in D}\sum_{j=1}^{2l_x}d_j(x) r_j(x)-\sum_{x\in D}\sum_{j=1}^{2l_x}d_j(x) r_j^{E_i}(x) \big) \\
&=K P(N)\big ( deg(E_i)- \dfrac{ r(E_i)}{r}deg(E) \big )+\dfrac {r(E_i)}{r}P(N) \big (r \sum_{x \in D}a_{2l_x+1}-\sum_{x\in D}\sum_{j=1}^{2l_x+1}a_j(x)n_j(x) \big ) \\
&\quad -P(N)\big (r(E_i) \sum_{x \in D}a_{2l_x+1}-\sum_{x\in D}\sum_{j=1}^{2l_x+1}a_j(x)n_j^{E_i}(x) \big ) \\
&=KP(N)(pardeg(E_i))\\
\end{split} 
\end{equation*}
\end{proof}

\begin{prop}

A parabolic symplectic/orthogonal bundle $E$ is stable(resp. semistable) if and only if the correspondence point $((q,\phi),(q_1(x),q_2(x),\dots,q_{2l_x}(x))_{x\in D})$ of $\mathcal R$ is GIT stable(resp. semistable) for the action of $SL(V)$.

\end{prop}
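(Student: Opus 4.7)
The plan is to invoke Proposition \ref{Hilbert Mumford}, which identifies GIT (semi)stability of the orbit of $((q,\phi),(q_i(x))_{x\in D})$ with the inequality
\[
KP(N)\sum_{i=1}^{t} m_i\, pardeg(E_i) + \mu(\omega, E_\bullet, m_\bullet) < 0 \quad (\text{resp. } \leq 0)
\]
for every weighted filtration $(E_\bullet, m_\bullet)$ of $E$, and to match this against parabolic (semi)stability as a symplectic/orthogonal bundle. For the direction \emph{parabolic semistable $\Rightarrow$ GIT semistable}, Lemma \ref{isotropic} says parabolic sympl/orth semistability is equivalent to semistability as a parabolic vector bundle, so every subbundle $E_i \subset E$ satisfies $pardeg(E_i) \leq 0$; with positive weights this gives $\sum m_i\, pardeg(E_i) \leq 0$, and combined with $\mu \leq 0$ from Lemma \ref{nondegenerate} the Hilbert--Mumford weight is non-positive.

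For the reverse direction \emph{GIT semistable $\Rightarrow$ parabolic semistable}, I would argue by contradiction: suppose some nontrivial isotropic subbundle $F \subset E$ of rank $f \leq r/2$ has $pardeg(F) > 0$. Testing the one-step weighted filtration $0 \subset F \subset E$ with $m_1 = 1$, the isotropy $\omega|_{F\otimes F} = 0$ together with the nondegeneracy of $\omega$ forces the minimum defining $\mu$ to be attained by the pair $(F,E)$, yielding $\mu = \Gamma_f + \Gamma_r = 2f - r \leq 0$. The GIT inequality then gives $pardeg(F) \leq (r-2f)/(KP(N))$, and since $K\cdot pardeg(F) \in \mathbb{Z}$ while $P(N)$ may be taken arbitrarily large in the construction, this bound yields $pardeg(F) \leq 0$, contradicting the assumption.

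The stability case runs in parallel with strict inequalities. For \emph{parabolic stable $\Rightarrow$ GIT stable}, the weak estimate $\sum m_i\, pardeg(E_i) \leq 0$ combined with $\mu \leq 0$ still gives $\leq 0$; when equality is attained, the analysis of which $E_i$ achieves $pardeg(E_i)=0$ together with the $\omega$-pairing pattern forcing $\mu=0$ produces an isotropic subbundle of parabolic degree zero, contradicting strict stability. Conversely, a nontrivial isotropic $F$ with $pardeg(F) = 0$ is detected by the filtration $0 \subset F \subset E$ when $f = r/2$ (Lagrangian case, where $\mu = 0$ directly), or by the symmetric filtration $0 \subset F \subset F^\perp \subset E$ with equal weights when $f < r/2$, for which one computes $\mu = 0$ and uses the duality $pardeg(F) + pardeg(F^\perp) = 0$ to exhibit a non-strict Hilbert--Mumford weight.

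The main obstacle is the stability forward direction: when the combined Hilbert--Mumford weight vanishes, extracting from the constraints imposed by $\mu = 0$ an isotropic $E_i$ with $pardeg(E_i) = 0$ requires a careful bookkeeping of how $\omega$ restricts to intersections and perps inside the filtration---essentially symmetrizing by replacing each $E_i$ by $E_i \cap E_i^\perp$ and redistributing weights. This structural analysis, which follows the template of \cite{TI03} for non-parabolic symplectic/orthogonal bundles, is the technical core of the argument; the remaining passage to the parabolic setting only adds bookkeeping for the flag data at each $x\in D$, which is already absorbed into the formula for $pardeg(E_i)$ appearing in Proposition \ref{Hilbert Mumford}.
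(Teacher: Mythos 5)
Your semistable equivalence is essentially right, but your converse runs along a different route than the paper's. You test the one-step filtration $0\subset F\subset E$, get $\mu=2f-r$, and then need integrality of $K\cdot pardeg(F)$ plus largeness of $P(N)$ to kill the error term; be careful that $N$ is fixed once and for all in the GIT construction, so you cannot literally "take $P(N)$ arbitrarily large" — what saves the argument is that the fixed $N$ already satisfies $P(N)>r$. The paper avoids all of this by testing the symmetric filtration $0\subset F\subset F^{\perp}\subset E$ with weights $m_1=m_2=1$: there $\mu(\omega,E_{\bullet},m_{\bullet})=0$ and $pardeg(F)=pardeg(F^{\perp})$, so the Hilbert--Mumford inequality of Proposition \ref{Hilbert Mumford} gives $2\,pardeg(F)<0$ (resp. $\leq 0$) immediately, uniformly for the stable and semistable statements and with no appeal to integrality or to the size of $P(N)$. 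Note also that your claimed duality $pardeg(F)+pardeg(F^{\perp})=0$ is wrong for isotropic $F$: the correct relation is $pardeg(F^{\perp})=pardeg(F)$ (already for $E=L\oplus L^{-1}$ with the hyperbolic form one has $F^{\perp}=F=L$). It does no harm where you invoke it, since there $pardeg(F)=0$, but as a general statement it would make every isotropic subbundle a destabilizer.

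The genuine gap is the direction "parabolic stable $\Rightarrow$ GIT stable". You explicitly flag the equality case of the Hilbert--Mumford weight as the main obstacle and only sketch a symmetrization strategy (replacing each $E_i$ by $E_i\cap E_i^{\perp}$, following \cite{TI03}) without carrying it out, so as written the stable case is not proved. For comparison, the paper disposes of this direction in one line: for a stable bundle it asserts $pardeg(E_i)<0$ for every member of a weighted filtration, citing Lemma \ref{isotropic}, and concludes exactly as in the semistable case; it does not perform the equality analysis you anticipate. Your caution is not misplaced — Lemma \ref{isotropic} literally controls only non-strict inequalities for arbitrary (non-isotropic) subbundles, since stability constrains isotropic ones — but the net effect is that your proposal leaves that direction unfinished, whereas the paper's proof is simply the strict-inequality analogue of your semistable argument. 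If you want a complete argument in the spirit of your sketch, you must actually show that a vanishing Hilbert--Mumford weight forces an isotropic subbundle of parabolic degree zero; that is the content you have deferred to \cite{TI03}.
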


\begin{proof}
Let $E$ be a stable(resp. semistable) bundle. For any weighted filtration $(E_{\textbf{•}}, m_{\textbf{•}})$, we have $pardeg(E_i)<0\ (\text{ resp.}\ \leq)$ by Lemma \ref{isotropic}. Furthermore,by Lemma \ref{nondegenerate}, $\mu(\omega, E_{\textbf{•}}, m_{\textbf{•}})\leq 0$,hence
\[kP(N)(\sum_{i=1}^{t}(pardeg(E_i))+\mu(\omega,E.,m.)<0\ (\text{resp.}\ \leq)\]
By Proposition \ref{Hilbert Mumford}, this tells that the corresponding point $((q,\phi),(q_1(x),q_2(x),\dots,q_{2l_x}(x))_{x\in D})$ of $\mathcal R$ is GIT stable(resp. semistable).

Conversely, Let $E$ be a parabolic orthogonal/symplectic bundle such that the corresponding point $((q,\phi),(q_1(x),q_2(x),\dots,q_{2l_x}(x))_{x\in D})$ is GIT stable(resp. GIT semistable). We want to show that E is a stable (resp. semistable). That is, for any isotropic subbundle F of E, we have $pardeg(F)<0\ (\text{resp.}\ \leq)$.

Since $E$ is stable(resp. semistable), the inequality in Proposition \ref{Hilbert Mumford} must hold for all weighted filtrations $(E_{\textbf{•}}, m_{\textbf{•}})$. In particular, if we take the weighted filtration as:
$0 \subset F \subset F^{\perp} \subset E $, and weights $m_1=m_2=1$, then the inequality becomes
\[kP(N)((pardeg(F)+pardeg(F^{\perp}))+\mu(\omega, E_{\textbf{•}}, m_{\textbf{•}})<0\ (\text{ resp.}\ \leq).\]
However,  in this case we have $\mu(\omega,E.,m.)=0$ and $pardeg(F)=pardeg(F^{\perp})$, hence we have $pardeg(F)<0\ (\text{resp.}\ \leq).$
\end{proof}

Therefore,let $\mathcal{R}^{ss}\subset \mathcal R$ be the open set of $\mathcal R$ which consists of semistable parabolic orthogonal(symplectic,resp) sheaves. In the rest of this section, we will show that $\mathcal{R}^{ss}$ is smooth. Therefore let $ M_{G,P}=\mathcal R^{ss}//SL(V)$ be the GIT quotient, then we have

\begin{thm}

 $M_{G,P}$ is the coarse moduli space of semistable parabolic orthogonal/symplectic sheaves of rank $r$ and degree $d$ with fixed parabolic type $\sigma$. Moreover, $M_{G,P}$ is a normal Cohen-Macaulay projective variety, with only rational singularities.
 
\end{thm}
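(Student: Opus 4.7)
The argument has two components: the universal property, and the geometric regularity of $M_{G,P}$.

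For the coarse moduli statement, the previous proposition identifies the $SL(V)$-orbits of $\mathcal{R}^{ss}$ (modulo closure-equivalence in the strictly semistable case) with $S$-equivalence classes of semistable parabolic symplectic/orthogonal bundles of rank $r$, degree $d$, and type $\sigma$. Together with the universal quotient $\mathcal{E}$, its two-form, and the isotropic flags on $\mathcal{R}^{ss}\times C$, the standard GIT descent argument (as in \cite{TI03}, \cite{Sun17}) shows that any family over $S$ induces, after passing to an étale atlas trivializing the relevant frame bundle, an $SL(V)$-equivariant morphism $S'\to \mathcal{R}^{ss}$ and hence a canonical map $S\to M_{G,P}$; universality is then formal. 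Projectivity of $M_{G,P}$ is inherited from the projective ambient $\mathcal{G}$ via Mumford's construction.

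The regularity assertions reduce to smoothness of $\mathcal{R}^{ss}$. Since $\mathcal{R}\to Z$ is the relative isotropic flag bundle $\underset{x\in D}{\times_Z}\mathrm{Flag}_{\vec n(x)}(\mathcal{E}_x)$ with smooth fibers, it suffices to show $Z^{ss}$ is smooth. The Quot scheme $Q$ is smooth at $[q\colon V\otimes \mathcal{O}_C(-N)\twoheadrightarrow E]$ whenever $\mathrm{Ext}^1_{\mathcal{O}_C}(\ker q,E)=0$, which holds for $E$ semistable and $N\gg 0$ by boundedness, and $\mathbb{P}\mathrm{Hom}(V\otimes V,H)$ is visibly smooth. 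Smoothness of the closed subscheme $Z$ at $(q,\phi)$ is equivalent to the unobstructedness of deformations of the pair $(E,\omega)$ together with the surjection $q$. Since $C$ is a curve, this obstruction vanishes: the cleanest route is Proposition \ref{symplectic orifold corr}, which translates a parabolic symplectic/orthogonal bundle on $C$ into an orbifold symplectic/orthogonal bundle on the Kawamata cover $Y$, whose deformations are controlled by $H^i(Y,\mathfrak{g}_E)^\Gamma$. The obstruction lies in $H^2(Y,\mathfrak{g}_E)^\Gamma=0$ because $Y$ is a smooth curve, while surjectivity of $H^0(q(N))$ (again $N\gg 0$) handles compatibility with the rigidification by $V$.

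With smoothness of $\mathcal{R}^{ss}$ in hand, Boutot's theorem applies: the GIT quotient of a variety with rational singularities by a reductive group has rational singularities. Hence $M_{G,P}=\mathcal{R}^{ss}/\!/SL(V)$ has rational singularities and, in particular, is normal and Cohen--Macaulay. The main technical hurdle is verifying the unobstructedness at $(q,\phi)$: one must carefully check that the isotropic parabolic flag data on $C$ matches exactly the $\Gamma$-weight decomposition of the orbifold bundle on $Y$, so that $\Gamma$-invariants on $Y$ compute the parabolic deformation/obstruction groups on $C$, including the constraint that the form and the filtrations are preserved simultaneously.
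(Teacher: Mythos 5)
Your overall skeleton matches the paper's: reduce everything to smoothness of $\mathcal{R}^{ss}$ (via the flag bundle $\mathcal{R}\to Z$, so that only the smoothness of the locus $Z_F$ where $H^1(E(N))=0$ matters), and then push normality, rational singularities and Cohen--Macaulayness through the GIT quotient; the paper quotes the descent of normality/rational singularities plus Hochster--Roberts \cite{HR74} for Cohen--Macaulayness, while you get all three at once from Boutot's theorem, which is equivalent for this purpose. Where you genuinely diverge is the smoothness of $Z$ at a point $(q,\phi)$: the paper follows Ramanathan \cite{R96}, realizing $Z_F$ as an open subscheme of $\mathrm{Hom}\big(C,\mathcal{A}(Y)\big)$ with $Y=GL(r)//G$, passing to the Hilbert scheme of graphs, and killing the obstruction $H^1(C,f^*T_{\mathcal{A}(Y)})$ by the Atiyah sequence together with the vanishing $H^1\big(C,V\otimes E_f(N)\big)=0$; you instead argue abstractly that deformations of the pair $(E,\omega)$ on a curve are unobstructed and that the rigidification by $q$ adds no obstruction when $H^1(E(N))=0$. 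Your route is shorter and more conceptual, but two steps are asserted rather than proved and are exactly where the paper's concrete argument does the work: (i) the identification of the deformation functor of the scheme point of $Z$ (a quotient plus a form in $\mathbb{P}\mathrm{Hom}(V\otimes V,H)$ cut out by the universal-property lemma of \cite{TI03}) with the deformation functor of $(E,\omega,q)$, and (ii) the smoothness of the forgetful map from the rigidified functor to deformations of $(E,\omega)$, which is where $H^1(E(N))=0$ (not merely surjectivity of $H^0(q(N))$) enters. Also, your detour through the Kawamata cover and $H^2(Y,\mathfrak{g}_E)^{\Gamma}$ is misplaced at this stage: $Z$ carries no parabolic data at all (the flags enter only through the flag bundle $\mathcal{R}\to Z$, which you have already invoked), so the relevant vanishing is simply $H^2(C,\mathrm{ad})=0$ on $C$ itself; the orbifold correspondence is needed elsewhere in the paper (for the codimension estimates), not here. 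With (i) and (ii) spelled out, your argument is a valid alternative to the paper's; as written it leans on "standard facts" precisely at the point the paper proves in detail.
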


\begin{proof}
Since we have show that $\mathcal{R}^{ss}$ is smooth in the next subsection, especially $\mathcal{R}^{ss}$  is normal with only rational singularities, so is its GIT quotient $M_{G,P}$. Finally the fact that $\mathcal{R}^{ss}$ is regular implies that $M_{G,P}$ is Cohen-Macaulay(see \cite{HR74}).
\end{proof}

\subsection{Smoothness of $\mathcal{R}^{ss}$ }
The smoothness of $\mathcal{R}^{ss}$ has essentially proved in \cite{R96}. We will reformulate the proof here.

Let $Q_F$ be the open subscheme of $Q$ consisting of quotients $[q:V\otimes  \mathcal O_X(-N) \twoheadrightarrow E]\in Q$ such that $H^1(E(N))=0$. Let $Z_F$ be the inverse image of $Q_F$ under the projection $Z \rightarrow Q$ and $R_F$ be the inverse image of $Z_F$ under the projection $R \rightarrow Z$. If we can show that $Z_F$ is smooth, then $\mathcal{R}_F$ is smooth because is a flag bundle over $Z_F$. Thus $\mathcal{R}^{ss}$ is smooth as it is an open subscheme of  $Z_F$. So the smoothness of $\mathcal{R}^{ss}$ reduce to the smoothness of $Z_F$. We will prove the smoothness of $Z_F$ in the rest part of this subsection. First of all, let us recall the definition of Atiyah bundle of a principal $G$-bundle (\cite{B10})

\begin{defn}
	Let $p: E \rightarrow X$ be a principal $G$-bundle, the Atiyah bundle $At(E)$ of $E$ is defined as $At(E)(U):=H^0(p^{-1}U, T_{p^{-1}U})^G$ for any open subset $U \subseteq X$.
\end{defn}

\begin{prop}\label{Atiyah}
	Let $p: E \rightarrow X$ be a principal $G$-bundle and $At(E)$ is the Atiyah bundle, then 
	\begin{enumerate}
		\item[(1)] We have an exact sequence (the Atiyah sequence): $0 \rightarrow ad(E) \rightarrow At(E) \rightarrow T_X \rightarrow 0$.
		\item[(2)] There is a natural isomorphism $\mu: p^{*}At(E)\stackrel{\cong} {\longrightarrow} T_E$.
	\end{enumerate}
\end{prop}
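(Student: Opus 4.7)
The plan is to work entirely with the natural short exact sequence of tangent sheaves on the total space $E$ coming from the submersion $p$, then push forward along $p$ and take $G$-invariants. The key point is that $p\colon E\to X$ is a $G$-torsor, so $p_{*}$ is exact (as $p$ is affine) and, in characteristic zero with reductive $G$, the invariant functor $(-)^{G}$ is also exact, so the construction will preserve short exact sequences.

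For part (1), I would first observe that on $E$ we have the short exact sequence
\[
0\longrightarrow T_{E/X}\longrightarrow T_{E}\stackrel{dp}{\longrightarrow} p^{*}T_{X}\longrightarrow 0,
\]
and that the vertical tangent bundle $T_{E/X}$ is $G$-equivariantly trivialized as $E\times\mathfrak{g}$ via the differential at the identity of the action map $G\times E\to E$, with $G$ acting on $\mathfrak{g}$ through the adjoint representation. Applying $(p_{*}(-))^{G}$ to this sequence yields
\[
0\longrightarrow ad(E)\longrightarrow At(E)\longrightarrow T_{X}\longrightarrow 0,
\]
because $(p_{*}(E\times\mathfrak{g}))^{G}=E\times^{G}\mathfrak{g}=ad(E)$ by definition of the adjoint bundle, the middle term is $At(E)$ by the very definition given in the statement, and $(p_{*}p^{*}T_{X})^{G}=T_{X}\otimes (p_{*}\mathcal{O}_{E})^{G}=T_{X}$ by the projection formula together with $(p_{*}\mathcal{O}_{E})^{G}=\mathcal{O}_{X}$ for a $G$-torsor.

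For part (2), I would use the fact that for a $G$-torsor $p\colon E\to X$ the pair $\bigl(p^{*},\,(p_{*}(-))^{G}\bigr)$ gives an equivalence of categories between quasi-coherent sheaves on $X$ and $G$-equivariant quasi-coherent sheaves on $E$. The counit of this adjunction applied to $T_{E}$, endowed with its canonical $G$-equivariant structure coming from the $G$-action on $E$, produces a natural map $\mu\colon p^{*}At(E)\to T_{E}$. To verify that $\mu$ is an isomorphism it suffices to check this étale locally, so one may pass to a trivialization $E|_{U}\cong U\times G$; there $T_{E}$ decomposes as $p^{*}T_{U}\oplus \mathrm{pr}_{G}^{*}T_{G}$, the $G$-invariants of its pushforward read off as $T_{U}\oplus \underline{\mathfrak{g}}_{U}$, and pulling back recovers $T_{E}$ termwise.

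The only genuine subtlety is justifying that $(p_{*}(-))^{G}$ preserves the surjection on the right of the Atiyah sequence: this reduces to the vanishing of $H^{1}$ of the group action on sections of $T_{E/X}$ locally, which is immediate once one trivializes $E$ étale-locally over $X$ and uses that $G$ is connected reductive in characteristic zero. Everything else—the identification $T_{E/X}\cong E\times\mathfrak{g}$, the projection formula computation, and the local check in (2)—is formal once one is working in the étale topology on $X$.
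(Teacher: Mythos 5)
Your argument is correct. The paper itself does not prove this proposition --- it simply cites Section 1 of \cite{B10} --- and your proof (push the relative tangent sequence $0 \to T_{E/X} \to T_E \to p^{*}T_X \to 0$ forward along the torsor and take $G$-invariants, identifying $T_{E/X}$ equivariantly with $\mathcal{O}_E\otimes\mathfrak{g}$, the middle term $(p_{*}T_E)^G$ with $At(E)$ by the paper's definition, and $(p_{*}p^{*}T_X)^G$ with $T_X$ by the projection formula) is exactly the standard argument found in that reference, so there is no substantive divergence to report. One small streamlining: you do not need linear reductivity of $G$ in characteristic zero to justify right-exactness. Since $p$ is affine and $\bigl(p^{*},(p_{*}(-))^{G}\bigr)$ is an exact equivalence between quasi-coherent sheaves on $X$ and $G$-equivariant quasi-coherent sheaves on $E$ (faithfully flat descent along the torsor), exactness of the descended sequence is automatic; equivalently, the surjectivity onto $T_X$ can be checked after an \'etale-local trivialization of $E$, where the sequence splits. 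This removes the only hypothesis in your write-up that goes beyond the statement (which is for an arbitrary affine algebraic group $G$), and it is consistent with your correct choice of the \'etale topology in part (2): $GL$ and $Sp$ torsors are Zariski-locally trivial, but $SO$-torsors in general are only \'etale-locally trivial.
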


\begin{proof}
	See \cite{B10} section 1.
\end{proof}

\begin{rem}\label{grass}
	For the Grassmann variety $G_{n,r}$, let $p: \mathcal{A} \rightarrow G_{n,r} $ be the universal $GL(r)$ bundle and $0 \rightarrow \mathcal{K} \rightarrow V \otimes \mathcal{O}_{G_{n,r}} \rightarrow \mathcal{A} \rightarrow 0$ be the universal exact sequence, then we  have an isomorphism $At(\mathcal{A}) \cong V \otimes \mathcal{A}$ and the Atiyah sequence becomes $0 \rightarrow \mathcal{A}^{*} \otimes \mathcal{A} \rightarrow V \otimes \mathcal{A} \rightarrow \mathcal{K}^{*} \otimes \mathcal{A}  \rightarrow 0$.
\end{rem}

Let $G \hookrightarrow GL(r)$ be the orthogonal/symplectic subgroup of $GL(r)$. In \cite{R96}, the author has construct the moduli space of principal $G$-bundles. The author also shows that  $Q_F$ and $Z_F$ can be the open subschemes of some Hilbert scheme:

Consider the Grassmann variety $G_{n,r}$, where $n=dim V$. Denote the universal family over $G_{n,r}$ by $\mathcal{A} \rightarrow G_{n,r}$. Let $Y=GL(r)//G$ and $\mathcal{A}(Y)=\mathcal{A}//G$ be  the fibre bundle with fibre $Y$ over $G_{n,r}$ associated to $\mathcal{A} \rightarrow G_{n,r}$. Then we have the following proposition:

\begin{prop}\label{open}
	$Q_F$ is an open subscheme of $Hom(C,G_{n,r})$ and $Z_F$ is an open subscheme of $Hom\big(C,\mathcal{A}(Y)\big)$.
\end{prop}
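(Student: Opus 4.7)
The plan is to identify both sides via their universal properties and observe that $Q_F$ and $Z_F$ are cut out by open conditions inside the Hom schemes.

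For the first statement I would use that the Grassmannian $G_{n,r}$ represents the functor sending a scheme $T$ to rank-$r$ locally free quotients of $V\otimes\mathcal{O}_T$, so a $T$-point of $Hom(C,G_{n,r})$ is the same as a flat family of rank-$r$ locally free quotients $V\otimes\mathcal{O}_{C\times T}\twoheadrightarrow \mathcal{F}$. Twisting by $\mathcal{O}_{C\times T}(-N)$ turns this into a $T$-point of the locally free locus of $Q$, once the Hilbert polynomial is fixed to $P$; that Hilbert polynomial condition is locally constant in $T$ and so selects a union of connected components of $Hom(C,G_{n,r})$. Within this union, semicontinuity of cohomology makes $H^1(E(N))=0$ an open condition, and this is precisely the defining condition of $Q_F$ (the locally-free-of-rank-$r$ condition, which is open in $Q$, is automatic on the Grassmannian side). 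Together these open conditions identify $Q_F$ with an open subscheme of $Hom(C,G_{n,r})$.

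For the second statement I would use that a $T$-point of $Hom(C,\mathcal{A}(Y))$ consists of its projection $f_0:C\times T\to G_{n,r}$ together with a section of the pulled-back bundle $f_0^*\mathcal{A}(Y)\to C\times T$. Since $\mathcal{A}(Y)=\mathcal{A}\times^{GL(r)}Y$ with $Y=GL(r)//G$, such a section corresponds to a reduction of structure group of the principal $GL(r)$-bundle $f_0^*\mathcal{A}$ (the frame bundle of the pulled-back universal rank-$r$ quotient) to the subgroup $G$, equivalently a non-degenerate symmetric/antisymmetric form on the associated rank-$r$ vector bundle. Using our fixed isomorphism $L^{\otimes 2}\cong\mathcal{O}_C(D)$, twisting translates this into the non-degenerate form $\omega:E\otimes E\to\mathcal{O}_C(-D)$ in the definition of $Z$, and the same Hilbert-polynomial plus $H^1$-vanishing arguments from above then identify $Z_F$ with an open subscheme of $Hom(C,\mathcal{A}(Y))$.

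The main technical point, and where care will be needed, is the translation between the $\mathcal{O}_C$-valued bilinear forms produced naturally by the associated-bundle formalism and the $\mathcal{O}_C(-D)$-valued forms appearing in $Z$: this requires the auxiliary line bundle $L$ and a functorial check in $T$ that the bijection between sections of $\mathcal{A}(Y)$ and non-degenerate forms is algebraic, so that the identification is an actual scheme-theoretic open immersion rather than just a bijection on closed points. Once this compatibility is established, verifying that the remaining open conditions cut out $Q_F$ and $Z_F$ is routine.
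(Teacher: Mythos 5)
Your argument for the first statement is essentially the right one, and it is the argument the paper leaves implicit (its ``proof'' is only the citation to Ramanathan): the locally free locus of the Quot scheme is canonically an open-and-closed piece of $Hom(C,G_{n,r})$ of the appropriate degree, and $H^1(E(N))=0$ is open by semicontinuity. One small caveat you should make explicit: as literally defined in the paper, $Q_F$ imposes only $H^1(E(N))=0$ and not local freeness of $E$, so the identification really concerns the locally free part of $Q_F$; this is the paper's imprecision as much as yours, but your parenthetical remark should be upgraded to an actual hypothesis.

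For the second statement there is a genuine gap, located exactly at the point you flagged, and it is not repaired by ``twisting using $L^{\otimes 2}\cong\mathcal{O}_C(D)$'' alone. A section of $f_0^*\mathcal{A}(Y)$, with $\mathcal{A}(Y)=\mathcal{A}\times^{GL(r)}Y$ and $Y=GL(r)//G$, is a reduction of the frame bundle of $f_0^*\mathcal{A}\cong E(N)$ to $G$, i.e.\ a nondegenerate form $E(N)\otimes E(N)\to\mathcal{O}_C$. A point of $Z_F$, however, carries a form $E(N)\otimes E(N)\to\mathcal{O}_C(2N-D)$, equivalently $\omega:E\otimes E\to\mathcal{O}_C(-D)$. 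These are forms with values in different line bundles, differing by a twist by $\big(\mathcal{O}_C(N)\otimes L^{-1}\big)^{\otimes 2}$, and there is no canonical bijection between the two sets; worse, since $\deg E(N)=r\big(cN-\frac{1}{2}\deg D\big)\neq 0$ for the large $N$ used in the construction, the bundle $E(N)$ admits no nondegenerate $\mathcal{O}_C$-valued form at all, so the sublocus of $Hom\big(C,\mathcal{A}(Y)\big)$ you propose to compare with $Z_F$ would be empty. The repair is not a reinterpretation of the form but a change of target: one must use the $Y$-bundle associated to the frame bundle of the twisted bundle $\mathcal{A}\otimes p_C^{*}\big(L\otimes\mathcal{O}_C(-N)\big)$, which lives over $C\times G_{n,r}$ rather than over $G_{n,r}$, and identify $Z_F$ with an open subscheme of the corresponding relative section space over $C$; sections of that twisted associated bundle are precisely the $\mathcal{O}_C$-valued forms on $E\otimes L$, i.e.\ the forms $E\otimes E\to\mathcal{O}_C(-D)$ appearing in $Z$. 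With that modification your openness argument (locally constant Hilbert polynomial, semicontinuity for $H^1$, openness of nondegeneracy) does go through, and this is the sense in which Ramanathan's untwisted construction, cited by the paper, has to be adapted.
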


\begin{proof}
	See \cite{R96} section 4.13.
\end{proof}

\begin{prop}
The semistable locus $\mathcal{R}^{ss}$ is smooth.
\end{prop}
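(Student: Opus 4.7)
The plan is to combine the Hom-scheme realization of Proposition \ref{open} with standard deformation theory of morphisms from $C$ into a smooth target. Since $\mathcal{R}_F\to Z_F$ is a flag bundle (hence a smooth morphism), and since $\mathcal{R}^{ss}\subseteq \mathcal{R}_F$ once $N$ has been chosen large enough that every semistable $E$ satisfies $H^1(E(N))=0$ (which is possible by the boundedness Lemma 2.3 of \cite{TI03}), it is enough to prove that $Z_F$ is smooth at every point coming from a semistable bundle. By Proposition \ref{open}, $Z_F$ is an open subscheme of $\mathrm{Hom}(C,\mathcal{A}(Y))$, so the tangent and obstruction spaces at a point $\sigma\colon C\to \mathcal{A}(Y)$ are $H^0(C,\sigma^*T_{\mathcal{A}(Y)})$ and $H^1(C,\sigma^*T_{\mathcal{A}(Y)})$; smoothness thus reduces to the vanishing $H^1(C,\sigma^*T_{\mathcal{A}(Y)})=0$.

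To establish this vanishing, I would split $\sigma^*T_{\mathcal{A}(Y)}$ along the smooth projection $\pi\colon \mathcal{A}(Y)\to G_{n,r}$, whose fibres are $Y=GL(r)/G$. Setting $f=\pi\circ\sigma$ and pulling back the relative tangent sequence yields
\[0\to \sigma^*T_\pi \to \sigma^*T_{\mathcal{A}(Y)} \to f^*T_{G_{n,r}}\to 0.\]
For the quotient term, pulling back the Atiyah sequence of Remark \ref{grass} gives $0\to \mathcal{E}nd(E(N))\to V\otimes E(N)\to f^*T_{G_{n,r}}\to 0$, and since $H^1(V\otimes E(N))=V\otimes H^1(E(N))=0$ by the defining property of $Z_F$ while $H^2$ vanishes on a curve, the long exact sequence immediately forces $H^1(C,f^*T_{G_{n,r}})=0$. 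For the kernel, $\sigma$ corresponds to a reduction $P\to C$ of the frame bundle of $E(N)$ to $G$ (in the twisted sense built into the paper's convention on $\omega$), and the standard associated-bundle description identifies $\sigma^*T_\pi\cong P\times^G(\mathfrak{gl}(r)/\mathfrak{g})$; using $\omega$ to split $\mathcal{E}nd(E(N))$, this is the symmetric (orthogonal) or alternating (symplectic) summand of a high-degree twist of $E\otimes E$, whose $H^1$ vanishes uniformly on the bounded family of semistable bundles provided $N$ is initially chosen large enough. Combining the two vanishings yields $H^1(C,\sigma^*T_{\mathcal{A}(Y)})=0$ and hence the smoothness of $Z_F$ along $\mathcal{R}^{ss}$.

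The main obstacle is the precise identification of $\sigma^*T_\pi$ as an explicit bundle on $C$: one must carefully track the twist arising from the fact that $\omega\colon E\otimes E\to \mathcal{O}_C(-D)$ is not $\mathcal{O}_C$-valued, which is the very reason $E$ is called a \emph{twisted} orthogonal/symplectic bundle and why the reduction $P$ is a reduction of a twist of the frame bundle (involving the fixed line bundle $L$ with $L^{\otimes 2}=\mathcal{O}_C(D)$). Once the twists are tracked correctly, the two cohomological inputs — the defining vanishing $H^1(E(N))=0$ of $Z_F$ and a Serre-type vanishing on the symmetric/alternating square after a further enlargement of $N$ — are exactly what the construction at the start of the section was designed to supply.
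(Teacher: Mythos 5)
Your reduction agrees with the paper up to the key vanishing: passing to $Z_F$ via the flag bundle $\mathcal{R}_F\to Z_F$, viewing $Z_F$ inside $\mathrm{Hom}(C,\mathcal{A}(Y))$ by Proposition \ref{open}, reducing smoothness to $H^1(C,\sigma^*T_{\mathcal{A}(Y)})=0$, and your treatment of the base direction is fine (pulling back the sequence of Remark \ref{grass} and using $H^1(E(N))=0$ does give $H^1(C,f^*T_{G_{n,r}})=0$). The gap is in the fibre direction. The relative tangent bundle of $\pi\colon\mathcal{A}(Y)\to G_{n,r}$ along the fibre $GL(r)/G$ is the bundle associated to the $G$-module $\mathfrak{gl}(r)/\mathfrak{g}$, and it parametrizes deformations of the form with \emph{fixed} target; concretely $\sigma^*T_\pi\cong\mathcal{H}om\big(\mathrm{Sym}^2E(N)\ \text{or}\ \wedge^2E(N),\,\mathcal{O}_C(2N-D)\big)\cong \mathrm{Sym}^2(E\otimes L)\ \text{or}\ \wedge^2(E\otimes L)$. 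No power of $\mathcal{O}_C(N)$ survives: the twist of $E(N)$ is exactly compensated by the target $\mathcal{O}_C(2N-D)$, just as $\mathcal{E}nd(E(N))=\mathcal{E}nd(E)$. Since $\deg E=-\tfrac r2\deg D$, this bundle has degree $0$, so Riemann--Roch gives $h^1(\sigma^*T_\pi)\geq \mathrm{rk}\,(\sigma^*T_\pi)(g-1)$, which is strictly positive for every $g\geq 2$ no matter how large $N$ was chosen at the start. So the Serre-type vanishing you invoke for the kernel term is false, and the term-by-term strategy cannot close.

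What actually makes the statement true is not vanishing of $H^1(\sigma^*T_\pi)$ but surjectivity of the connecting map $H^0(C,f^*T_{G_{n,r}})\to H^1(C,\sigma^*T_\pi)$: deformations that move the quotient $V\otimes\mathcal{O}_C(-N)\to E$ absorb the obstructions in the fibre direction. The paper packages this in one step: differentiating the quotient $\theta\colon\mathcal{A}\to\mathcal{A}(Y)$ and using $T_{\mathcal{A}}\cong\theta^*p^*At(\mathcal{A})$ (Proposition \ref{Atiyah}) produces a surjection of sheaves $p^*At(\mathcal{A})\twoheadrightarrow T_{\mathcal{A}(Y)}$ on all of $\mathcal{A}(Y)$, whose pullback along $f$ is $V\otimes E_f(N)$ by Remark \ref{grass}; then $H^1(C,V\otimes E_f(N))=0$ (the defining condition of $Z_F$) together with the vanishing of $H^2$ on a curve forces $H^1(C,f^*T_{\mathcal{A}(Y)})=0$ directly, with no separate statement about $\sigma^*T_\pi$. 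If you wish to keep your two-step decomposition, you must replace the false vanishing by a proof that the connecting homomorphism covers $H^1(\sigma^*T_\pi)$, which in substance is the same surjection.
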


\begin{proof}
	As mentioned before, we just need to show that $Z_F$ is smooth. By proposition \ref{open}, $Z_F \subset Hom\big(C,\mathcal{A}(Y)\big) $. So let $f:C \rightarrow \mathcal{A}(Y)$ be a point of $Z_F$, We need to show that $Z_F$ is smooth at $f$, that is $Hom\big(C,\mathcal{A}(Y)\big)$ is smooth at $f$. However by associating $f$ to the graph  $\Gamma_f$, we may  consider $Hom\big(C,\mathcal{A}(Y)\big)$ as an open subscheme of $Hilb \big(C \times \mathcal{A}(Y)\big)$. Hence we should prove the that $Hilb \big(C \times \mathcal{A}(Y)\big)$ is smooth at $\Gamma_f$. By obstruction theory, this is equivalent to show that $H^1(\Gamma_f, N_{\Gamma_f})=0$ where $N_{\Gamma_f}$ is the normal bundle of $\Gamma_f$ in $C \times \mathcal{A}(Y)$. However, since $\Gamma_f \simeq C$ by the projection $C \times \mathcal{A}(Y) \rightarrow C$, we have $N_{\Gamma_f} \simeq f^{*}T_{\mathcal{A}(Y)}$ where $T_{\mathcal{A}(Y)}$ is the tangent bundle of $\mathcal{A}(Y)$. We will show that $H^1(C,f^{*}T_{\mathcal{A}(Y)})=0$.

	Let $p:\mathcal{A}(Y) \rightarrow G_{n,r}$ and $q:\mathcal{A} \rightarrow G_{n,r}$  be the caonical maps. Dnote by $ \theta : \mathcal{A} \rightarrow \mathcal{A}(Y)=\mathcal{A}//G $  the natural quotient.  It is easy to see $q=p \circ \theta$. Then we get the following exact sequence by taking the differential of the projection  $ \theta : \mathcal{A} \rightarrow \mathcal{A}(Y)$ 
	\begin{align*}
	0 \longrightarrow \mathcal{M} \longrightarrow \theta_*T_{\mathcal{A}} \longrightarrow T_{\mathcal{A}(Y)} \longrightarrow 0
	\end{align*}
	
	On the other hand, by proposition \ref{Atiyah}, we have 
	\begin{align*}
	T_{\mathcal{A}} \cong q^*At(\mathcal A) = (p \circ \theta)^*At(\mathcal A)=\theta^* p^*At(\mathcal A)
	\end{align*} 
	
	So we have a surjective map
	\begin{align*}
	\theta_* \theta^* p^*At(\mathcal A) \longrightarrow T_{\mathcal{A}(Y)}
	\end{align*}
	
	Compose the above map with the canonical map $ p^*At(\mathcal A) \longrightarrow \theta_* \theta^* p^*At(\mathcal A) $, we get a surjective morphism
	\begin{align*}
	p^*At(\mathcal A) \longrightarrow T_{\mathcal{A}(Y)}
	\end{align*}
	
	Assume the kernel is $\mathcal N$, then we have an exact sequence
	\begin{align*}
	0 \longrightarrow \mathcal{N} \longrightarrow  p^*At(\mathcal A) \longrightarrow T_{\mathcal{A}(Y)} \longrightarrow 0
	\end{align*}
	
	Taking the pullback functor $f^*$, we get an exact sequence over $C$
	\begin{align*}
	0 \longrightarrow f^*\mathcal{N} \longrightarrow  f^*p^*At(\mathcal A) \longrightarrow f^*T_{\mathcal{A}(Y)} \longrightarrow 0
	\end{align*}
	
	We get a exact sequence of cohomologies from the above sequence
	\begin{align}
	H^1(C,f^*\mathcal{N}) \longrightarrow H^1(C,f^*p^*At(\mathcal A)) \longrightarrow H^1(C,f^*T_{\mathcal{A}(Y)}) \longrightarrow 0
	\end{align}
	
	However, by Remark \ref{grass}, we have $At(\mathcal A)  \cong V \otimes \mathcal{A}$. So $$ f^*p^*At(\mathcal A) \cong (p \circ f )^* (V \otimes \mathcal{A})$$
	
	Notice that $p\circ f \in Z_F$, so it correspondence to a quotient bundle
	\begin{align*}
	0 \longrightarrow F_f \longrightarrow V \otimes \mathcal{O}_C(-N) \longrightarrow E_f \longrightarrow 0
	\end{align*}
	
	Then we have $(p \circ f )^* (V \otimes \mathcal{A}) \cong V \otimes E_f(N)$, so we have that 
	\begin{align}
	H^1\big(C,f^*p^*At(\mathcal A)\big) \cong H^1\big(C,V \otimes E_f(N)\big)=0 
	\end{align}
	
	Combine with (3.1) and (3.2) we finally get $H^1(C,f^*T_{\mathcal{A}(Y)})=0$.
	
\end{proof}

\section{Codimention estimate}\label{cod est}

In this section, we fix $S$ to be a scheme of finite type. Let $\mathcal{E}$ be a flat family of vector bundle, principal $G$ bundle, parabolic vector bundle or parabolic symplectic/orthogonal bundle over $C$ parametrized by $S$, under certain conditions, we want to estimate the codimension of the unstable (unsemistable) locus, i.e. the locally closed subscheme $S^{us}\subset S$ ($S^{uss}\subset S$) parametrizing all $\mathcal{E}_t$ which is not stable (semistable). Our main method is taken from \cite{PV85}.

\subsection{The case of vector bundle and principal $G$ bundle}

In fact, the case of vector bundle and principal $G$ bundle have been already done in \cite{PV85} and \cite{KN97}. For later use, we reformulate the results and give a short proof if necessary.

All the stories begin with the following proposition:

\begin{prop}\label{begin prop}

$\mathcal{E}$ is a flat family of vector bundles over $S\times C$. Let $\phi: Q\rightarrow S$ be the relative Quot-scheme parametrizing all flat quotients of $\mathcal{E}$ with certain fixed rank and degree. For any $s\in S$ and $q\in \phi^{-1}(s)$, corresponding to exact sequence: $$0\longrightarrow F\longrightarrow \mathcal{E}_s\longrightarrow G\longrightarrow 0$$ we have the following exact sequence:
\begin{align}\label{exact sequence}
0\longrightarrow \text{Hom}(F,G)\longrightarrow T_qQ\longrightarrow T_sS\longrightarrow \text{Ext}^1(F,G).
\end{align}

\end{prop}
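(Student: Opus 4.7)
The plan is to interpret the tangent spaces functorially as morphisms from $\text{Spec}(k[\epsilon])$ and to realize the sequence as the beginning of the standard deformation-obstruction sequence for the relative Quot functor $\phi: Q \to S$. By the universal property of the relative Quot scheme, a tangent vector $v \in T_q Q$ is the same data as a $\text{Spec}(k[\epsilon])$-point $w \in T_s S$ together with a flat quotient $\tilde{G}$ of $w^*\mathcal{E}$ on $C \times \text{Spec}(k[\epsilon])$ restricting to $\mathcal{E}_s \to G$ at the closed point. The map $T_q Q \to T_s S$ is then simply $v \mapsto w = d\phi_q(v)$.

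First I would treat the kernel. When $w = 0$, the deformation $w^*\mathcal{E}$ is the trivial deformation $\mathcal{E}_s \otimes k[\epsilon]$, and Grothendieck's classical computation of the tangent space to $\text{Quot}(\mathcal{E}_s)$ at $q$ identifies the set of such flat quotient lifts with $\text{Hom}(F, G)$ (via the graph construction $\tilde{F} = \{x + \epsilon f(x) : x \in F\} + \epsilon F$ for $f \in \text{Hom}(F, G)$, lifted to a map $F \to \mathcal{E}_s$). This produces the injection $\text{Hom}(F, G) \hookrightarrow T_q Q$ whose image is exactly $\ker d\phi_q$, taking care of exactness at the first two terms.

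Next I would construct the connecting map $T_s S \to \text{Ext}^1(F, G)$. Given $w \in T_s S$, the sheaf $w^*\mathcal{E}$ is a square-zero deformation of $\mathcal{E}_s$ sitting in an exact sequence
\begin{equation*}
0 \to \mathcal{E}_s \to w^*\mathcal{E} \to \mathcal{E}_s \to 0
\end{equation*}
whose class in $\text{Ext}^1(\mathcal{E}_s, \mathcal{E}_s)$ is the Kodaira--Spencer class of the deformation. Pulling back along $F \hookrightarrow \mathcal{E}_s$ and pushing out along $\mathcal{E}_s \twoheadrightarrow G$ produces a class in $\text{Ext}^1(F, G)$, and I would take this composition as the connecting map.

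The heart of the argument, and the main obstacle, is exactness at $T_s S$: that the vanishing of this class is equivalent to the existence of a flat lift $\tilde{G}$. I would prove this by the standard square-zero lifting argument: local flat lifts of the quotient always exist since the obstruction is locally cohomologically trivial, and the difference of two local lifts is measured by a \v{C}ech $0$-cochain with values in $\mathcal{H}om(F, G)$; the cocycle measuring the failure of local lifts to glue has cohomology class in $\text{Ext}^1(F, G)$ equal to the pullback-pushout class constructed above. Hence the class vanishes if and only if a global flat lift exists, which is exactly exactness at $T_s S$. Once this step is in place, the other exactness statements follow from the functorial description of tangent vectors already given, and the proof is complete.
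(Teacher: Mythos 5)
Your argument is correct: it is the standard deformation-theoretic proof (functorial tangent vectors, Grothendieck's identification of the tangent space of the fibre $\mathrm{Quot}(\mathcal{E}_s)$ with $\mathrm{Hom}(F,G)$, and the obstruction class in $\mathrm{Ext}^1(F,G)$ obtained by pulling back and pushing out the Kodaira--Spencer class, verified by the local-lift/\v{C}ech argument). The paper gives no independent argument and simply cites \cite{HL10}, Proposition 2.2.7, whose proof is essentially what you wrote; note also that your description of the connecting map as the composition $T_sS\rightarrow \mathrm{Ext}^1(\mathcal{E}_s,\mathcal{E}_s)\rightarrow \mathrm{Ext}^1(F,G)$ is exactly the factorization the paper relies on later in the proof of Proposition \ref{vector bundle normal}.
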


\begin{proof}

See \cite{HL10} Proposition 2.2.7.
\end{proof}

Let $E$ be a vector bundle over $C$, the classical Harder-Narasimhan filtration and Jordan-Holder filtration show that if $E$ is not stable(resp. semistable), then there is a maximal stable subbundle $F_0\subset E$ with the property $\text{deg}\mathcal{H}om(F_0,E/F_0)\leq 0$ (resp. $< 0$). $F_0$ is taken to be the first term of the Jordan-Holder filtration of  the maximal destabilizing subbundle of $E$(so different choice of $F_0$ have same slope). Moreover, if we say $F_0$ is of type $\mu=(r',d')$, i.e. $F$ is of rank $r'$ and degree $d'$, Then for a flat family of vector bundle $\mathcal{E}$ over $S\times C$, the locus $S^{\mu}\subset S$ parametrizing $\mathcal{E}_t$ having a subbundle described above with type $\mu$, is locally closed and non-empty for finitely many $\mu$.

Similarly properties hold for principal $G$ bundles. Let $E$ be a principal $G$ bundle, then there is a unique standard parabolic subgroup $P$ and a unique reduction $E_P$, and if we denote $E_{\mathsf{s}}$ to be the vector bundle associated to $E_P$ by the natural representation of $P$ on the vector space $\mathsf{s}:=\mathsf{g}/ \mathsf{p}$, where $\mathsf{g}$ and $\mathsf{p}$ are Lie algebras of $G$ and $P$, then $\text{deg}E_{\mathsf{s}}<0$. More over, we have similar concept of $S^{\mu}$. For details and proof, please refer to \cite{KN97}.

\begin{prop}\label{vector bundle normal}

Let $\mathcal{E}$ be a flat family of vector bundles or principal $G$ bundles over $S\times C$. Assume that for each closed point $t\in S$, the Kodaira-Spencer maps $$T_tS\rightarrow \text{Ext}^1(\mathcal{E}_t, \mathcal{E}_t)\text{\ \ \ \ or\ \ \ \ }T_tS\rightarrow H^1(C, \mathcal{E}_t(Ad))$$ are surjective. Then:
\begin{enumerate}

\item[(1)] In the vector bundle case, for any $s\in S^{\mu}$, the normal space $N_sS^{\mu}$ is isomorphic to $\text{Ext}^1(F_0,\mathcal{E}_s/F_0)$, where $F_0$ is a maximal stable bundle described above.

\item[(2)] In the principal $G$ bundles case, for any $s\in S^{\mu}$, the normal space $N_sS^{\mu}$ is isomorphic to $H^1(C, \mathcal{E}_{s,\mathsf{s}})$ where $\mathcal{E}_{s,\mathsf{s}}$ is described above.

\end{enumerate}

\end{prop}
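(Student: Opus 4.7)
My plan is to reduce both statements to the exact sequence of Proposition \ref{begin prop} (and its analog for the scheme of $P$-reductions), combined with the Kodaira-Spencer surjectivity hypothesis and the vanishing of $\text{Ext}^2$ on the curve $C$.

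\textbf{Vector bundle case.} Let $\phi: Q \to S$ be the relative Quot scheme parametrizing flat quotients of $\mathcal{E}$ with the Hilbert polynomial of $\mathcal{E}_t/F_0$, and let $Q^\mu \subseteq Q$ be the open sublocus where the kernel is stable of type $\mu$. After shrinking $S$ so that the first term of the Jordan-Hölder filtration of the maximal destabilizer is unambiguous, the assignment $t \mapsto F_0(\mathcal{E}_t)$ defines a section $\sigma: S^\mu \to Q^\mu$ of $\phi|_{Q^\mu}$. For $s \in S^\mu$ with $q = \sigma(s)$, Proposition \ref{begin prop} yields
\begin{equation*}
0 \to \text{Hom}(F_0, \mathcal{G}) \to T_q Q \to T_s S \xrightarrow{\delta} \text{Ext}^1(F_0, \mathcal{G}), \qquad \mathcal{G} := \mathcal{E}_s/F_0.
\end{equation*}
Openness of $Q^\mu$ in $Q$ gives $T_q Q^\mu = T_q Q$; since $\sigma$ is a section of $\phi|_{Q^\mu}$, the image of $T_q Q$ in $T_s S$ coincides with $T_s S^\mu$, so by exactness of (4.1) we obtain $T_s S^\mu = \ker \delta$.

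Next, I would show $\delta$ is surjective. Applying $\text{Hom}(-, \mathcal{E}_s)$ and then $\text{Hom}(F_0, -)$ to $0 \to F_0 \to \mathcal{E}_s \to \mathcal{G} \to 0$, using that $\text{Ext}^2$ vanishes on $C$, yields surjections
\begin{equation*}
\text{Ext}^1(\mathcal{E}_s, \mathcal{E}_s) \twoheadrightarrow \text{Ext}^1(F_0, \mathcal{E}_s) \twoheadrightarrow \text{Ext}^1(F_0, \mathcal{G}).
\end{equation*}
Precomposing with the surjective Kodaira-Spencer map $T_s S \twoheadrightarrow \text{Ext}^1(\mathcal{E}_s, \mathcal{E}_s)$ forces $\delta$ to be surjective, so $N_s S^\mu \cong \text{Ext}^1(F_0, \mathcal{G})$, proving (1).

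For (2), the same argument goes through with $Q$ replaced by the relative section scheme $\pi: \mathcal{Y} \to S$ of $\mathcal{E}/P \to S \times C$; the canonical Harder-Narasimhan $P$-reduction provides a section $\sigma: S^\mu \to \mathcal{Y}$ landing in the open sublocus of type-$\mu$ reductions. The deformation theory of sections of a smooth morphism supplies the analog of Proposition \ref{begin prop},
\begin{equation*}
0 \to H^0(C, \mathcal{E}_{s,\mathsf{s}}) \to T_y \mathcal{Y} \to T_s S \to H^1(C, \mathcal{E}_{s,\mathsf{s}}),
\end{equation*}
and surjectivity of the rightmost map follows from the hypothesis together with the long cohomology sequence of $0 \to \mathcal{E}_{s,P}(\mathsf{p}) \to \mathcal{E}_s(Ad) \to \mathcal{E}_{s,\mathsf{s}} \to 0$ (again using $H^2(C,-) = 0$). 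The identification $T_s S^\mu = \ker$ proceeds verbatim, yielding $N_s S^\mu \cong H^1(C, \mathcal{E}_{s,\mathsf{s}})$. The main technical obstacle I expect is this first step in each case: upgrading the canonical destabilizing object (the first Jordan-Hölder term, respectively the canonical $P$-reduction) to an honest section of the relative Quot/section scheme over an open of $S$, so that the section-derived identification $T_s S^\mu = \ker \delta$ genuinely holds at the scheme level; once this is done, the cohomological surjectivity arguments are routine consequences of $\dim C = 1$.
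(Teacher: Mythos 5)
Your proposal is essentially the paper's own argument: both rest on the exact sequence of Proposition \ref{begin prop} applied to the relative Quot scheme (resp.\ the scheme of sections of $\mathcal{E}/P\rightarrow S\times C$ in the principal bundle case), prove surjectivity of the connecting map by factoring it through the Kodaira--Spencer map followed by the natural surjection $\text{Ext}^1(\mathcal{E}_s,\mathcal{E}_s)\rightarrow \text{Ext}^1(F_0,\mathcal{E}_s/F_0)$ (resp.\ $H^1(C,\mathcal{E}_s(Ad))\rightarrow H^1(C,\mathcal{E}_{s,\mathsf{s}})$, using that there are no higher obstructions on a curve), and identify $T_sS^{\mu}$ with the image of the tangent space of the parameter scheme. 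The only difference is presentational: you realize the lifting $S^{\mu}\rightarrow Q$ explicitly as a section built from the canonical destabilizing object (flagging its uniqueness), where the paper simply asserts that $T_qQ\rightarrow T_sS$ factors as $T_qQ\twoheadrightarrow T_sS^{\mu}\hookrightarrow T_sS$ because the image of $\phi$ covers $S^{\mu}$.
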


\begin{proof}

For the vector bundle case, we first consider the Quot-scheme $\phi:Q\rightarrow S$ parametrizing all subbundles of type $\mu$, then analyse the exact sequence \ref{exact sequence}. Firstly the image of $\phi$ covers $S^{\mu}$, we see that the map $T_qQ\rightarrow T_sS$ factors as $T_qQ\twoheadrightarrow T_sS^{\mu} \hookrightarrow T_sS$. Secondly, by the proof of exactness of \ref{exact sequence}, we see that the map $T_sS\rightarrow \text{Ext}^1(F_0,\mathcal{E}_s/F_0)$ indeed factors as $$T_sS\rightarrow\text{Ext}^1(\mathcal{E}_s, \mathcal{E}_s) \rightarrow \text{Ext}^1(F_0,\mathcal{E}_s/F_0).$$ The first map is Kodaira-Spencer map which is surjective by assumption; the second map is induced by the exact sequence: $$0\longrightarrow F_0\longrightarrow \mathcal{E}_s \longrightarrow \mathcal{E}_s/F_0 \longrightarrow0$$ which is surjective naturally. Thus we see that $\text{Ext}^1(F_0,\mathcal{E}_s/F_0)$ is isomorphic to the cokernel of $T_qQ\rightarrow T_sS$, i.e. the normal space $N_sS^{\mu}$.

The principal bundle case is similar, except we need a variety to parametrize all reductions to $P$. But this is already done in \cite{R96}, it is an open subscheme $\mathcal{U}$ of $\text{Hilb}_{(\mathcal{E}/P)_{/S}}$, parametrizing all sections of $\mathcal{E}/P\rightarrow S$. Now we apply Proposition \ref{begin prop} to this $\mathcal{U}$, with similar method above, we have our proposition.
\end{proof}

\begin{cor}

With same notation and assumptions as above, if we assume $S$ is smooth, we have:

\begin{enumerate}

\item[(1)] In the vector bundle case, the rank of $\mathcal{E}$ is assumed to be $r$, then we have
\begin{align*}
 \text{codim}(S^{us})&\geq (r-1)(g-1)\\
 \text{codim}(S^{uss})&> (r-1)(g-1)
 \end{align*}

\item[(2)] In the principal bundle case, we have
\begin{align*}
 \text{codim}(S^{us})&\geq \text{rank}(\mathcal{E}_{t,\mathsf{s}})(g-1)\\
 \text{codim}(S^{uss})&> \text{rank}(\mathcal{E}_{t,\mathsf{s}})(g-1)
 \end{align*}

\end{enumerate}

\end{cor}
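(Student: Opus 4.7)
The plan is to stratify the unstable locus $S^{us}$ (respectively unsemistable locus $S^{uss}$) into the finitely many locally closed subschemes $S^{\mu}$ indexed by the type $\mu$ of the canonical destabilizing datum, bound the codimension of each stratum separately, and then take the minimum. Since only finitely many $\mu$ give non-empty $S^{\mu}$, a uniform bound across strata immediately yields the claimed inequality.

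The first step is to convert the codimension estimate into a cohomological one. Since $S$ is smooth, for any closed point $s \in S^{\mu}$ we have
\[
\mathrm{codim}_{s}\bigl(S^{\mu} \subset S\bigr) \;=\; \dim_{s} S - \dim_{s} S^{\mu} \;\geq\; \dim T_{s}S - \dim T_{s}S^{\mu} \;=\; \dim N_{s}S^{\mu},
\]
so it suffices to bound $\dim N_{s}S^{\mu}$ from below. By Proposition \ref{vector bundle normal}, this normal space is identified with $\mathrm{Ext}^{1}(F_{0}, \mathcal{E}_{s}/F_{0})$ in the vector bundle case and with $H^{1}(C, \mathcal{E}_{s,\mathsf{s}})$ in the principal bundle case.

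The core computation is then Riemann--Roch. In the vector bundle case, setting $r' = \mathrm{rank}(F_{0})$ and $G = \mathcal{E}_{s}/F_{0}$, we have
\[
\dim \mathrm{Ext}^{1}(F_{0}, G) \;=\; h^{0}\bigl(F_{0}^{\vee}\otimes G\bigr) \,-\, \deg \mathcal{H}om(F_{0}, G) \,+\, r'(r-r')(g-1).
\]
Dropping $h^{0} \geq 0$, using the defining inequality $\deg \mathcal{H}om(F_{0}, G) \leq 0$ (respectively $< 0$, so $\leq -1$), together with the elementary bound $r'(r-r') \geq r-1$ for $1 \leq r' \leq r-1$, immediately gives $\dim \mathrm{Ext}^{1}(F_{0}, G) \geq (r-1)(g-1)$ in the unstable case and $\dim \mathrm{Ext}^{1}(F_{0}, G) \geq (r-1)(g-1)+1$ in the unsemistable case. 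In the principal bundle case, Riemann--Roch applied to $\mathcal{E}_{s,\mathsf{s}}$ gives
\[
\dim H^{1}(C, \mathcal{E}_{s,\mathsf{s}}) \;=\; h^{0}(\mathcal{E}_{s,\mathsf{s}}) \,-\, \deg \mathcal{E}_{s,\mathsf{s}} \,+\, \mathrm{rank}(\mathcal{E}_{s,\mathsf{s}})(g-1),
\]
and the bound $\deg \mathcal{E}_{s,\mathsf{s}} < 0$ recorded before Proposition \ref{vector bundle normal} yields the desired estimate, strict in the unsemistable refinement.

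The genuine content of this corollary is already absorbed into Proposition \ref{vector bundle normal}; no real obstacle remains beyond the mechanical Riemann--Roch check, a small numerical inequality for $r'(r-r')$, and the observation that a finite union inherits the minimum of the codimensions of its pieces. The only subtle point worth double-checking is that the codimension at a non-smooth point of a stratum is still bounded below by the dimension of the normal space, which is guaranteed here precisely by the smoothness hypothesis on the ambient $S$.
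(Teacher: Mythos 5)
Your argument is correct and is essentially the paper's own proof: stratify the unstable/unsemistable locus by the finitely many nonempty strata $S^{\mu}$, identify the normal space with $\mathrm{Ext}^1(F_0,\mathcal{E}_s/F_0)$ (resp. $H^1(C,\mathcal{E}_{s,\mathsf{s}})$) via Proposition \ref{vector bundle normal}, and conclude by Riemann--Roch together with the degree bounds on $\mathcal{H}om(F_0,\mathcal{E}_s/F_0)$ and $\mathcal{E}_{s,\mathsf{s}}$. Your only addition is making explicit the inequality $r'(r-r')\geq r-1$ and the passage from normal-space dimension to codimension, steps the paper leaves implicit (note only that combining $r'(r-r')\geq r-1$ with the factor $g-1$ uses $g\geq 1$; for $g=0$ the claimed bound is trivially true since codimension is nonnegative).
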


\begin{proof}

Since $S^{\mu}$ is non-empty for only finitely many $\mu$, by proposition above, we only need to calculate $\text{dim}\text{Ext}^1(F_0,\mathcal{E}_t/F_0)$ and $\text{dim}H^1(C, \mathcal{E}_{t,\mathsf{s}})$. Using Riemann-Roch, we have
\begin{align*}
\text{dim}\text{Ext}^1(F_0,\mathcal{E}_t/F_0)&=\text{dim}\text{Hom}(F_0,\mathcal{E}_t/F_0)-\text{deg}\mathcal{H} om(F_0,\mathcal{E}_t/F_0)+r'(r-r')(g-1)\\
\text{dim}H^1(C, \mathcal{E}_{t,\mathsf{s}})&=\text{dim}H^0(C, \mathcal{E}_{t,\mathsf{s}})-\text{deg}\mathcal{E}_{t,\mathsf{s}}+\text{rank}\mathcal{E}_{t,\mathsf{s}}(g-1)
\end{align*}
where $r'$ is the rank of $F$. Thus our corollary holds by analyse of degrees of $\mathcal{H} om(F_0,\mathcal{E}_t/F_0)$ and $\mathcal{E}_{t,\mathsf{s}}$ before.
\end{proof}

\subsection{The case of parabolic vector bundle}

We fix $\mathcal{E}$ to be a flat family of parabolic vector bundles of type $\sigma$ over $S\times C$.To apply our method to parabolic vector bundle case, we need to construct an $S$-scheme parametrizing all flat quotients of $\mathcal{E}$, with fixed parabolic type $\sigma'$.

We begin with a functor $$\mathsf{F}:(Sch/S)^{op}\longrightarrow (Set)$$ as follows: for any $f:T\rightarrow S$, $\mathsf{F}(f:T\rightarrow S)$ is the set of isomorphism classes of all quotients $f^*_C\mathcal{E}\rightarrow \mathcal{G}\rightarrow 0$, such that the induced parabolic structure on $\mathcal{G}$ makes $\mathcal{G}$ a flat family of parabolic vector bundle of rank $r'$ and degree $d'$ with fixed type $\sigma'$.

\begin{prop}

$\mathsf{F}$ is represented by a finite type scheme $\phi_P: Q_P\rightarrow S$.

\end{prop}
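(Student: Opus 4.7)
The plan is to realize $Q_P$ as a locally closed subscheme of a classical relative Quot scheme. First I would let $P'$ denote the Hilbert polynomial of a rank $r'$, degree $d'$ vector bundle on $(C,\mathcal{O}(1))$ and form the relative Grothendieck Quot scheme $\mathrm{Quot}_{S\times C/S}(\mathcal{E},P')\to S$, which is projective, hence of finite type, over $S$. Restricting to the open subscheme $Q$ where the universal quotient $\mathcal{G}_{\mathrm{univ}}$ is a vector bundle of rank $r'$ on $Q\times_S(S\times C)$ produces a finite-type $S$-scheme parametrizing the underlying vector bundle quotients, with universal sequence
$$0\to\mathcal{F}_{\mathrm{univ}}\to \phi_Q^*\mathcal{E}\to \mathcal{G}_{\mathrm{univ}}\to 0.$$

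Next I would impose the parabolic type. For each $x\in D$ and each index $i$ appearing in the filtration of $\mathcal{E}$ at $x$, restriction to $Q\times_S(S\times\{x\})\cong Q$ gives a morphism of vector bundles on $Q$,
$$\psi_{x,i}\colon F_i(\phi_Q^*\mathcal{E})\big|_{x}\longrightarrow \mathcal{G}_{\mathrm{univ}}\big|_{x}.$$
The induced parabolic filtration on any quotient $\mathcal{G}$ is $\bar F_i(\mathcal{G})=\mathrm{Im}\bigl(F_i(\mathcal{E})\to \mathcal{G}\bigr)$ with the same weights, so demanding that the induced parabolic structure have type $\sigma'$ is equivalent to demanding that the fibrewise rank of each $\psi_{x,i}$ equal a specific number $\rho_i(x)$ read off from $\sigma'$. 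The condition $\mathrm{rank}\,\psi_{x,i}\le \rho_i(x)$ is cut out by the vanishing of the $(\rho_i(x)+1)\times(\rho_i(x)+1)$ minors and is closed, while $\mathrm{rank}\,\psi_{x,i}\ge \rho_i(x)$ is open; their intersection over all pairs $(x,i)$ defines a locally closed subscheme $Q_P\hookrightarrow Q$.

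To finish one checks the universal property. On $Q_P$ every $\psi_{x,i}$ has locally constant fibrewise rank $\rho_i(x)$, hence $\bar F_i(\mathcal{G}_{\mathrm{univ}})$ is a sub-vector bundle of $\mathcal{G}_{\mathrm{univ}}|_{Q_P\times\{x\}}$, and $\mathcal{G}_{\mathrm{univ}}$ becomes a flat family of parabolic vector bundles of type $\sigma'$ over $Q_P$. Conversely, given any $f\colon T\to S$ and a quotient $f_C^*\mathcal{E}\twoheadrightarrow \mathcal{G}$ whose induced parabolic structure is flat of type $\sigma'$, the rank profile of the pulled-back $\psi_{x,i}$ on $T$ must be exactly $\rho_\bullet(x)$, so the induced map $T\to Q$ factors uniquely through $Q_P$. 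The only subtlety is making sure that the scheme structure defined by the minor ideals coincides with the one required by the functor $\mathsf{F}$, which follows from the standard fact that constant fibrewise rank of a map between vector bundles is equivalent to the image being a sub-vector bundle and is preserved under arbitrary base change. Finiteness of type for $\phi_P\colon Q_P\to S$ is automatic, being locally closed in the finite-type $S$-scheme $Q$.
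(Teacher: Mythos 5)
Your construction is correct in outline, but it follows a genuinely different route from the paper. The paper does not work with the parabolic data on $C$ directly: it passes through the Kawamata cover $p\colon Y\to C$ and the orbifold correspondence (Proposition \ref{equivalence between pa and or}), forms the ordinary relative Quot scheme $Q\to S$ of the associated orbifold family $\mathcal{W}$ on $S\times Y$, and realizes $Q_P$ as an open subscheme (fixed orbifold type, locally free quotient) of the closed subscheme $Q^{\Gamma}$ of $\Gamma$-invariant points, citing Seshadri for the fact that $Q^{\Gamma}$ parametrizes orbifold quotients; the universal property is then verified by transporting quotients through the equivalence, using exactness of taking $\Gamma$-invariants. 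You instead stay on $C$ and cut the parabolic condition out of the classical relative Quot scheme of $\mathcal{E}$ by rank conditions on the evaluation maps $\psi_{x,i}$ at the marked points. Your construction is more elementary and self-contained; the paper's detour through $Y$ is what makes the next step cheap, since in Corollary \ref{parabolic exact sequence} the identification of $Q_P$ with a $\Gamma$-fixed locus of an ordinary Quot scheme yields the tangent--obstruction sequence with $\mathrm{Hom}_{par}$ and $\mathrm{Ext}^1_{par}$ simply by taking $\Gamma$-invariants of the standard sequence on $Y$, whereas with your model that deformation computation would have to be done by hand.

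One step of your argument needs repair: the ``standard fact'' you invoke, that constant fibrewise rank of a map of vector bundles is equivalent to its image being a subbundle and is preserved under base change, is false over non-reduced bases (multiplication by $\epsilon$ on $\mathcal{O}_T$ for $T=\mathrm{Spec}\,k[\epsilon]/(\epsilon^2)$ has constant fibrewise rank $0$, yet its image is not a subbundle and the $1\times 1$ minor is not scheme-theoretically zero). Since $T$ ranges over arbitrary $S$-schemes, both directions must be argued without it. On $Q_P$ this is fine: the scheme-theoretic vanishing of the $(\rho_i(x)+1)$-minors together with a unit $\rho_i(x)\times\rho_i(x)$ minor near each point lets you put $\psi_{x,i}$ locally in the block form $\mathrm{id}\oplus 0$, so its image really is a subbundle of the asserted rank. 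For the converse, mere constancy of the pointwise rank of the pulled-back $\psi_{x,i}$ on $T$ would not force the classifying map to factor through the determinantal subscheme; what saves you is that $\mathsf{F}$ demands the induced filtration on $\mathcal{G}$ be a \emph{flat} family of type $\sigma'$, so each image sheaf $\mathrm{Im}(\psi_{x,i})$ is locally free of rank $\rho_i(x)$, hence $\psi_{x,i}$ factors through a rank-$\rho_i(x)$ bundle, $\Lambda^{\rho_i(x)+1}\psi_{x,i}=0$ identically, and the map $T\to Q$ does factor through $Q_P$ scheme-theoretically. With this substitution your proof is complete.
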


\begin{proof}

Thanks to Proposition \ref{equivalence between pa and or}, we will translate parabolic bundle and orbifold bundle interchangeably.

$\mathcal{E}$ gives a flat family of orbifold bundle $\mathcal{W}$ over $S\times Y$. Firstly we consider the Quot-scheme $Q\rightarrow S$, parametrizing all flat quotients of $\mathcal{W}$ with certain fixed rank and degree. Secondly, since $\mathcal{W}$ is an orbifold bundle, we see that $\Gamma$ acts on $Q$, and the closed subscheme $Q^{\Gamma}$ of $\Gamma$-invariant points parametrizes all the orbifold quotients of $\mathcal{W}$(\cite{Se11}). At last, by\cite{Se11} again, there is an open subscheme $Q_P\subset Q^{\Gamma}$, parametrizing all locally free orbifold quotients with fixed type $\sigma'$. We claim that $Q_P$ represents $\mathsf{F}$.

For any $f: T\rightarrow S$, and any quotient $f^*_C\mathcal{E}\rightarrow \mathcal{G}\rightarrow 0$, using the correspondence in  Proposition \ref{equivalence between pa and or}, we see easily that there is an $S$-morphism: $T\rightarrow Q_P$. Conversely, Given an $S$-morphism $\varphi: T\rightarrow Q_P$, this would give a flat orbifold bundle quotient $f_Y^*\mathcal{W}\rightarrow \tilde{\mathcal{G}}\rightarrow 0$. By our correspondence, we have a quotient $$f^*_C\mathcal{E}\rightarrow \mathcal{G}\rightarrow 0$$ where $\mathcal{G}$ is a flat family of parabolic vector bundles with type $\sigma'$. Notice that this is a quotient since taking $\Gamma$ invariant sections of $\mathbb{C}$-modules is an exact functor.
\end{proof}

\begin{rem}

In \cite{G04}, a similar scheme is constructed in a different way.

\end{rem}

\begin{cor}\label{parabolic exact sequence}

For any $s\in S$ and $q\in \phi_P^{-1}(s)$, corresponding to exact sequence: $$0\longrightarrow F\longrightarrow \mathcal{E}_s\longrightarrow G\longrightarrow 0$$ Then we have an exact sequence: $$0\longrightarrow \text{Hom}_{par}(F,G)\longrightarrow T_qQ_P\longrightarrow  T_sS\longrightarrow \text{Ext}^1_{par}(F, G).$$

\end{cor}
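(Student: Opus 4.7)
The plan is to reduce this corollary to the classical non-parabolic statement (Proposition~\ref{begin prop}) by passing to the Kawamata cover $p:Y\to C$ and taking $\Gamma$-invariants, exploiting the equivalence of categories recalled in Proposition~\ref{equivalence between pa and or}.

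First, I translate everything to the orbifold picture. Let $\mathcal{W}$ be the flat family of orbifold bundles on $S\times Y$ corresponding to $\mathcal{E}$, and let $\widetilde{F},\widetilde{G}$ be the orbifold bundles on $Y$ corresponding to $F,G$. Recall from the construction of $Q_P$ in the preceding proposition that $Q_P$ sits as an open subscheme of $Q^\Gamma$, where $\phi:Q\to S$ is the ordinary relative Quot-scheme of $\mathcal{W}$ on $S\times Y\to S$ (of appropriate rank and degree), and that the point $q\in \phi_P^{-1}(s)$ corresponds to a $\Gamma$-fixed point $\widetilde{q}\in Q^\Gamma$ given by the short exact sequence
\begin{equation*}
0\longrightarrow \widetilde{F}\longrightarrow \mathcal{W}_s\longrightarrow \widetilde{G}\longrightarrow 0.
\end{equation*}

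Second, I apply Proposition~\ref{begin prop} to $\phi:Q\to S$ at the point $\widetilde{q}$ to obtain the four-term exact sequence
\begin{equation*}
0\longrightarrow \mathrm{Hom}(\widetilde{F},\widetilde{G})\longrightarrow T_{\widetilde{q}}Q\longrightarrow T_sS\longrightarrow \mathrm{Ext}^1(\widetilde{F},\widetilde{G}).
\end{equation*}
All arrows here are $\Gamma$-equivariant: the $\Gamma$-action on $\widetilde{F},\widetilde{G}$ induces natural $\Gamma$-structures on $\mathrm{Hom}$ and $\mathrm{Ext}^1$, the $\Gamma$-action on $Q$ (coming from its action on $\mathcal{W}$) induces the action on $T_{\widetilde{q}}Q$, and $\Gamma$ acts trivially on $T_sS$ since it is the base.

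Third, I take $\Gamma$-invariants. Since we work in characteristic zero and $\Gamma$ is finite, the functor $(-)^\Gamma$ is exact, so the sequence remains exact after applying it. By the equivalence of Proposition~\ref{equivalence between pa and or} (and its extension to $\mathrm{Ext}^1$ via the abelian category structure from \cite{Yo95}) we have the identifications $\mathrm{Hom}(\widetilde{F},\widetilde{G})^\Gamma=\mathrm{Hom}_{par}(F,G)$ and $\mathrm{Ext}^1(\widetilde{F},\widetilde{G})^\Gamma=\mathrm{Ext}^1_{par}(F,G)$. Moreover, since $Q_P$ is open in $Q^\Gamma$ and $\widetilde{q}$ is a fixed point with $|\Gamma|$ invertible, the standard fact that the tangent space to the fixed locus is the invariants of the ambient tangent space gives $T_qQ_P=(T_{\widetilde{q}}Q)^\Gamma$. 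Substituting these identifications produces exactly the claimed sequence.

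The only genuine work, and the main obstacle, is the third step: verifying that the identification of parabolic $\mathrm{Ext}^1$ with $\Gamma$-invariant orbifold $\mathrm{Ext}^1$ is compatible with the connecting map coming from deformation theory, and that $T_qQ_P=(T_{\widetilde q}Q)^\Gamma$ (rather than merely an inclusion). For the latter, one uses that $Q_P$ is defined as the locus where the quotient is locally free with fixed orbifold type $\sigma'$, which is open in $Q^\Gamma$ and unchanged under first-order $\Gamma$-invariant deformations; for the former, one checks that the Kodaira--Spencer map is functorial in the sense that the arrow $T_{\widetilde q}Q\to \mathrm{Ext}^1(\widetilde F,\widetilde G)$ is intrinsically the deformation-obstruction map of the extension, which is $\Gamma$-equivariant by naturality. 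Once these two compatibilities are in place, the proof is complete.
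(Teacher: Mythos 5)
Your plan follows essentially the same route as the paper's own proof: pass to the orbifold picture on the Kawamata cover, apply Proposition \ref{begin prop} there, observe the resulting four-term sequence is $\Gamma$-equivariant, take $\Gamma$-invariants (exact in characteristic zero), and identify $\mathrm{Hom}(\tilde F,\tilde G)^\Gamma$, $(T_{\tilde q}Q)^\Gamma$ and $\mathrm{Ext}^1(\tilde F,\tilde G)^\Gamma$ with $\mathrm{Hom}_{par}(F,G)$, $T_qQ_P$ and $\mathrm{Ext}^1_{par}(F,G)$ respectively (the paper invokes a spectral sequence argument for the last identification). The compatibilities you flag as remaining work are exactly the points the paper asserts, so the proposal is correct and matches the paper's argument.
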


\begin{proof}

Let $0\rightarrow \tilde{F} \rightarrow \mathcal{W}_s\rightarrow \tilde{G} \rightarrow 0$ be the corresponding exact sequence of orbifold bundles over $Y$. When we regard $q$ as a point of $Q$, apply the exact sequence \ref{exact sequence}, we have an exact sequence: $$0\longrightarrow \text{Hom}(\tilde{F},\tilde{G})\longrightarrow T_qQ\longrightarrow  T_sS\longrightarrow \text{Ext}^1(\tilde{F},\tilde{G})$$
However, this sequence is in fact a $\Gamma$-exact sequence, Thus we have: $$0\longrightarrow \text{Hom}(\tilde{F},\tilde{G})^{\Gamma}\longrightarrow (T_qQ)^{\Gamma}\longrightarrow  T_sS\longrightarrow \text{Ext}^1(\tilde{F},\tilde{G})^{\Gamma}$$ which is exact since taking $\Gamma$-invariant sections of $\mathbb{C}$-modules is an exact functor. Now, it is known that $\text{Hom}(\tilde{F},\tilde{G})^{\Gamma}=\text{Hom}_{par}(F,G)$ and $(T_qQ)^{\Gamma}=T_qQ_P$. Finally, spectral sequence argument tells $\text{Ext}^1(\tilde{F},\tilde{G})^{\Gamma}=\text{Ext}^1_{par}(F, G)$, we are done.
\end{proof}

Before going further, we mention that there are Harder-Narasimhan filtration and Jordan-Holder filtration for parabolic bundles. So similar as in the previous subsection, for a parabolic bundle which is not stable (resp. semistable), there is a maximal stable subbundle $F_0$ such that $pardeg\mathcal{H}om_{par}(F_0,E/F_0)\leq0$ (resp. $<0$). Moreover, for a family of parabolic vector bundle as above, $S^{\mu}$ defined as before, is locally closed and non-empty for finitely many $\mu$.

\begin{prop}

Assume that for any $t\in S$, the Kodaira-Spencer map $$T_tS\longrightarrow \text{Ext}^1_{par}(\mathcal{E}_t,\mathcal{E}_t)$$ is surjective. Let $S^{\mu}\subset S$ be the locally closed described before. Then for any $s\in S^{\mu}$, we have $N_sS^{\mu}\cong \text{Ext}^1_{par}(F_0,\mathcal{E}_s/F_0)$.

\end{prop}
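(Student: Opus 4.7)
The plan is to imitate the vector-bundle argument of Proposition~\ref{vector bundle normal}, replacing the ordinary Quot scheme with the parabolic Quot scheme $\phi_P : Q_P \to S$ constructed just above. Let $\mu'$ denote the induced parabolic type of the maximal stable parabolic subbundle $F_0 \subset \mathcal{E}_s$ (rank, degree, and flag/weight data at each $x\in D$), and take $Q_P$ to parametrize parabolic quotients of $\mathcal{E}$ whose kernel has type $\mu'$. By the parabolic Harder-Narasimhan / Jordan-H\"older theory (equivalent, via Proposition~\ref{equivalence between pa and or}, to the ordinary one for orbifold bundles on $Y$), such an $F_0$ exists over every closed point of $S^{\mu}$, and conversely the existence of such a parabolic subbundle forces the base point into $S^{\mu}$. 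Hence $\phi_P$ surjects onto $S^{\mu}$, and for any $q\in \phi_P^{-1}(s)$ the induced tangent map factors as
\begin{equation*}
T_q Q_P \twoheadrightarrow T_s S^{\mu} \hookrightarrow T_s S.
\end{equation*}

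Next, the four-term exact sequence of Corollary~\ref{parabolic exact sequence},
\begin{equation*}
0 \to \text{Hom}_{par}(F_0, \mathcal{E}_s/F_0) \to T_q Q_P \to T_s S \xrightarrow{\partial} \text{Ext}^1_{par}(F_0, \mathcal{E}_s/F_0),
\end{equation*}
identifies the cokernel of $T_q Q_P \to T_s S$ with the image of $\partial$. Combined with the factorization above, this gives a canonical injection $N_s S^{\mu} \hookrightarrow \text{Ext}^1_{par}(F_0, \mathcal{E}_s/F_0)$, so it only remains to prove that $\partial$ is surjective.

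To see surjectivity, one factors $\partial$ as
\begin{equation*}
T_s S \xrightarrow{\kappa} \text{Ext}^1_{par}(\mathcal{E}_s, \mathcal{E}_s) \xrightarrow{\iota} \text{Ext}^1_{par}(F_0, \mathcal{E}_s/F_0),
\end{equation*}
where $\kappa$ is the parabolic Kodaira-Spencer map (surjective by hypothesis) and $\iota$ is induced by the short exact sequence $0 \to F_0 \to \mathcal{E}_s \to \mathcal{E}_s/F_0 \to 0$, applying $\text{Hom}_{par}(-, \mathcal{E}_s/F_0)$ followed by $\text{Hom}_{par}(F_0, -)$. Via Proposition~\ref{equivalence between pa and or}, parabolic Ext-groups on $C$ are $\Gamma$-invariant parts of orbifold Ext-groups on the smooth curve $Y$; since dimension of $Y$ is one and taking $\Gamma$-invariants is exact in characteristic zero, $\text{Ext}^i_{par} = 0$ for $i \geq 2$. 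Therefore each connecting map in the two long exact sequences defining $\iota$ lands in a vanishing group, both stages of $\iota$ are surjective, and $\partial$ is surjective as required.

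The main obstacle is the bookkeeping around the parabolic type: one needs to check that the relevant component of $Q_P$ really parametrizes precisely the maximal destabilizing parabolic subbundles of type $\mu'$ (so its image is exactly $S^{\mu}$, not a strict subset), and that the vanishing $\text{Ext}^2_{par} = 0$ transfers cleanly from the orbifold side via a spectral-sequence argument of the same flavor as in the proof of Corollary~\ref{parabolic exact sequence}. Both points are mild, but they are what makes the parabolic case genuinely require the equivalence with orbifold bundles rather than a direct repetition of Proposition~\ref{vector bundle normal}.
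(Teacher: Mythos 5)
Your proposal is correct and follows essentially the same route the paper intends: the paper's proof is literally ``similar to Proposition \ref{vector bundle normal},'' i.e.\ run the vector-bundle argument with the parabolic Quot scheme $\phi_P\colon Q_P\to S$ and the four-term sequence of Corollary \ref{parabolic exact sequence}, factoring $T_sS\to \text{Ext}^1_{par}(F_0,\mathcal{E}_s/F_0)$ through the parabolic Kodaira--Spencer map. Your extra justification of the surjectivity of the second map via $\text{Ext}^2_{par}=0$ (through the orbifold correspondence on the curve $Y$) simply fills in what the paper leaves implicit as ``surjective naturally.''
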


\begin{proof}

Similar as Proposition \ref{vector bundle normal}.
\end{proof}

\begin{cor}

With same assumption as above, assuming that $S$ is smooth and $\text{rank}\ \mathcal{E}=r$ we have
\begin{align*}
 \text{codim}(S^{us})&\geq \text{deg}D/K+(r-1)(g-1)\\
 \text{codim}(S^{uss})&> \text{deg}D/K+(r-1)(g-1).
 \end{align*}

\end{cor}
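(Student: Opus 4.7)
The plan is to mirror the proof of the vector-bundle corollary above, with a parabolic Riemann--Roch computation replacing the ordinary one at the crucial step. By the preceding proposition, for any $s \in S^{\mu}$ the normal space $N_{s}S^{\mu}$ is identified with $\text{Ext}^{1}_{par}(F_{0}, \mathcal{E}_{s}/F_{0})$, where $F_{0}$ is a maximal stable subbundle of the prescribed parabolic type $\mu$. Since the Harder--Narasimhan--Jordan--H\"older theory for parabolic bundles ensures that only finitely many $\mu$ actually occur in a given family, it is enough to bound $\dim \text{Ext}^{1}_{par}(F_{0}, G_{0})$ from below by $\deg D / K + (r-1)(g-1)$ on each stratum, where I write $G_{0} := \mathcal{E}_{s}/F_{0}$ and $r' := \operatorname{rank} F_{0}$.

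Starting from the trivial inequality $\dim \text{Ext}^{1}_{par}(F_{0}, G_{0}) \geq -\chi\bigl(\mathcal{H}om_{par}(F_{0}, G_{0})\bigr)$, I would compute this Euler characteristic in one of two equivalent ways: either treat $\mathcal{H}om_{par}(F_{0}, G_{0})$ as a coherent sheaf on $C$ fitting in a short exact sequence $0 \to \mathcal{H}om_{par} \to \mathcal{H}om \to \mathcal{Q} \to 0$ with $\mathcal{Q}$ a torsion sheaf supported on $D$ and apply ordinary Riemann--Roch, or pass to the orbifold cover $p : Y \to C$ via Proposition \ref{equivalence between pa and or}: the lift $0 \to \tilde{F}_{0} \to \mathcal{W}_{s} \to \tilde{G}_{0} \to 0$ satisfies $\text{Ext}^{i}_{par}(F_{0}, G_{0}) = \text{Ext}^{i}(\tilde{F}_{0}, \tilde{G}_{0})^{\Gamma}$, and since $\Gamma$-invariants are exact in characteristic zero, the parabolic $\chi$ is the $\Gamma$-invariant part of $\chi(\tilde{F}_{0}^{\vee} \otimes \tilde{G}_{0})$, extracted from Riemann--Roch on $Y$ together with Riemann--Hurwitz $2g_{Y} - 2 = \#\Gamma \bigl(2g - 2 + \tfrac{K-1}{K}\deg D\bigr)$. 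Either route leads to an expression of the form
\[
-\chi\bigl(\mathcal{H}om_{par}(F_{0}, G_{0})\bigr) \;=\; r'(r - r')(g - 1) \;+\; \bigl((r - r')\, pardeg\, F_{0} - r' \, pardeg\, G_{0}\bigr) \;+\; \Delta,
\]
where $\Delta \geq 0$ is a rational correction encoding the induced weights and flags of $F_{0}$ and $G_{0}$ at the divisor $D$.

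Three elementary inputs then finish the proof: (i) since $F_{0}$ destabilizes $\mathcal{E}_{s}$ and $pardeg \mathcal{E}_{s} = pardeg F_{0} + pardeg G_{0}$, we have $(r - r')\, pardeg F_{0} \geq r' \, pardeg G_{0}$, strictly in the unsemistable case; (ii) $r'(r - r') \geq r - 1$ for $1 \leq r' \leq r - 1$, so $r'(r-r')(g-1) \geq (r-1)(g-1)$ once $g \geq 1$ (in genus zero the right-hand side is nonpositive and the bound is vacuous); and (iii) the parabolic correction satisfies $\Delta \geq \deg D / K$ whenever $F_{0}$ is a proper nonzero subbundle, because weights lie in $\tfrac{1}{K}\mathbb{Z}$ and the contribution at each $x \in D$ is a nonnegative rational number with denominator $K$, strictly positive as soon as the induced flag on $F_{0}$ at $x$ is nontrivial. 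Combining these inputs yields both codimension estimates. The main obstacle I anticipate is precisely step (iii): producing a uniform lower bound $\Delta \geq \deg D / K$ across all possible Jordan--H\"older types $\mu$. This is a weight-counting exercise that is cleanest via an explicit parabolic Riemann--Roch formula (as in Yokogawa \cite{Yo95}), rather than by redoing the orbifold Lefschetz bookkeeping by hand.
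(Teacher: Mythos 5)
Your proposal is essentially the paper's own proof: the paper likewise reduces to bounding $\dim\text{Ext}^1_{par}(F_0,\mathcal{E}_s/F_0)$ on each stratum $S^{\mu}$, uses Yokogawa's identification $\text{Ext}^1_{par}=H^1\big(C,\mathcal{H}om_{par}(F_0,\mathcal{E}_s/F_0)\big)$ together with Riemann--Roch to write this dimension as $\dim\text{Hom}_{par}-\deg\mathcal{H}om_{par}+r'(r-r')(g-1)$, and then concludes from $\text{pardeg}\,\mathcal{H}om_{par}(F_0,\mathcal{E}_s/F_0)\leq 0$ (resp. $<0$) that $-\deg\mathcal{H}om_{par}\geq \deg D/K$, which is precisely your decomposition into steps (i) and (iii). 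The two delicate points you flag, namely the uniform weight bound $\Delta\geq \deg D/K$ and the comparison $r'(r-r')(g-1)\geq (r-1)(g-1)$ (problematic only for $g=0$), are exactly the points the paper itself asserts without further justification, so your route matches the paper's to the same level of rigor.
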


\begin{proof}

As before, it suffice to estimate $\text{dim}\text{Ext}^1_{par}(F_0,\mathcal{E}_s/F_0)$. By \cite{Yo95}, we have $\text{Ext}^1_{par}(F_0,\mathcal{E}_s/F_0 )=H^1(C, \mathcal{H}om_{par}(F_0,\mathcal{E}_s/F_0))$, so $$\text{dim}\text{Ext}^1_{par}(F_0,\mathcal{E}_s/F_0)=\text{dim}\text{Hom}_{par}(F_0,\mathcal{E}_s/F_0)-\text{deg}\mathcal{H}om_{par}(F_0,\mathcal{E}_s/F_0)+r'(r-r')(g-1)$$ Since $\text{pardeg}\mathcal{H}om_{par}(F_0,\mathcal{E}_s/F_0)\leq 0$. We see that $-\text{deg}\mathcal{H}om_{par}(F_0,\mathcal{E}_s/F_0)\geq \text{deg}D/K$. This would give our results.
\end{proof}

\begin{rem}

Similar results have been given in \cite{Sun00} by a different way.

\end{rem}

\subsection{The case of parabolic symplectic/orthogonal bundle}
The case of parabolic symplectic/orthogonal bundles is similar to those in former two sections, but we need define some notions first.

Let $E$ be a parabolic symplectic bundle over $C$, and $W$ be the corresponding orbifold symplectic bundle over $Y$. By the constructions before, we have $W(Ad)$ and $W_{\mathsf{s}}$ for $\mathsf{s}=\mathsf{g}/\mathsf{p}$. $W$ is an orbifold symplectic bundle, so $W(Ad)$ and $W_{\mathsf{s}}$ are both orbifold vector bundles over $Y$. We use $E(Ad)$ and $E_{\mathsf{s}}$ to denote corresponding parabolic vector bundles over $C$.

For any family of parabolic symplectic bundle $\mathcal{E}$ over $C$ parametrized by a scheme $S$, let $\mathcal{W}$ be the corresponding orbifold symplectic bundle on $S\times Y$. For any $t\in S$, we have the Kodaira-Spencer map $$T_tS\longrightarrow \text{H}^1(Y, \mathcal{W}_t(Ad))$$ for $\mathcal{W}$. This map is obviously $\Gamma$-invariant, so we have $$T_tS\longrightarrow \text{H}^1(Y, \mathcal{W}_t(Ad))^{\Gamma}=\text{H}^1(C, \mathcal{E}_t(Ad))$$

\begin{defn}

The Kodaira-Spencer map for $\mathcal{E}$ at $t\in S$ is given by $$T_tS\longrightarrow \text{H}^1(C, \mathcal{E}_t(Ad)).$$

\end{defn}

\begin{prop}

Let $S$ and $\mathcal{E}$ be as before. Then there is a scheme $\phi_{PS}: Q_{PS}\rightarrow S$ parametrizing all isotropic subbundles of $\mathcal{E}$, flat over $S$ with same fixed type $\tau'$.

Moreover, for any $s\in S$ and $q\in \phi_{PS}^{-1}(s)$, corresponding to an isotropic subbundle $F\subset \mathcal{E}_s$, which corresponds to a reduction to a parabolic subgroup $P$ of $\mathcal{W}_s$, we have an exact sequence: $$0\longrightarrow \text{H}^0(C,\mathcal{E}_{s,\mathsf{s}})\longrightarrow T_qQ_{PS}\longrightarrow T_sS\longrightarrow \text{H}^1(C,\mathcal{E}_{s,\mathsf{s}})$$

\end{prop}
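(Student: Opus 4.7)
The plan is to combine the construction for principal $G$-bundles reviewed in Section \ref{construction} (following \cite{R96}) with the orbifold correspondence of Proposition \ref{symplectic orifold corr}, in the same bootstrap fashion that produced the parabolic Quot scheme $Q_P$ and its tangent sequence in Corollary \ref{parabolic exact sequence}.

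First I would pass to the orbifold side. Let $\mathcal{W}$ be the flat family of orbifold symplectic/orthogonal bundles on $S\times Y$ corresponding to $\mathcal{E}$. By Proposition \ref{symplectic orifold corr}, isotropic subbundles of $\mathcal{E}_s$ correspond bijectively to $\Gamma$-equivariant isotropic subbundles of $\mathcal{W}_s$, and such a subbundle is exactly a reduction of the principal $G$-bundle underlying $\mathcal{W}_s$ to a parabolic subgroup $P$ of $G$. Fixing the parabolic type $\tau'$ on $C$ amounts to fixing the conjugacy class of $P$ together with the isomorphism class of the stabilizer representation on the fibers over $p^{-1}(D)$.

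Next I would construct $Q_{PS}$. Reductions of $\mathcal{W}$ to $P$ are exactly sections of $\mathcal{W}/P\to S\times Y$, and following the recipe of Section \ref{construction} these form an open subscheme $\mathcal{U}$ of $\mathrm{Hilb}_{(\mathcal{W}/P)/S}$. The group $\Gamma$ acts on $\mathcal{W}/P$ by bundle automorphisms, hence on $\mathcal{U}$; I take the closed subscheme $\mathcal{U}^{\Gamma}$ of $\Gamma$-fixed points and then cut out the open locus where the induced stabilizer representations realize the prescribed type $\tau'$ to obtain $Q_{PS}$. That this represents the desired functor is immediate from the two equivalences of Proposition \ref{equivalence between pa and or} and Proposition \ref{symplectic orifold corr}, exactly as in the proof of representability of $Q_P$.

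For the tangent sequence I would apply the principal bundle case used in the proof of Proposition \ref{vector bundle normal}(2) to the orbifold $P$-reduction represented by $q$, obtaining the $\Gamma$-equivariant four-term exact sequence
\begin{align*}
0\longrightarrow \mathrm{H}^0(Y,\mathcal{W}_{s,\mathsf{s}})\longrightarrow T_q\mathcal{U}\longrightarrow T_sS\longrightarrow \mathrm{H}^1(Y,\mathcal{W}_{s,\mathsf{s}}).
\end{align*}
Taking $\Gamma$-invariants is exact on $\mathbb{C}$-vector spaces; the outer terms become $\mathrm{H}^i(C,\mathcal{E}_{s,\mathsf{s}})$ since $p$ is finite and $\mathcal{E}_{s,\mathsf{s}}=(p_*\mathcal{W}_{s,\mathsf{s}})^{\Gamma}$, while the middle term becomes $T_qQ_{PS}$ by the same argument used in Corollary \ref{parabolic exact sequence}. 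This yields the claimed sequence.

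The main obstacle I anticipate is the tangent identification $(T_q\mathcal{U})^{\Gamma}=T_qQ_{PS}$: one must check that the scheme structure on $\mathcal{U}^{\Gamma}$ genuinely represents $\Gamma$-equivariant families of $P$-reductions rather than merely sections whose underlying point is $\Gamma$-fixed, and that $\Gamma$-invariant first-order deformations all lift inside $\mathcal{U}^{\Gamma}$. Linear reductivity of $\Gamma$ and the explicit functorial description above resolve this exactly as in the parabolic vector bundle case, but this is the one step where the soft formalism does essential work rather than a direct computation.
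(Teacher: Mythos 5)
Your proposal is correct and follows essentially the same route as the paper, whose proof is simply the remark that the argument of Corollary \ref{parabolic exact sequence} carries over: pass to the orbifold side, realize reductions to $P$ as ($\Gamma$-fixed points of) the scheme of sections of $\mathcal{W}/P$ from \cite{R96}, and take $\Gamma$-invariants of the four-term tangent sequence, using exactness of invariants over $\mathbb{C}$ and $\mathcal{E}_{s,\mathsf{s}}=(p_*\mathcal{W}_{s,\mathsf{s}})^{\Gamma}$. Your explicit attention to the identification $(T_q\mathcal{U})^{\Gamma}=T_qQ_{PS}$ is a detail the paper leaves implicit, and it is handled correctly.
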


\begin{proof}

Similar to Corollary \ref{parabolic exact sequence}.
\end{proof}

With similar method, we can show that:

\begin{cor}\label{codimension G}

With notations as before, assume that the Kodaira-Spencer map is surjective for any $s\in S$, then we have
\begin{align*}
 \text{codim}(S^{us})&\geq \text{deg}D/K+\text{rank}(\mathcal{E}_{s,\mathsf{s}})(g-1)\\
 \text{codim}(S^{uss})&> \text{deg}D/K+\text{rank}(\mathcal{E}_{s,\mathsf{s}})(g-1).
 \end{align*}

\end{cor}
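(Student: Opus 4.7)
The strategy is to adapt the arguments of the two previous subsections to the parabolic symplectic/orthogonal setting: stratify the unstable (resp.\ unsemistable) locus by the type of a canonical Harder--Narasimhan reduction, identify the normal space of each stratum using the preceding proposition together with surjectivity of the Kodaira--Spencer map, and finally estimate its dimension via Riemann--Roch combined with a parabolic degree inequality on $\mathcal{E}_{s,\mathsf{s}}$.

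First, I would establish the existence of a canonical reduction for each non-(semi)stable parabolic symplectic/orthogonal bundle $E$. Via Proposition \ref{symplectic orifold corr}, this amounts to a $\Gamma$-equivariant canonical parabolic reduction of the orbifold symplectic/orthogonal bundle $W$ over $Y$, obtained by running the Ramanathan-type canonical reduction for principal $G$-bundles (as in \cite{KN97}) equivariantly under the $\Gamma$-action. The resulting isotropic subbundle $F_0\subset E$ (of some type $\mu$) satisfies $\text{pardeg}\,\mathcal{E}_{s,\mathsf{s}}\leq 0$ in the unstable case and $<0$ in the unsemistable case. Standard boundedness for parabolic bundles of fixed type shows that only finitely many $\mu$ occur in a given family, so $S^{us}$ (resp.\ $S^{uss}$) decomposes into finitely many locally closed strata $S^{\mu}$.

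For each stratum, I would apply the preceding proposition to $\phi_{PS}:Q_{PS}\to S$ restricted to type $\mu$. The sequence
\[ 0 \longrightarrow H^0(C,\mathcal{E}_{s,\mathsf{s}}) \longrightarrow T_qQ_{PS} \longrightarrow T_sS \longrightarrow H^1(C,\mathcal{E}_{s,\mathsf{s}}) \]
combined with surjectivity of the Kodaira--Spencer map (exactly mimicking the proof of Proposition \ref{vector bundle normal}) forces $T_qQ_{PS}\to T_sS$ to factor as $T_qQ_{PS}\twoheadrightarrow T_sS^{\mu}\hookrightarrow T_sS$ with cokernel identified with $H^1(C,\mathcal{E}_{s,\mathsf{s}})$. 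Hence $N_sS^{\mu}\cong H^1(C,\mathcal{E}_{s,\mathsf{s}})$. Parabolic Riemann--Roch (equivalently, orbifold Riemann--Roch for $\mathcal{W}_{s,\mathsf{s}}$ on $Y$ followed by taking $\Gamma$-invariants) gives
\[ \dim H^1(C,\mathcal{E}_{s,\mathsf{s}}) = \dim H^0(C,\mathcal{E}_{s,\mathsf{s}}) - \text{deg}\,\mathcal{E}_{s,\mathsf{s}} + \text{rank}(\mathcal{E}_{s,\mathsf{s}})(g-1), \]
and since $\text{pardeg}\,\mathcal{E}_{s,\mathsf{s}}\leq 0$ (resp.\ $<0$) the same weight-bookkeeping used in the parabolic vector bundle corollary yields $-\text{deg}\,\mathcal{E}_{s,\mathsf{s}}\geq \text{deg}\,D/K$ (resp.\ strict). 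Putting the pieces together over each stratum and taking the minimum over the finitely many $\mu$ gives the stated bounds.

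The step I expect to be the main obstacle is the first one: rigorously producing the canonical parabolic reduction in the \emph{parabolic} symplectic/orthogonal setting and checking that its $\Gamma$-equivariance under the orbifold correspondence yields precisely an isotropic subbundle with the sharp sign on $\text{pardeg}\,\mathcal{E}_{s,\mathsf{s}}$. In particular, one must verify that the strict inequality $\text{pardeg}<0$ for the unsemistable case transfers cleanly through the identification $N_sS^{\mu}\cong H^1(C,\mathcal{E}_{s,\mathsf{s}})$ to produce the strict codimension bound; this is structurally analogous to the principal $G$-bundle case treated in \cite{KN97}, but requires tracking the parabolic weights induced on $\mathsf{g}/\mathsf{p}$ under the orbifold dictionary.
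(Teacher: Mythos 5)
Your proposal is correct and follows essentially the same route as the paper, which proves this corollary only by invoking "the similar method" of the preceding subsections: stratify $S^{us}$ (resp.\ $S^{uss}$) by the type of the canonical ($\Gamma$-equivariant, via the orbifold correspondence) parabolic reduction, identify $N_sS^{\mu}\cong H^1(C,\mathcal{E}_{s,\mathsf{s}})$ from the exact sequence for $Q_{PS}$ together with surjectivity of the Kodaira--Spencer map, and bound its dimension by Riemann--Roch plus the inequality $-\deg\mathcal{E}_{s,\mathsf{s}}\geq \deg D/K$ coming from $\mathrm{pardeg}\,\mathcal{E}_{s,\mathsf{s}}\leq 0$ (strict in the unsemistable case). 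Your write-up merely makes explicit the steps the paper leaves implicit, so there is no substantive difference in approach.
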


\section{Infinite Grassmannians and the theta line bundle}\label{Inf Gr}

\subsection{Infinite Grassmannians}

In this subsection, we use $G$ to denote a connected simply connected simple affine algebraic group, and the parabolic $G$ bundle over $C$ we considered in this subsection is given by a principal $G$ bundle $E$ together with choices of one parameter subgroups in $E(G)_x$ for every $x\in D$; a quasi-parabolic $G$ bundle is just a choice of choices of parabolic subgroups of $E(G)_x$, i.e. (quasi-)parabolic $G$ bundles in the sense of \cite{BR89}.

We fix a point $p\in X$, away from $D$, let $C^*=C-p$, following \cite{KNR94}, we define
\begin{align*}
\mathcal{G}&=G(\hat{\mathbb{C}}_p)\\
\mathcal{P}&=G(\hat{\mathcal{O}}_p)\\
\Lambda &=G(\mathbb{C}[C^*])
\end{align*}
where $\hat{\mathcal{O}}_p$ is the completion of local ring $\mathcal{O}_p$ of $p\in C$; $\hat{\mathbb{C}}_p$ is the field of quotient of $\hat{\mathcal{O}}_p$; $\mathbb{C}[C^*]$ is the coordinate ring of $C^*$. Similarly in \cite{KNR94}, we have

\begin{prop}\label{bijection}
If we use $\mathcal{X}$ to denote the set of isomorphism classes of quasi-parabolic $G$ bundle with parabolic structure $P_x$ at each $x\in D$, we have a bijection of sets:
$$\alpha: \Lambda\setminus (\mathcal{G}/\mathcal{P}\times \prod_{x\in D}G/P_{x})\longrightarrow \mathcal{X}$$

\end{prop}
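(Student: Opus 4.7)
The strategy is to extend the classical Kumar--Narasimhan--Ramanathan uniformization $\Lambda\backslash \mathcal{G}/\mathcal{P} \cong \{G\text{-bundles on }C\}$ by recording the quasi-parabolic structure as an extra piece of gluing data that becomes visible once we trivialize the bundle on $C^*$. The three geometric inputs I will use are: (i) every $G$-bundle on the affine curve $C^*$ is trivial (Harder's theorem, since $G$ is connected simply connected semisimple); (ii) every $G$-bundle on $\mathrm{Spec}(\hat{\mathcal{O}}_p)$ is trivial; and (iii) $D \subset C^*$ because we chose $p \notin D$, so a trivialization on $C^*$ makes sense at every parabolic point.

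First I would construct the map $\alpha$. Given a representative $(\gamma,(g_x)_{x\in D})$ with $\gamma\in\mathcal{G}$ and $g_x\in G$, glue the trivial $G$-bundle on $C^*$ with the trivial $G$-bundle on $\mathrm{Spec}(\hat{\mathcal{O}}_p)$ using $\gamma$ as transition function over the punctured formal disk around $p$, exactly as in \cite{KNR94}. Using the trivialization on $C^*$ to identify the fiber of $E(G)$ at $x\in D$ with $G$, declare the quasi-parabolic structure at $x$ to be the image of $g_x P_x$ in $E(G)_x/P_x$. This assignment is well-defined modulo right multiplication of $\gamma$ by $\mathcal{P}$, which only changes the trivialization on the formal disk and therefore leaves the trivialization on $C^*$ (and the induced flags at $D$) untouched; well-definedness modulo $\prod_x P_x$ is built into the quotients $G/P_x$.

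Next I would check that $\alpha$ descends to the $\Lambda$-quotient and is injective. The group $\Lambda = G(\mathbb{C}[C^*])$ acts diagonally: by left multiplication on $\mathcal{G}/\mathcal{P}$ (via the inclusion $\Lambda \hookrightarrow \mathcal{G}$) and, via evaluation $\mathrm{ev}_x:\Lambda\to G$ at each $x\in D\subset C^*$, by left multiplication on $G/P_x$. Acting by $\lambda\in\Lambda$ amounts to changing the trivialization on $C^*$ by $\lambda$: this replaces $\gamma$ by $\lambda\gamma$ and carries the parabolic structure at $x$ from $g_xP_x$ to $\mathrm{ev}_x(\lambda)g_xP_x$, so the resulting quasi-parabolic bundles are canonically isomorphic. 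Conversely, any isomorphism between two quasi-parabolic bundles produced from $(\gamma,(g_x))$ and $(\gamma',(g_x'))$ pulls back to a change of trivialization on $C^*$ by some $\lambda\in\Lambda$ and, after matching the trivializations on the formal disk, to a right multiplication of $\gamma$ by an element of $\mathcal{P}$; hence the two data lie in the same $\Lambda$-orbit on $\mathcal{G}/\mathcal{P}\times\prod G/P_x$.

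For surjectivity, given any quasi-parabolic $G$-bundle $(E,\{\sigma_x\}_{x\in D})$ with $\sigma_x\in E(G)_x/P_x$, I trivialize $E$ on $C^*$ and on $\mathrm{Spec}(\hat{\mathcal{O}}_p)$ using (i) and (ii) above, obtaining a transition function $\gamma\in\mathcal{G}$; the $C^*$-trivialization simultaneously converts each $\sigma_x$ into a point $g_xP_x\in G/P_x$, and $\alpha$ sends the class of $(\gamma,(g_x))$ back to $(E,\{\sigma_x\})$. The main obstacle is the careful bookkeeping in the preceding paragraph verifying that the two equivalence relations agree, namely the $\Lambda$-action on the data versus genuine isomorphism of quasi-parabolic $G$-bundles; the underlying geometric input (triviality of $G$-bundles on $C^*$ and on the formal disk) is already subsumed in the classical KNR argument and can be quoted directly.
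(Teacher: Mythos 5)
Your proof is correct and follows essentially the same route as the paper, which simply cites Proposition 1.5 of \cite{KNR94} for the bijection $\Lambda\setminus\mathcal{G}/\mathcal{P}\leftrightarrow\{G\text{-bundles}\}$ and then observes that the quasi-parabolic data at the points of $D\subset C^*$ is recorded by $\prod_{x\in D}G/P_x$ via the trivialization on $C^*$, with the $\Lambda$-action accounting for the change of that trivialization. You have merely spelled out in detail (well-definedness modulo $\mathcal{P}$, the evaluation action of $\Lambda$ at $x\in D$, injectivity and surjectivity) what the paper leaves implicit, so no gap.
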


\begin{proof}

By proposition 1.5 of \cite{KNR94}, there is a bijection between $\Lambda\setminus \mathcal{G}/\mathcal{P}$ and the set of isomorphism classes of $G$ bundles. Notice that a quasi-parabolic $G$ bundle is nothing but a parabolic $G$ bundle plus a point in $\prod_{x\in D}G/P_{x}$, we have our bijection.
\end{proof}

Recall that in \cite{KNR94} the generalized flag variety $X:=\mathcal{G}/\mathcal{P}$ has a structure of ind-variety, more precisely, $$X=\lim_{\rightarrow}X_{\sigma}$$
where $X_{\sigma}$ are the generalised Schubert varieties they defined there. Moreover, there is an algebraic $G$ bundle $\mathcal{U}\rightarrow C\times X$ such that $\mathcal{U}|_{C^{*}\times X}$ is trivial. So, for any $x\in D$, we have a trivial $G$ bundle $\mathcal{U}_x$ over $X$, then we define $$X_P=X\times \prod_{x\in D}G/P_{x}$$ to be the relative flag variety over $X$ defined by $\{\mathcal{U}_x\}_{x\in D}$. Let $\pi:X_P\longrightarrow X$ be the natural projection.

\begin{prop}

There is a quasi-parabolic $G$ bundle $\mathcal{U}_P$ over $C\times X_P$ such that for any $x\in X_P$, the quasi-parabolic bundle $(\mathcal{U}_P)_x:=\mathcal{U}_P|_{C\times x}$ is exactly the parabolic $G$ bundle corresponds to $x$ though the bijection in Proposition \ref{bijection}. Moreover the bundle $\mathcal{U}|_{C^*\times X_P}$ carries a trivialization $\epsilon: \tau\rightarrow \mathcal{U}|_{C^*\times X_P} $ where $\tau$ is a trivial quasi-parabolic $G$ bundle over $C^*\times X_P$.

For any scheme $T$ and any family of parabolic $G$ bundle $\mathcal{F}$ over $C\times T$, if $\mathcal{F}|_{C^*\times T}$ and $\mathcal{F}|_{Spec\hat{\mathcal{O}}_p\times T}$ are both trivial. Then if we choose a trivialization $\varepsilon: \tau'\rightarrow \mathcal{F}|_{C^*\times T}$, we would have a Schubert variety $X_{\sigma}$, and a morphism $f:T\rightarrow X_{\sigma}\times \prod_{x\in D}G/P_{x}$ such that $\varepsilon$ is exactly the trivialization pulled back from $\epsilon$ by $f$.

\end{prop}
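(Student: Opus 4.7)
The strategy is to bootstrap from the universal bundle $\mathcal{U}$ on $C \times X$ produced in \cite{KNR94}, pulling it back to $C \times X_P$ and using the triviality of each $\mathcal{U}_x$ to read the parabolic structure off the flag factors. Concretely, with $\pi: X_P \to X$ the projection, I would set $\mathcal{U}_P := (\mathrm{id}_C \times \pi)^* \mathcal{U}$ as the underlying principal bundle. Since $p \notin D$, each $x \in D$ lies in $C^*$, so the trivialization of $\mathcal{U}|_{C^* \times X}$ restricts to an identification $\mathcal{U}_x \simeq G \times X$. Pulling this back along $\pi$ and composing with the factor projection $X_P \to G/P_x$ gives a reduction of structure group of $(\mathcal{U}_P)|_{\{x\} \times X_P}$ to $P_x$; these reductions are the required quasi-parabolic structure, and $\epsilon$ is defined as the pullback of the trivialization of $\mathcal{U}|_{C^* \times X}$ along $\mathrm{id}_{C^*} \times \pi$.

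Then I would check that over a closed point $z = (y,(g_x P_x)_{x \in D}) \in X_P$ the fiber $(\mathcal{U}_P)_z$ is precisely the quasi-parabolic $G$-bundle that $\alpha$ of Proposition \ref{bijection} associates to $z$. The underlying $G$-bundle at $y$ agrees with the one classified by $y$ in the bijection $\Lambda \setminus \mathcal{G} / \mathcal{P} \to \{G\text{-bundles}\}$ of \cite{KNR94}, and the reduction at $x$ is exactly $g_x P_x$ by the way $X_P$ was built from the trivial bundles $\mathcal{U}_x$; this identification is tautological once unwound.

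For the universal property, let $\mathcal{F}$ be a family of parabolic $G$-bundles over $C \times T$ with the two given trivializations. Applying the universal property of $\mathcal{U}$ proved in \cite{KNR94} to the underlying principal $G$-bundle of $\mathcal{F}$ equipped with $\varepsilon$, I obtain a Schubert subvariety $X_\sigma \subset X$ and a morphism $f_0: T \to X_\sigma$ pulling back $\mathcal{U}$ together with its canonical trivialization on $C^* \times X$ to the underlying bundle of $\mathcal{F}$ together with $\varepsilon$. Because $D \subset C^*$, the trivialization $\varepsilon$ restricts at each $x \in D$ to a trivialization of $\mathcal{F}|_{\{x\} \times T}$; post-composing the parabolic reduction to $P_x$ with this trivialization produces a morphism $f_x: T \to G/P_x$. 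Assembling these yields the desired $f = (f_0,(f_x)_{x \in D}): T \to X_\sigma \times \prod_{x \in D} G/P_x$, and $f^*\epsilon = \varepsilon$ then holds at the $G$-bundle level by \cite{KNR94} and at the parabolic level by design of the $f_x$.

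The main obstacle is keeping track of two different trivializations in play --- the canonical one on $C^* \times X$ from \cite{KNR94}, used to define the parabolic structure of $\mathcal{U}_P$, and the given $\varepsilon$, used to produce the flag-factor maps $f_x$. Unwinding the definitions, both reductions at each $x \in D$ are described, after trivializing $\mathcal{F}|_{\{x\} \times T}$ via $\varepsilon$, by the same coset in $G/P_x$, which matches them. Everything else reduces formally to the universal property of the infinite Grassmannian $X$ already established in \cite{KNR94}.
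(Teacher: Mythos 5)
Your proposal is correct and follows essentially the same route as the paper: take $\mathcal{U}_P=\pi^*\mathcal{U}$ with the tautological quasi-parabolic structure coming from the flag factors (equivalently, from the trivializations of the $\mathcal{U}_x$, $x\in D\subset C^*$), then obtain $f$ by applying Proposition 2.8 of \cite{KNR94} to the underlying bundle and reading the maps $T\rightarrow G/P_x$ off the trivialization $\varepsilon$ restricted to $\{x\}\times T$. The paper's proof is only a sketch of this argument, so your additional checks (fiberwise identification with $\alpha$, and the matching of the two trivializations at points of $D$) simply fill in details the paper leaves implicit.
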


\begin{proof}

This is just a parabolic analogy of proposition 2.8 in \cite{KNR94}.
The quasi-parabolic $G$ bundle $\mathcal{U}_P$ is given by $\pi^*\mathcal{U}$ with quasi-parabolic structure determined by universal property of flag variety.

To see the existence of the morphism $f$, we firstly observe that by proposition 2.8 in \cite{KNR94}, we have a morphism $f^{\prime}:T\longrightarrow X$. Now since $\mathcal{F}|_{C^*\times T}$is trivial, we would have a point in $\prod_{x\in D}G/P_{x}$ determined by this trivial parabolic $G$ bundle.
\end{proof}

\begin{cor}

There is an open subset $X_P^{ss}\subset X_P$ and a morphism $\phi: X_P^{ss}\longrightarrow M_{G,P}$ to the moduli space of semistable $G$ bundles.

\end{cor}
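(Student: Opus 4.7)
The plan is to define $X_P^{ss}$ fiberwise and then appeal to the universal property of the coarse moduli space $M_{G,P}$. First I would set $X_P^{ss}$ to be the locus of points $x \in X_P$ such that the parabolic $G$-bundle $(\mathcal{U}_P)_x$ is semistable in the sense of parabolic bundles over $C$. Since $X = \varinjlim X_\sigma$ is an ind-variety, $X_P = \varinjlim (X_\sigma \times \prod_{x\in D} G/P_x)$ is a direct limit of finite-type schemes, and it suffices to verify openness on each Schubert level $X_{\sigma,P} := X_\sigma \times \prod_{x\in D} G/P_x$.

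For openness, I would restrict the universal parabolic $G$-bundle $\mathcal{U}_P$ to the noetherian piece $C \times X_{\sigma,P}$ and invoke the standard fact that semistability is an open condition in flat families of parabolic $G$-bundles. The cleanest way to obtain this is via the orbifold correspondence (Proposition \ref{symplectic orifold corr} and its $G$-bundle analogue): the family corresponds to a flat family of orbifold $G$-bundles on $Y$, for which openness of semistability is the classical result of Ramanathan (or, equivalently, one may reduce to the parabolic vector bundle case after composing with a faithful representation $G\hookrightarrow GL(r)$, since semistability of parabolic $G$-bundles is open). The open loci $X_{\sigma,P}^{ss}$ are compatible under the closed inclusions $X_\sigma \hookrightarrow X_{\sigma'}$, so they glue to an ind-open subset $X_P^{ss}\subseteq X_P$.

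Second, to construct the morphism, I would use that $M_{G,P}$ corepresents the moduli functor of semistable parabolic $G$-bundles. On each $X_{\sigma,P}^{ss}$, the restriction $\mathcal{U}_P|_{C\times X_{\sigma,P}^{ss}}$ is a flat family of semistable parabolic $G$-bundles over a finite-type scheme, hence by the coarse moduli property there is a canonical classifying morphism $\phi_\sigma: X_{\sigma,P}^{ss} \to M_{G,P}$. Because $\mathcal{U}_P$ is a single coherent object on the ind-scheme, the restrictions to different Schubert levels agree after pullback, and by functoriality the $\phi_\sigma$ are compatible with the transition inclusions $X_{\sigma,P}^{ss}\hookrightarrow X_{\sigma',P}^{ss}$; they therefore glue to the desired morphism $\phi: X_P^{ss} \to M_{G,P}$.

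The main point requiring care will be openness of semistability: parabolic $G$-bundles do not admit a Quot-scheme construction in the na\"ive way, but the orbifold correspondence on $Y$ avoids this by translating the question into a classical one about ordinary $G$-bundles over the smooth covering curve. Once openness is in hand, the morphism is a formal consequence of the coarse-moduli universal property and causes no further issue.
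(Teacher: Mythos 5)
Your argument is correct and is essentially the intended one: the paper states this corollary without proof, and the implicit justification (as in \cite{KNR94}) is exactly what you write — restrict the universal family $\mathcal{U}_P$ to each finite-dimensional Schubert level, use openness of semistability in flat families to define $X_P^{ss}$, and then obtain $\phi$ from the coarse moduli property of $M_{G,P}$, with compatibility over the ind-structure giving the glued morphism. Your additional care about openness via the orbifold correspondence is a reasonable way to supply the standard fact the paper takes for granted.
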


By proposition \ref{bijection}, for any point $m\in M_{G,P}$, the fibre $\phi^{-1}(m)$ is a union of certain $\Lambda$-orbits. Next, we analyse the closure of these orbits.

\begin{lem}\label{S equivalence}

Let $E$ be a semistable parabolic $G$ bundle on $C$ and we consider $gr(E)$ defined in Proposition 3.1 of \cite{BR89}. Then there exists a family of parabolic $G$ bundle $\mathcal{E}$ on $C\times \mathbb{A}^1$ such that:
\begin{enumerate}
\item[(a)] $\mathcal{E}|_{C\times (\mathbb{A}^1\setminus \{0\})}\cong p_C^*(E)$ , $\mathcal{E}|_{C\times \{0\}}\cong gr(E)$ and

\item[(b)] $\mathcal{E}|_{C^*\times \mathbb{A}^1}$ and $\mathcal{E}|_{Spec \hat{\mathcal{O}}_p \times \mathbb{A}^1}$ are both trivial.
\end{enumerate}
Where $p_C$ is the projection from $C\times \mathbb{A}^1$ to $C$.

\end{lem}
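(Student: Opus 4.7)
The plan is to realize $\mathcal{E}$ as a Rees-type degeneration associated to an admissible reduction of $E$ to a parabolic subgroup, twisted by a cocharacter of the center of its Levi. By Proposition~3.1 of \cite{BR89}, the semistable parabolic $G$-bundle $E$ admits a reduction $E_P$ to a parabolic subgroup $P \subset G$, compatible with the parabolic data at $D$, such that the induced Levi bundle $E_L = E_P \times_P L$ is stable and $gr(E) \cong E_L \times_L G$. First I would fix such $E_P$ and choose a cocharacter $\lambda : \mathbb{G}_m \to Z(L)$ that pairs strictly positively with every root of $\mathrm{Lie}(U_P)$; the conjugation morphism $(t,p) \mapsto \lambda(t)\,p\,\lambda(t)^{-1}$ from $\mathbb{G}_m \times P \to P$ then extends uniquely to a morphism $\mathrm{Ad}_\lambda : \mathbb{A}^1 \times P \to P$ whose restriction at $t=0$ is the natural retraction $P \twoheadrightarrow L \hookrightarrow P$.

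Next, fix an affine open cover of $C$ over which $E_P$ admits $P$-valued cocycles $\{g_{ij}\}$, and define twisted cocycles $g^\lambda_{ij}(x,t) := \mathrm{Ad}_\lambda(t, g_{ij}(x))$. These are regular on $(U_i \cap U_j)\times \mathbb{A}^1$ by the positivity of $\lambda$ on the roots of $U_P$, and still satisfy the cocycle identity, so they define a $P$-bundle $\mathcal{E}_P$ on $C \times \mathbb{A}^1$. For $t \neq 0$ the twist is inner conjugation by $\lambda(t)$, so $\mathcal{E}_P|_t \cong E_P$ as $P$-bundles; at $t = 0$ the cocycle values lie in $L$, so $\mathcal{E}_P|_0$ is the $P$-bundle induced from $E_L$ along $L \hookrightarrow P$. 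Setting $\mathcal{E} := \mathcal{E}_P \times_P G$, the one-parameter subgroup at each $x \in D$ defining the parabolic structure lies in $E_P(P)_x$ by compatibility of the reduction and transports along the family via the twist, specializing at $t=0$ to the parabolic structure on $gr(E)$. This gives condition~(a).

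For (b), since $G$ is connected, simply connected and simple, every $G$-bundle on the affine curve $C^*$ (resp.\ on $\mathrm{Spec}\,\hat{\mathcal{O}}_p$) is trivial by Harder's theorem (resp.\ Steinberg's theorem); in particular $E$ and $gr(E)$ are trivial on both opens. Over $C^* \times \mathbb{G}_m$ the family $\mathcal{E}$ is isomorphic to $p_C^* E|_{C^*}$, which is trivial, and the analogous statement holds on $\mathrm{Spec}\,\hat{\mathcal{O}}_p \times \mathbb{G}_m$. To extend the trivializations across $t = 0$ one uses the explicit form of the Rees cocycle $g^\lambda_{ij}$ together with the vanishing of the relevant $H^1$ of the adjoint bundle on $\mathbb{A}^1$, and the simply-connectedness of $G$.

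The hardest part will be item~(b): pointwise triviality of the fibers does not a priori imply triviality of the family, so producing a trivialization compatible across $t=0$ requires exploiting the specific form of the twisted cocycle. Some additional bookkeeping is also needed in Step~1 to ensure that the admissible reduction of \cite{BR89} can be chosen compatibly with the one-parameter subgroups encoding the parabolic data at $D$, so that the resulting family indeed carries a well-defined parabolic structure at every $t \in \mathbb{A}^1$.
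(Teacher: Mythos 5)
Your Rees-type construction for part (a) is sound and is in the spirit of what the paper intends (its own ``proof'' is only the remark that the statement is the parabolic analogue of Proposition 3.7 of \cite{KNR94}): the admissible reduction $E_P$ from Proposition 3.1 of \cite{BR89}, a central cocharacter $\lambda$ of the Levi that is positive on the roots of $U_P$, and the extended conjugation $\mathbb{A}^1\times P\to P$ do produce a family with $\mathcal{E}_t\cong E$ for $t\neq 0$ and $\mathcal{E}_0\cong gr(E)$, and the parabolic data at $D$ can be carried along because the admissible reduction is compatible with it (this bookkeeping, which you defer, is routine).

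The genuine gap is item (b), and you have not given an argument for it -- you only name the difficulty. Pointwise triviality of the fibres (Harder/Steinberg) does not trivialize the family, and ``vanishing of the relevant $H^1$ of the adjoint bundle on $\mathbb{A}^1$'' is not a proof: the object to be trivialized is a $G$-torsor on the surface $C^*\times\mathbb{A}^1$ (resp.\ on $\mathrm{Spec}\,\hat{\mathcal{O}}_p\times\mathbb{A}^1$), not on $\mathbb{A}^1$. Moreover your cocycle presentation is only as trivializable as the $P$-bundle $E_P|_{C^*}$: its Levi quotient has a central torus, so it carries line bundles (the determinants of the graded pieces) which need not be trivial on $C^*$ when $g\geq 1$; hence one cannot simply take $C^*$ as a single chart for the Rees cocycle. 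To close the gap you must either (i) prove that the restriction of the family to $C^*\times\mathbb{A}^1$ is extended from $C^*\times\{0\}$ (a Bass--Quillen/Raghunathan--Ramanathan type homotopy-invariance statement for torsors, and its analogue over $\hat{\mathcal{O}}_p[t]$), which then reduces (b) to triviality of $gr(E)$ on $C^*$ and on the formal disc, or (ii) follow the route of \cite{KNR94}: fix trivializations of $E$ over $C^*$ and over $\mathrm{Spec}\,\hat{\mathcal{O}}_p$ first, encode $E$ by a transition element of $G(\hat{\mathbb{C}}_p)$, and build the degeneration by multiplying this element by $\lambda(t)$ inside the loop group, so that the trivializations required in (b) hold by construction and the only thing to check is that the $t=0$ limit is $gr(E)$ with its parabolic structure. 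As written, the proposal establishes (a) but not (b), and (b) is exactly what the lemma is used for (to run the degeneration inside $X_P$ via the classifying-map property), so the proof is incomplete.
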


\begin{proof}

This is the parabolic analogy of Proposition 3.7 of \cite{KNR94}. Proof is similar and we omit the it here.
\end{proof}

Now we have the following:

\begin{prop}\label{injection}

The morphism $\phi^{*}: \text{Pic}(M_{G,P})\longrightarrow \text{Pic}(X_P^{ss})$ is injective.

\end{prop}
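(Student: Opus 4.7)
The plan is to realize $\phi$ as a quotient-like map for the $\Lambda=G(\mathbb{C}[C^{*}])$-action on $X_P^{ss}$ and then reduce injectivity of $\phi^{*}$ to the absence of nontrivial characters on $\Lambda$. Concretely, I will first establish (i) surjectivity of $\phi$ and (ii) connectedness of its fibers, and then run a character-theoretic descent.

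For surjectivity, any semistable parabolic $G$-bundle $E$ has underlying $G$-bundle trivial over $C^{*}$ (by the Drinfeld--Simpson theorem, using that $G$ is simply connected simple) and trivial over $\operatorname{Spec}\widehat{\mathcal{O}}_{p}$; picking such trivializations together with the parabolic data at each $x\in D$ produces a point of $X_{P}^{ss}$ mapping to $[E]$. For connectedness, Proposition~\ref{bijection} identifies $\phi^{-1}([E])$ with the union of $\Lambda$-orbits indexed by the isomorphism classes of parabolic $G$-bundles in the S-equivalence class of $E$. Each individual orbit is connected since $\Lambda$ is a connected ind-group. To join the different orbits I would apply Lemma~\ref{S equivalence}: for every $E'$ in the S-equivalence class of $E$, the family $\mathcal{E}$ on $C\times\mathbb{A}^{1}$ has trivial restrictions to $C^{*}\times\mathbb{A}^{1}$ and to $\operatorname{Spec}\widehat{\mathcal{O}}_{p}\times\mathbb{A}^{1}$, so the universal property of $X_{P}$ produces a morphism $\mathbb{A}^{1}\to X_{P}^{ss}$ sending $t\neq 0$ into the $\Lambda$-orbit of $E'$ and $t=0$ into the $\Lambda$-orbit of $\operatorname{gr}(E')=\operatorname{gr}(E)$. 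Every orbit in $\phi^{-1}([E])$ is thereby connected to the single orbit of $\operatorname{gr}(E)$, so the fiber is connected.

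Now suppose $L\in\operatorname{Pic}(M_{G,P})$ satisfies $\phi^{*}L\cong \mathcal{O}_{X_{P}^{ss}}$. Since $\phi$ is $\Lambda$-invariant, $\phi^{*}L$ carries a canonical $\Lambda$-linearization; transporting it along the given trivialization produces a $\Lambda$-linearization of $\mathcal{O}_{X_{P}^{ss}}$, equivalently a character $\chi\colon \Lambda\to\mathbb{G}_{m}$. Since $G$ is simply connected and simple one has $[G,G]=G$, and this forces $\operatorname{Hom}(\Lambda,\mathbb{G}_{m})=0$, so $\chi$ is trivial. Hence the isomorphism $\phi^{*}L\cong\mathcal{O}_{X_{P}^{ss}}$ is genuinely $\Lambda$-equivariant; combined with surjectivity of $\phi$, connectedness of its fibers, and normality of $M_{G,P}$, this equivariant trivialization descends to a trivialization of $L$, giving $L\cong\mathcal{O}_{M_{G,P}}$.

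The main obstacle is the last descent step, since $\phi$ is not proper and $X_{P}^{ss}$ is only an ind-variety. I would handle it pointwise: the transported nowhere-vanishing section of $\mathcal{O}_{X_{P}^{ss}}$ is $\Lambda$-invariant and therefore constant on each $\Lambda$-orbit, and by the $\mathbb{A}^{1}$-family of Lemma~\ref{S equivalence} it must take the same value on any two orbits joined by such a family. Consequently it is constant on each connected fiber of $\phi$ and descends to a nowhere-vanishing function on the image of $\phi$, which by surjectivity is all of $M_{G,P}$; regularity of the descended section follows from normality of $M_{G,P}$.
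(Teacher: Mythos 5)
Your proposal is correct and follows essentially the same route as the paper: the paper also deduces injectivity from the fact (via Lemma \ref{S equivalence}) that each fibre of $\phi$ is a union of $\Lambda$-orbits whose closures meet in the orbit of $gr(E)$, and then invokes the argument of Lemma 2.1 of \cite{KN97}, which is precisely the descent you spell out using connectedness of fibres and the absence of nontrivial characters of $\Lambda=G(\mathbb{C}[C^*])$. Your write-up merely makes explicit the steps the paper delegates to \cite{KN97}.
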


\begin{proof}

By lemma \ref{S equivalence} we know that the fibre $\phi^{-1}(m)$ for any $m\in M_{G,P}$ is a disjoint of $\Lambda$ orbits, and the closure of these orbits intersect with each other. Thus by a similar argument in the proof of lemma 2.1 in \cite{KN97}, we have our injection.
\end{proof}

\subsection{The theta line bundle and the canonical line bundle of $M_{G,P}$}

In this subsection, we fix $$l:=\dfrac{1}{r}(K \chi -\sum_{x\in D}\sum_{i=1}^{2l_x}d_i(x)r_i(x))$$ to be an integer, and $D_l:=\sum_ql_qz_q$ to be an effective divisor of degree $l$ on $C$.

Given a scheme $S$ and a flat family of parabolic principal $G$ bundle $\mathcal{F}$ over $S\times C$ with parabolic type $\big(\{\overrightarrow{n}(x)\}_{x\in D}, \{\overrightarrow{a}(x)\}_{x\in D}\big)$, assuming that for each $x\in D$, the filtration is given by $$0=F_{2l_x+1}(\mathcal{F}_{S\times \{x\}})\subseteq \cdots F_{l_x+1}(\mathcal{F}_{S\times \{x\}})\subseteq F_{l_x}(\mathcal{F}_{S\times \{x\}})\subseteq \cdots \subseteq F_0(\mathcal{F}_{S\times \{x\}})=\mathcal{F}_{S\times \{x\}}$$ which is equivalent to $$\mathcal{F}_{S\times \{x\}}=Q_{2l_x+1}(\mathcal{F}_{S\times \{x\}})\twoheadrightarrow \cdots \twoheadrightarrow Q_{l_x+1}(\mathcal{F}_{S\times \{x\}})\twoheadrightarrow Q_{l_x}(\mathcal{F}_{S\times \{x\}})\twoheadrightarrow \cdots \twoheadrightarrow Q_0(\mathcal{F}_{S\times \{x\}})=0$$ then we can define a line bundle $\Theta_{\mathcal{F},D_l}$ on $S$ by $$\Theta_{\mathcal{F},D_l}:=(\text{det}R\pi_S\mathcal{F})^{-K}\otimes \bigotimes_{x\in D}\big\{\bigotimes_{i=1}^{2l_x}\text{det}\big(Q_{i}(\mathcal{F}_{S\times \{x\}})\big)^{d_{i}(x)}\big\}\otimes \bigotimes_{q}\text{det}(\mathcal{F}_{S\times \{z_q\}})^{l_q}$$ where $\pi_S: S\times C\rightarrow S$ is the projection and $\text{det}R_{\pi_S}\mathcal{F}$ is the determinant of cohomology: $\{\text{det}R_{\pi_S}\mathcal{F}\}_t=\text{det}\text{H}^0(C,\mathcal{F}_t)\otimes \text{det}\text{H}^1(C,\mathcal{F}_t)^{-1}$. Notice that $$\text{det}\big(Q_{i}(\mathcal{F}_{S\times \{x\}})\big)\cong \text{det}\big(Q_{2l_x+1-i}(\mathcal{F}_{S\times \{x\}})\big)$$ for $1\leq i \leq l_x$.

It is clear that for any morphism $f:T\rightarrow S$, we have $f^*\Theta_{\mathcal{F},D_l}=\Theta_{f_C^*\mathcal{F},D_l}$, where $f_C:T\times C\rightarrow S\times C$ is the base change of $f$. Moreover, we have:

\begin{thm}\label{theta line bundle}

There is a unique ample line bundle $\Theta_{D_l}$ over the moduli space $M_{G,P}$, such that:

\begin{enumerate}

\item[(1)] For any scheme $S$ and any family of semistable parabolic $G$ bundle $\mathcal{F}$ over $S\times C$, let $\phi_{\mathcal{F}}:S\rightarrow M_{G,P}$ be the induced map, then we have $$\phi_{\mathcal{F}}^*\Theta_{D_l}=\Theta_{\mathcal{F},D_l}.$$

\item[(2)] Let $D_l$ and $D_l^{\prime}$ be two different effective divisor of degree $l$ on $C$, then $\Theta_{D_l}$ and $\Theta_{D_l^{\prime}}$ are algebraically equivalent.

\end{enumerate}

\end{thm}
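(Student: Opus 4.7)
The plan is to construct $\Theta_{D_l}$ by descent from the parameter space $\mathcal{R}^{ss}$ used in the GIT construction of Section \ref{construction}. The universal quotient on $\mathcal{R}^{ss}\times C$ gives a universal flat family $\mathcal{E}$ of semistable parabolic symplectic/orthogonal bundles carrying its universal isotropic flags; applying the defining recipe to $\mathcal{E}$ produces an $SL(V)$-equivariant line bundle $\Theta_{\mathcal{E},D_l}$ on $\mathcal{R}^{ss}$, and I would define $\Theta_{D_l}$ as its descent along the good quotient $\mathcal{R}^{ss}\to M_{G,P}$. The key step is Kempf's descent criterion: for every closed $SL(V)$-orbit in $\mathcal{R}^{ss}$, the stabilizer must act trivially on the fiber of $\Theta_{\mathcal{E},D_l}$. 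Since the stabilizer of a polystable point acts on the fiber through a character, this reduces to a numerical check on central elements; the equality $rl+\sum_{x,i}d_i(x)r_i(x)=K\chi$ defining $l$ is precisely what forces the weights on the three tensor factors to sum to zero, and the extra $\pm 1$ arising from the symplectic/orthogonal form is absorbed by the parity of $r$ together with the self-duality relations on the $d_i(x)$.

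The universal property (1) then follows by reducing any family $\mathcal{F}$ on $S\times C$ to the universal case: the frame bundle $\tilde S\to S$ associated to $\pi_{S*}\mathcal{F}(N)$ (for $N$ large) admits an $SL(V)$-equivariant morphism $\tilde S\to \mathcal{R}^{ss}$ fitting into a Cartesian diagram with $\phi_{\mathcal{F}}$, and functoriality of the determinant of cohomology and of the fiberwise flag determinants gives $\phi_{\mathcal{F}}^*\Theta_{D_l}=\Theta_{\mathcal{F},D_l}$ first on $\tilde S$ and then on $S$ by equivariant descent. Uniqueness of $\Theta_{D_l}$ is forced by applying (1) to $\mathcal{E}$ itself, combined with faithful flatness of $\mathcal{R}^{ss}\to M_{G,P}$.

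For ampleness I would compare $\Theta_{\mathcal{E},D_l}$ with the restriction to $\mathcal{R}^{ss}$ of the GIT polarization $n_1\times 1\times \prod_{x,i}d_i(x)$ used in Section \ref{construction}: the Pl\"ucker bundle on $\mathrm{Grass}_{P(m)}(V\otimes W_m)$ pulls back, up to a fixed twist, to $(\det R\pi_S\mathcal{E})^{-1}$, while the flag factor pulls back to $\bigotimes_{x,i}(\det Q_i(\mathcal{E}_x))^{d_i(x)}$. The residual twist by $\bigotimes_q \det(\mathcal{E}_{z_q})^{l_q}$ is absorbed by rescaling and by the definition of $l$, so some positive power of $\Theta_{D_l}$ descends from the GIT linearization and is therefore ample on the normal projective variety $M_{G,P}$. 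To prove (2), I would vary $D_l$ in the symmetric product $C^{(l)}=\mathrm{Sym}^l(C)$: applied to the pullback of $\mathcal{E}$ to $(\mathcal{R}^{ss}\times C^{(l)})\times C$ with the universal divisor $\mathcal{D}\subset C\times C^{(l)}$, the construction produces an $SL(V)$-equivariant line bundle on $\mathcal{R}^{ss}\times C^{(l)}$ whose restriction to each slice $\mathcal{R}^{ss}\times\{D\}$ is $\Theta_{\mathcal{E},D}$, and its descent to $M_{G,P}\times C^{(l)}$ exhibits $\Theta_{D_l}$ and $\Theta_{D_l'}$ as fibers of a family parametrized by the connected variety $C^{(l)}$.

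The main obstacle is verifying Kempf's descent criterion at every polystable orbit, not merely at the generic stabilizer; for parabolic symplectic/orthogonal bundles the automorphism groups can be nontrivial reductive groups, and one must confirm that the weight cancellation above pins down the character of each such stabilizer to be trivial. If this failed at some level, one would have to replace $\Theta_{D_l}$ by a positive power, but the formula for $l$ is arranged exactly to avoid this.
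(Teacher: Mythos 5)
Your proposal takes essentially the same route as the paper: the paper also defines $\Theta_{D_l}$ as the descent of $\Theta_{\mathcal{E},D_l}$ from $\mathcal{R}^{ss}$ (citing the parabolic-bundle case of Sun--Zhou and Pauly for the descent criterion), obtains ampleness by identifying it, via the choice of $l$, with the descended GIT polarization, and gets algebraic equivalence of the $\Theta_{D_l}$ by the standard deformation argument over the space of divisors of degree $l$. The one point the paper singles out that your comparison with the linearization passes over silently is that the pullback to $\mathcal{R}^{ss}$ of the polarization factor on $\mathbb{P}\mathrm{Hom}(V\otimes V,H)$ is trivial; this is exactly what lets the restricted GIT polarization be matched with $\Theta_{\mathcal{E},D_l}$ and should be stated in your ampleness step.
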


\begin{proof}

$\Theta_{D_l}$ is the descent of $\Theta_{\mathcal{E},D_l}$ over $\mathcal{R}^{ss}$ for the universal parabolic symplectic/orthogonal bundle. The reason of descent of $\Theta_{\mathcal{E},D_l}$ is the same as the parabolic bundle case as in \cite{SZh18}, \cite{P96} once we see that the pull back of polarization over $\textbf{P}Hom(V\otimes V, H)$ to $\mathcal{R}^{ss}$ is trivial. Similarly we can show $\Theta_{D_l}$ is ample and for different choice of $D_l$, the theta line bundles are algebraically equivalent.
\end{proof}

For any parabolic $G$ bundle $E$, with parabolic structure $t_x\in G/P_x, \forall x \in D$, we define $\mathbf{D}_{E}$ to be  the space of infinitesimal deformation of $E$, i.e. the space of isomorphism classes of parabolic $G$ bundles $\tilde{E}$ on $C[\epsilon]$, such that $\tilde{E}|_{C}\cong E$, where $C[\epsilon]=C\times Spec(\mathbb{C}[\epsilon]/(\epsilon^2)) $.

\begin{prop}

There is an exact sequence: $$0\longrightarrow \prod_{x\in D}T_{t_x}(G/P_x)\stackrel{f}{\longrightarrow} \mathbf{D}_{E}\stackrel{g}{\longrightarrow} H^1(C, E(Ad))\longrightarrow 0$$ where $T_{t_x}(G/P_x)$ is the tangent space of $G/P_x$ at $t_x$.

\end{prop}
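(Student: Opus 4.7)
The plan is to realize the sequence as the tangent sequence of the forgetful morphism $\pi:\mathbf{Bun}_G^{par}(C)\to\mathbf{Bun}_G(C)$, which is a (locally trivial) fibration with fibre $\prod_{x\in D}E(G)_x/P_x$, non-canonically identified with $\prod_{x\in D}G/P_x$. Under this viewpoint, $\mathbf{D}_E$ is the infinitesimal deformation space at $(E,(t_x))$, the target $H^1(C,E(Ad))$ is the Kodaira--Spencer space of the underlying principal bundle, and $\prod_{x\in D}T_{t_x}(G/P_x)$ is the tangent to the fibre at $(t_x)$.

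First I would define the two maps explicitly. The map $g$ is forgetful: given a parabolic deformation $\tilde E$, retain only the first-order deformation of the underlying $G$-bundle; its class lies in $H^1(C,E(Ad))$ by the classical deformation theory of principal bundles. The map $f$ is constructed as follows: for $(v_x)\in\prod_{x\in D}T_{t_x}(G/P_x)$, take the constant (trivial) extension $E[\epsilon]:=E\otimes_{\mathbb{C}}\mathbb{C}[\epsilon]/(\epsilon^2)$ and equip it with the perturbed parabolic structures $\tilde t_x:=t_x+\epsilon v_x$, which makes sense since $E[\epsilon](G)_x/P_x$ is canonically $(G/P_x)\otimes\mathbb{C}[\epsilon]/(\epsilon^2)$.

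Next I would check exactness. The composition $g\circ f$ is zero by construction. For surjectivity of $g$: given any deformation $\tilde E_0$ of the underlying $G$-bundle, the restriction $\tilde E_0|_{x[\epsilon]}$ is a $G$-torsor on $\mathrm{Spec}\,\mathbb{C}[\epsilon]/(\epsilon^2)$, hence automatically trivial; this lets us lift each $t_x$ to some $\tilde t_x\in\tilde E_0(G)_x/P_x$, producing a parabolic deformation mapping to $[\tilde E_0]$. For exactness in the middle: if $g(\tilde E)=0$, then the underlying deformation is trivial, so we may choose an identification $\tilde E_0\cong E[\epsilon]$; under this identification each $\tilde t_x$ is a lift of $t_x$ in $(G/P_x)\otimes\mathbb{C}[\epsilon]/(\epsilon^2)$, hence of the form $t_x+\epsilon v_x$ for a unique $v_x\in T_{t_x}(G/P_x)$, realising $\tilde E=f((v_x))$.

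The main obstacle is the injectivity of $f$, which hinges on pinning down the equivalence relation defining $\mathbf{D}_E$. With the convention that deformations come with the fixed isomorphism $\tilde E|_C\simeq E$, two deformations of the form $(E[\epsilon],\{t_x+\epsilon v_x\})$ and $(E[\epsilon],\{t_x+\epsilon v_x'\})$ are equivalent only via an automorphism of $E[\epsilon]$ restricting to the identity on $E$; in the fibration picture such an automorphism acts trivially on the fibre direction, forcing $(v_x)=(v_x')$. I would work out this step carefully via a \v Cech-cocycle description, matching the coboundary computation that identifies $H^1(C,E(Ad))$ with pure deformations and tracking the auxiliary data $(v_x)$ along the way.
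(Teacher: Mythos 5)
Your route is the same as the paper's: $g$ is the forgetful map, surjective because a first-order deformation of the underlying bundle is trivial near each $x\in D$ and so admits lifts of the flags, and the set of lifts of the $t_x$ over $\mathbb{C}[\epsilon]/(\epsilon^2)$ is identified with $\prod_{x\in D}T_{t_x}(G/P_x)$; your middle-exactness check is a slightly more explicit version of what the paper leaves implicit.

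The one step where your write-up asserts something false is the injectivity of $f$: it is not true that an automorphism of $E[\epsilon]$ restricting to the identity on $E$ ``acts trivially on the fibre direction''. Such an automorphism has the form $\exp(\epsilon\psi)$ with $\psi\in H^0(C,E(Ad))$, and it carries the perturbed structure $t_x+\epsilon v_x$ to $t_x+\epsilon\bigl(v_x+\overline{\psi(x)}\bigr)$, where $\overline{\psi(x)}$ denotes the image of $\psi(x)$ in $T_{t_x}(G/P_x)\cong E(Ad)_x/\mathsf{p}_{t_x}$. So with the framed convention you adopt, $f$ is injective only when the evaluation map $H^0(C,E(Ad))\rightarrow \prod_{x\in D}T_{t_x}(G/P_x)$ vanishes (it does not, e.g., for the trivial bundle); in general the kernel of $g$ is the cokernel of this evaluation map, as one also sees from the sheaf sequence $0\rightarrow E(Ad)^{par}\rightarrow E(Ad)\rightarrow \oplus_{x\in D}\,T_{t_x}(G/P_x)\rightarrow 0$ and its long exact sequence. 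To be fair, the paper's own proof is silent on exactly this point: it only counts parabolic subgroups $\tilde P_x$ over $\mathbb{C}[\epsilon]/(\epsilon^2)$ lifting $t_x$, i.e.\ it computes the fibre of $g$ before any identification by automorphisms is made. So either restrict to the situation where $H^0(C,E(Ad))$ maps to zero in $\prod_{x}T_{t_x}(G/P_x)$ (as on the stable locus used later), or define $\mathbf{D}_E$ so that no quotient by such automorphisms is taken; as written, your justification of injectivity is the step that would fail.
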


\begin{proof}

Recall that $H^1(C, E(Ad))$ is the infinitesimal deformation space of $E$ as a twisted $G$ bundle, so the morphism $g$ is given by forgetting parabolic structures. Since every twisted $G$ bundle can be equipped with any parabolic structure, $g$ is an surjection.

To determine the kernel of $g$, we need to figure out how many parabolic structures we can impose on a $\mathcal{E}$ so that the restriction to $C$ are the parabolic structures $\{t_x\in G/P_x\}$. The question is local, so it is equivalent to find a parabolic subgroups $\tilde{P}_x\subset G(\mathbb{C}[\epsilon]/(\epsilon^2))$ such that $\tilde{P}_x|_0=t_x\in G/P_x$. The space of such groups is exactly $\prod_{x\in D}T_{t_x}(G/P_x)$.
\end{proof}

\begin{cor}

For any family of stable parabolic $G$ bundle $\mathcal{F}$ over $S\times  C$,let $\pi_S: S\times C\longrightarrow S$ be the projection and $\varphi_S:S\longrightarrow M_{G,P}$ be the induced map, then

$$\varphi_S^*(\omega_{M_{G,P}}^{-1})=\text{det}(R\pi_S\mathcal{F}(Ad))^{-1}\otimes \bigotimes_{x\in D}\big\{\bigotimes_{i=1}^{l_x}\text{det}\big(Q_{i}(\mathcal{F}_{S\times \{x\}})\big)^{m_i(x)} \big\}$$ where $m_i(x)=n_i(x)+n_{i+1}(x)$ for $1\leq x \leq l_x-1$;  $m_{l_x}=n_{l_x}+n_{l_x+1}+1$ for $G=Sp(2n)$; $m_{l_x}=n_{l_x}+n_{l_x+1}-1$ for $G=SO(2n)$ and $m_{l_x}=n_{l_x}+n_{l_x+1}$ for $G=SO(2n+1)$.

\end{cor}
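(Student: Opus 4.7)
The plan is to relativize the exact sequence in the preceding proposition to a short exact sequence of $\mathcal{O}_S$-modules and then take determinants. For a family $\mathcal{F}$ of stable parabolic $G$ bundles over $S\times C$, the classifying map $\varphi_S\colon S\to M_{G,P}$ satisfies $\varphi_S^{*}T_{M_{G,P}}\cong \mathbf{D}_{\mathcal{F}/S}$, the sheaf of relative first-order deformations of $\mathcal{F}$. Globalizing the preceding proposition produces an exact sequence
\[
0\longrightarrow\bigoplus_{x\in D}\mathcal{T}_x(\mathcal{F})\longrightarrow\varphi_S^{*}T_{M_{G,P}}\longrightarrow R^{1}\pi_{S*}\mathcal{F}(Ad)\longrightarrow 0,
\]
where $\mathcal{T}_x(\mathcal{F})$ is the pullback along the parabolic section of the vertical tangent bundle of the associated $G/P_x$-bundle $(\mathcal{F}|_{S\times\{x\}})/P_x\to S$. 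Since $G$ is connected simply connected simple and each $\mathcal{F}_s$ is stable, infinitesimal automorphisms of $\mathcal{F}_s$ come only from the discrete center, so $\pi_{S*}\mathcal{F}(Ad)=0$ and therefore $\det R^{1}\pi_{S*}\mathcal{F}(Ad)=(\det R\pi_{S*}\mathcal{F}(Ad))^{-1}$. Taking determinants of the exact sequence yields
\[
\varphi_S^{*}\omega_{M_{G,P}}^{-1}=(\det R\pi_{S*}\mathcal{F}(Ad))^{-1}\otimes\bigotimes_{x\in D}\det\mathcal{T}_x(\mathcal{F}).
\]

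The second step is to identify $\det\mathcal{T}_x(\mathcal{F})$ with $\bigotimes_{i=1}^{l_x}\det Q_i(\mathcal{F}_{S\times\{x\}})^{m_i(x)}$. Because $\mathcal{T}_x(\mathcal{F})$ is the pullback of the relative tangent bundle of an associated isotropic flag bundle, its determinant is the pullback of the anti-canonical bundle of $G/P_x$ under the section classifying the parabolic structure; it therefore suffices to verify
\[
\omega_{G/P_x}^{-1}\cong\bigotimes_{i=1}^{l_x}\det(Q_i)^{m_i(x)}
\]
on $G/P_x$ itself, where $Q_i$ are the tautological quotients on the isotropic flag variety. This is a Bott-style root-space computation: one decomposes $\mathfrak{g}/\mathfrak{p}_x$ into pieces indexed by pairs of consecutive flag quotients, and for the non-middle indices $1\le i\le l_x-1$ each such pair contributes $n_i+n_{i+1}$ to the exponent of $\det Q_i$. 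The middle piece at $i=l_x$ is self-dual with respect to the form, yielding $\mathrm{Sym}^{2}Q_{l_x}^{*}$ in the $Sp(2n)$ case (giving the $+1$ correction), $\Lambda^{2}Q_{l_x}^{*}$ in the $SO(2n)$ case (giving the $-1$), and a combination of $\Lambda^{2}Q_{l_x}^{*}$ with a $\mathrm{Hom}(Q_{l_x}^{*},\det W^\perp/W)$-type direction for $SO(2n+1)$ (producing the stated correction).

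Combining the two steps yields the claimed formula for $\varphi_S^{*}\omega_{M_{G,P}}^{-1}$. The main obstacle will be the case-by-case bookkeeping in the second step: one must carefully track the duality $F_{2l_x+1-i}=F_i^{\perp}$ induced by the symplectic/orthogonal form in order to collapse the contributions from both halves of the flag (naturally indexed by $i=1,\dots,2l_x$) into the single product over $i=1,\dots,l_x$ appearing in the statement, and then check that the three classical Lie types produce exactly the exponents $m_i(x)$ listed. The first factor, $(\det R\pi_{S*}\mathcal{F}(Ad))^{-1}$, is by contrast the routine half, following directly from the vanishing of $\pi_{S*}\mathcal{F}(Ad)$ for stable bundles.
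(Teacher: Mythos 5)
Your proposal is correct and follows exactly the route the paper intends: the corollary is left as an immediate consequence of the preceding proposition, i.e.\ one relativizes the exact sequence $0\to\prod_{x\in D}T_{t_x}(G/P_x)\to\mathbf{D}_E\to H^1(C,E(Ad))\to 0$ over $S$, takes determinants, uses stability (so $\pi_{S*}\mathcal{F}(Ad)=0$) to identify $\det R^1\pi_{S*}\mathcal{F}(Ad)$ with $(\det R\pi_S\mathcal{F}(Ad))^{-1}$, and computes the anticanonical bundle of each isotropic flag variety $G/P_x$ in the tautological quotients, which is precisely where the exponents $m_i(x)$ and the type-dependent $\pm1$ corrections at $i=l_x$ come from. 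Your case-by-case description of the middle term ($\mathrm{Sym}^2$ for $Sp$, $\Lambda^2$ for $SO(2n)$, the extra line for $SO(2n+1)$) matches the stated values of $m_{l_x}$, so no essential difference from the paper's (implicit) argument.
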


The main results in this section is to under certain choices of weights, the moduli space of parabolic symplectic/orthogonal bundles are Fano varieties. A normal projective variety $X$ is call Fano if $\omega_X^{-1}$ is an ample line bundle. Our method is to compare the pull back of anti-canonical line bundle over $M_{G,P}$ to $X_{P}^{ss}$ with theta line bundle over $X_{P}^{ss}$. It is known that the Picard group of moduli space of symplectic/orthogonal bundles has rank one, so there exists positive integer $\chi_G$ such that $\text{det}(R\pi_S\mathcal{F}(Ad))\cong(\text{det}R\pi_S\mathcal{F})^{\otimes\chi_G}$. For $G=Sp(2n)$, $\chi_G=n+1$, for $G=SO(2n)$, $\chi_G=2n-2$ and for $G=SO(2n+1)$, $\chi_G=2n-1$.

We first deal with symplectic case, since symplectic groups are simply connected. Combine Proposition \ref{injection} and Theorem  \ref{theta line bundle} together, we have:

\begin{prop}\label{Fano1}

Let $G=Sp(2n)$ , $K=2\chi_G$ and $\overrightarrow{a}(x)$ satisfying $a_{i+1}(x)-a_{i}(x)=m_i(x)$ for $1\leq i \leq l_x$ , the moduli space of parabolic symplectic bundles are Fano.

\end{prop}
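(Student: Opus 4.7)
The plan is to show $\Theta_{D_l}\cong \omega_{M_{G,P}}^{-2}$ in $\text{Pic}(M_{G,P})$, after which ampleness of $\omega_{M_{G,P}}^{-1}$ (i.e.\ Fano-ness of $M_{G,P}$) follows from the ampleness of $\Theta_{D_l}$ (Theorem~\ref{theta line bundle}) together with the fact that on a projective variety a line bundle whose square is ample is itself ample. The identification is obtained, for any family $\mathcal{F}$ of stable parabolic symplectic bundles over $S\times C$, by matching the explicit description of $\Theta_{\mathcal{F},D_l}$ term by term against the formula of the preceding Corollary for $\varphi_S^{*}(\omega_{M_{G,P}}^{-1})$. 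Once the functorial identity $\Theta_{\mathcal{F},D_l}\cong \varphi_S^{*}(\omega_{M_{G,P}}^{-1})^{\otimes 2}$ is in hand, I apply it to the universal family and invoke the injectivity of $\phi^{*}\colon \text{Pic}(M_{G,P})\to \text{Pic}(X_P^{ss})$ from Proposition~\ref{injection} to descend the relation to $M_{G,P}$.

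To match the flag parts at each $x\in D$, the symplectic symmetry $a_i(x)+a_{2l_x+2-i}(x)=K$ combined with the hypothesis $a_{i+1}(x)-a_i(x)=m_i(x)$ for $1\leq i\leq l_x$ forces $d_j(x)=m_{2l_x+1-j}(x)$ for $l_x+1\leq j\leq 2l_x$; using the isomorphism $\text{det}(Q_i)\cong \text{det}(Q_{2l_x+1-i})$ and re-indexing then yields
\[\bigotimes_{i=1}^{2l_x}\text{det}\big(Q_i(\mathcal{F}_{S\times \{x\}})\big)^{d_i(x)}\cong \bigotimes_{i=1}^{l_x}\text{det}\big(Q_i(\mathcal{F}_{S\times \{x\}})\big)^{2m_i(x)},\]
so the flag contribution in $\Theta_{\mathcal{F},D_l}$ is exactly twice that in $\varphi_S^{*}(\omega_{M_{G,P}}^{-1})$. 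For the determinant-of-cohomology factor, $K=2\chi_G$ together with $\text{det}(R\pi_S\mathcal{F}(Ad))\cong (\text{det}R\pi_S\mathcal{F})^{\chi_G}$ gives $(\text{det}R\pi_S\mathcal{F})^{-K}\cong \text{det}(R\pi_S\mathcal{F}(Ad))^{-2}$, again a doubling. The only remaining piece of $\Theta_{\mathcal{F},D_l}$ is $\bigotimes_{q}\text{det}(\mathcal{F}_{S\times \{z_q\}})^{l_q}$; since $z_q\notin D$, the symplectic form restricts to a fibrewise non-degenerate skew form $\mathcal{F}_{S\times \{z_q\}}\otimes \mathcal{F}_{S\times \{z_q\}}\to \pi_C^{*}\mathcal{O}_C(-D)|_{S\times \{z_q\}}\cong \mathcal{O}_S$, whose top exterior power trivializes $\text{det}\,\mathcal{F}_{S\times \{z_q\}}$, so this factor is trivial in $\text{Pic}(S)$. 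Combining the three pieces gives $\Theta_{\mathcal{F},D_l}\cong \varphi_S^{*}(\omega_{M_{G,P}}^{-1})^{\otimes 2}$.

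Applied to the universal family on the stable locus inside $\mathcal{R}^{ss}$ (equivalently, inside $X_P^{ss}$), this identity together with Theorem~\ref{theta line bundle} yields $\phi^{*}\Theta_{D_l}\cong \phi^{*}(\omega_{M_{G,P}}^{-1})^{\otimes 2}$ on that stable locus, and the equality extends to all of $X_P^{ss}$ since both sides are globally defined line bundles and the strictly semistable locus has codimension at least two by Corollary~\ref{codimension G}. Proposition~\ref{injection} then yields $\Theta_{D_l}\cong \omega_{M_{G,P}}^{-2}$ in $\text{Pic}(M_{G,P})$, and ampleness of $\Theta_{D_l}$ completes the proof. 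The main obstacle is the weight bookkeeping: the hypotheses $K=2\chi_G$ and $a_{i+1}(x)-a_i(x)=m_i(x)$ are calibrated precisely so that both the flag and the determinant-of-cohomology contributions double, while the symplectic trivialization of $\text{det}\,\mathcal{F}_{S\times \{z_q\}}$ is the structural input specific to $Sp(2n)$, leaving the (non-simply-connected) orthogonal cases to require separate treatment.
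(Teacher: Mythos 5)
Your route is the paper's own: match $\Theta_{\mathcal{F},D_l}$ term by term against the formula for $\varphi_S^{*}(\omega_{M_{G,P}}^{-1})$, descend through the injectivity of Proposition \ref{injection}, and quote ampleness of $\Theta_{D_l}$ from Theorem \ref{theta line bundle}. Your bookkeeping is correct and in fact more explicit than the paper's: the symmetry $a_i(x)+a_{2l_x+2-i}(x)=K$ together with $\det Q_i\cong\det Q_{2l_x+1-i}$ doubles the flag contribution, $K=2\chi_G$ doubles the determinant-of-cohomology contribution, and the Pfaffian of the symplectic form trivializes $\det(\mathcal{F}_{S\times\{z_q\}})$ (a point the paper leaves implicit). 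The paper records the resulting power as $\omega_{M_{G,P}}^{-2\chi_G}$ where your computation gives $\omega_{M_{G,P}}^{-2}$; this discrepancy is immaterial for Fano-ness, since any positive power suffices.

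The genuine gap is that you treat $\omega_{M_{G,P}}$ as an element of $\text{Pic}(M_{G,P})$ throughout: you pull back $(\omega_{M_{G,P}}^{-1})^{\otimes 2}$ by $\phi$, extend it over the strictly semistable locus of $X_P^{ss}$, and conclude an isomorphism ``in $\text{Pic}(M_{G,P})$''. But $M_{G,P}$ is a singular GIT quotient and is not known a priori to be Gorenstein; $\omega_{M_{G,P}}$ is only a reflexive sheaf, so $\omega_{M_{G,P}}^{-2}$ need not be a line bundle, its pullback under $\phi$ is not defined as you use it, and Proposition \ref{injection} cannot be applied to it as stated. This is exactly what the paper's proof is about: it restricts to the open subset $M^{\circ}\subset M_{G,P}$ where $\omega_{M_{G,P}}$ is invertible and whose points represent stable bundles, asserts $\text{codim}(M_{G,P}\setminus M^{\circ})\geq 2$, applies Proposition \ref{injection} over $M^{\circ}$ to identify $\Theta_{D_l}$ with the relevant power of the anticanonical there, and then invokes Lemma 2.7 of \cite{KN97} to conclude that $\omega_{M_{G,P}}$ is in fact a line bundle and that $M_{G,P}$ is Fano; some such final step is indispensable and is missing from your argument. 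Relatedly, your codimension-two claim for the strictly semistable locus is not what Corollary \ref{codimension G} gives unconditionally: its bound $\deg D/K+\text{rank}(\mathcal{E}_{s,\mathsf{s}})(g-1)$ need not be $\geq 2$ under the hypotheses of this proposition (in the proof of the main theorem the authors enlarge $D$ by an auxiliary divisor precisely to force such a bound), so the extension across the non-stable locus has to be justified differently, as in the paper.
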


\begin{proof}

We show that under the condition in the proposition, $\Theta_{D_l}$ is equal to $\omega_{M_{G,P}}^{-2\chi_G}$. The problem here is that we do not know whether $M_{G,P}$ is Gorenstein or not, i.e. whether $\omega_{M_{G,P}}$ is a line bundle. But we do know that $M_{G,P}$ is Cohen-Macaulay and normal. Let $M^{\circ}\subset M_{G,P}$ be the open subset where $\omega_{M_{G,P}}$ is a line bundle and points in $M^{\circ}$ representing stable bundles, then we have $\text{codim}(M_{G,P}\setminus M^{\circ})\geq 2$. Apply Proposition \ref{injection} to $M^{\circ}$ we see that $\omega_{M_{G,P}}^{-2\chi_G}$ and $\Theta_{D_l}$ are coincide over $M^{\circ}$. Now we use Lemma 2.7 of \cite{KN97}, and we see that $\omega_{M_{G,P}}$ is a line bundle, moreover, $M_{G,P}$ is a Fano variety.
\end{proof}

The special orthogonal group case is different, since $SO(n)$ is not simply connected, and its universal cover is $Spin(n)$. For any one parameter subgroup of $SO(n)$, we choose a lift to be a one parameter subgroup of $Spin(n)$. Then if we consider the moduli space of parabolic $Spin(n)$ bundles with parabolic structure given by the lifts, by Lemma 1.4 of \cite{BR89}, we would have a natural map: $t: M_{Spin(n),P}\rightarrow M_{SO(n),P}$ which identifies $M_{SO(n),P}$ as a quotient by a finite group of $M_{Spin(n),P}$. By discussion in the section 6 of \cite{BLS98}, we have:

\begin{prop}

The map between Picard groups: $t^*:\text{Pic}(M_{SO(n),P})\rightarrow \text{Pic}(M_{Spin(n),P})$ is injective on the subgroup of infinite order elements.

\end{prop}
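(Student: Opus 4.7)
The plan is to exploit the fact that $t:M_{Spin(n),P}\to M_{SO(n),P}$ realizes the target as a quotient of the source by the action of a finite group $\Gamma$, coming from the reductions of structure group compatible with the two-to-one isogeny $Spin(n)\twoheadrightarrow SO(n)$ (concretely, $\Gamma$ is controlled by $H^{1}(C,\ker(Spin(n)\to SO(n)))$, a finite $2$-group of order $2^{2g}$). In particular $t$ is a finite surjective morphism of normal projective varieties of degree $|\Gamma|$. The statement to prove is equivalent to the assertion that the kernel of $t^{\ast}:\operatorname{Pic}(M_{SO(n),P})\to\operatorname{Pic}(M_{Spin(n),P})$ consists entirely of torsion elements, so I will focus on bounding $\ker(t^{\ast})$.

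The main tool is a norm map $Nm_{t}:\operatorname{Pic}(M_{Spin(n),P})\to\operatorname{Pic}(M_{SO(n),P})$ satisfying the compatibility $Nm_{t}\circ t^{\ast}=|\Gamma|\cdot\operatorname{id}$. The construction is standard on the smooth locus via the determinant of pushforward, and since the moduli spaces are normal and Cohen--Macaulay by the theorem in Section \ref{construction}, it extends by normality over the singular locus, which has codimension $\geq 2$. Granting this norm, if $L\in\operatorname{Pic}(M_{SO(n),P})$ satisfies $t^{\ast}L\cong\mathcal{O}$, then applying $Nm_{t}$ gives $L^{|\Gamma|}\cong\mathcal{O}$, so $L$ has finite order dividing $|\Gamma|$. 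Equivalently, every element of infinite order remains of infinite order after pullback by $t^{\ast}$.

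An alternative, more hands-on route would be to restrict to the open subscheme $U\subset M_{Spin(n),P}$ on which $\Gamma$ acts freely. The complement of $U$ lies over the non-regularly-stable locus, which has codimension $\geq 2$ thanks to the codimension estimates from Section \ref{cod est} (applied to the families underlying $M_{Spin(n),P}$ and $M_{SO(n),P}$). Over $U$ the quotient $U\to U/\Gamma$ is étale Galois with group $\Gamma$, and the kernel of the pullback on Picard groups is computed by $H^{1}(\Gamma,\mathbb{C}^{\ast})=\operatorname{Hom}(\Gamma,\mathbb{C}^{\ast})$, again a finite group. Normality then allows us to identify $\operatorname{Pic}(U)\cong\operatorname{Pic}(M_{Spin(n),P})$ and likewise for the quotient, finishing the proof.

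The part I expect to require most care is verifying the codimension-$\geq 2$ statement for the non-free locus (or equivalently, justifying the norm construction in the singular setting). Both reduce to showing that the $\Gamma$-action is free away from a closed subset of codimension at least two; this is exactly the kind of estimate that Corollary \ref{codimension G} is tailored to provide, once one notes that the non-free locus sits inside the strictly semistable locus. Everything else is formal group theory, so the real work is the codimension bookkeeping, which is already in place in Section \ref{cod est}.
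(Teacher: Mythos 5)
Your main argument is correct, and it is a genuinely different route from the paper, which offers no self-contained proof but simply appeals to the discussion of Picard groups in Section 6 of \cite{BLS98}. What your norm argument really needs is only that $t$ is a finite surjective morphism onto the (normal, projective) moduli space in question, which the paper supplies via Lemma 1.4 of \cite{BR89} and the twisting action of $H^1(C,\mathbb{Z}/2)$: then for $L\in\text{Pic}(M_{SO(n),P})$ with $t^*L\cong\mathcal{O}$, either your norm on the smooth locus (where flatness of $t$ follows from miracle flatness, since $M_{Spin(n),P}$ is Cohen--Macaulay and the smooth locus is regular, and the complement has codimension $\geq 2$ by normality), or more simply the projection formula $t_*t^*=\deg(t)\cdot\mathrm{id}$ on Weil divisor class groups together with the injectivity of $\text{Pic}\hookrightarrow \text{Cl}$ for normal varieties, gives $L^{\deg t}\cong\mathcal{O}$, so $\ker(t^*)$ is torsion. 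This is more elementary than the route of \cite{BLS98}, which identifies the kernel via descent as a character group of the twisting group and thus gives finer information (an explicit finite $2$-group), at the cost of heavier machinery; your argument only pins down that the kernel is killed by $\deg t$, which is all the proposition needs.

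Two caveats on your closing assessment. First, your claim that the non-free locus of the $\Gamma$-action sits inside the strictly semistable locus is not justified: a stable $Spin$-bundle can be isomorphic to its twist by a nontrivial $2$-torsion class (such fixed points arise from bundles pulled back along \'etale double covers), so Corollary \ref{codimension G} does not control the non-free locus, and the alternative \'etale-Galois route would need a separate argument there. Second, and consequently, it is not true that ``both routes reduce to'' that codimension estimate: the norm/projection-formula route requires no freeness whatsoever, only finiteness, surjectivity and normality, which is precisely why it is the cleaner path. (Also, restriction to a big open subset gives only injectivity $\text{Pic}(M)\hookrightarrow\text{Pic}(U)$ in general, not an isomorphism, but injectivity is all your argument uses.)
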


Similar as before, we have:

\begin{prop}\label{Fano2}

Let $G=SO(n)$ , $K=2\chi_G$ and $\overrightarrow{a}(x)$ satisfying $a_{i+1}(x)-a_{i}(x)=m_i(x)$ for $1\leq i \leq l_x$ , the moduli space of parabolic special orthogonal bundles are Fano.

\end{prop}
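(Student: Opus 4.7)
The plan is to reduce the $SO(n)$ case to the simply-connected case already handled in Proposition \ref{Fano1} by first establishing the anti-canonical/theta comparison on $M_{Spin(n),P}$ and then descending it along $t: M_{Spin(n),P} \to M_{SO(n),P}$, using the injectivity statement of the preceding proposition.

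First I would apply the argument of Proposition \ref{Fano1} verbatim on $M_{Spin(n),P}$. Since $Spin(n)$ is connected, simply connected and simple, the infinite-Grassmannian machinery of Section \ref{Inf Gr}---in particular the injection $\phi^*:\mathrm{Pic}(M_{Spin(n),P}) \hookrightarrow \mathrm{Pic}(X_P^{ss})$ from Proposition \ref{injection} and the comparison of pullbacks of anti-canonical and theta bundles on $X_P^{ss}$---applies without modification. Restricting to the open subscheme $M^\circ_{Spin(n),P}$ of stable points where $\omega$ is locally free, whose complement has codimension $\geq 2$ by Corollary \ref{codimension G}, one obtains an isomorphism $\omega_{M_{Spin(n),P}}^{-2\chi_G} \cong \Theta_{D_l}^{Spin(n)}$ on $M^\circ$, which extends to all of $M_{Spin(n),P}$ by normality together with Lemma 2.7 of \cite{KN97}.

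Next I would transport this identification through $t$. The theta line bundle construction is natural in the universal family, so $t^*\Theta_{D_l}^{SO(n)} \cong \Theta_{D_l}^{Spin(n)}$ by Theorem \ref{theta line bundle}. The morphism $t$ is a finite quotient by the kernel of $Spin(n) \to SO(n)$; on the open stable Gorenstein locus this quotient is étale in codimension one, so $t^*\omega_{M_{SO(n),P}}^{-2\chi_G} \cong \omega_{M_{Spin(n),P}}^{-2\chi_G}$ once we have promoted $\omega_{M_{SO(n),P}}$ to a line bundle via the same codimension-two argument used in Proposition \ref{Fano1}. Combining these identifications yields $t^*\bigl(\omega_{M_{SO(n),P}}^{-2\chi_G}\otimes \Theta_{D_l}^{-1}\bigr) \cong \mathcal{O}_{M_{Spin(n),P}}$.

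Finally, by the preceding proposition $t^*$ is injective on the subgroup of infinite-order elements of $\mathrm{Pic}(M_{SO(n),P})$, so $\omega_{M_{SO(n),P}}^{-2\chi_G}\otimes \Theta_{D_l}^{-1}$ must be torsion. Since $\Theta_{D_l}^{SO(n)}$ is ample by Theorem \ref{theta line bundle} and twisting an ample line bundle by a torsion one preserves ampleness---a sufficiently divisible positive power reduces to a power of the ample bundle alone---$\omega_{M_{SO(n),P}}^{-1}$ is ample and $M_{SO(n),P}$ is Fano. The main technical obstacle I expect is in the second step: making the identification $t^*\omega_{M_{SO(n),P}} \cong \omega_{M_{Spin(n),P}}$ rigorous despite the possible failure of Gorenstein-ness of either moduli space, which forces the codimension-two extension argument and is precisely why the preceding proposition is phrased only modulo torsion.
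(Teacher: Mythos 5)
Your overall skeleton (reduce to the simply connected cover, use $t^*$ together with the injectivity-modulo-torsion statement, and absorb the torsion ambiguity by taking powers, since ampleness is unaffected) is the same strategy the paper intends with its ``similar as before''. Your explicit handling of the torsion twist at the end is fine and is actually a point the paper glosses over.

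However, there is a genuine gap in your middle step: you transport the anti-canonical bundle across the quotient by asserting $t^*\omega_{M_{SO(n),P}}\cong\omega_{M_{Spin(n),P}}$ because $t$ is ``\'etale in codimension one''. That is exactly the claim that the finite group acting on $M_{Spin(n),P}$ (twisting by $2$-torsion, a quotient of $H^1(C,\mu_2)$) has fixed locus of codimension at least two, and you neither prove it nor can you quote it from the paper; it is not obvious for all the genera and parabolic configurations allowed here ($g\geq 0$), and your argument collapses without it. A second, related over-claim is that Proposition \ref{Fano1} applies ``verbatim'' to $M_{Spin(n),P}$: the paper never constructs a theta line bundle or an anti-canonical formula on the Spin moduli space (its GIT construction, Theorem \ref{theta line bundle}, and the corollary computing $\varphi_S^*\omega^{-1}_{M_{G,P}}$ are all phrased for the twisted symplectic/orthogonal vector-bundle moduli), so that step also needs justification. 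The paper's route avoids both issues: the corollary giving $\varphi_S^*(\omega^{-1}_{M_{G,P}})$ is stated for families of stable parabolic $SO(2n)$ and $SO(2n+1)$ bundles, so one compares the pullbacks of $\omega^{-2\chi_G}_{M_{SO(n),P}}$ and $\Theta_{D_l}$ directly along the classifying map of the family pulled back through $M_{Spin(n),P}$ (where Proposition \ref{injection} applies, since $Spin(n)$ is simply connected), and then descends the identification, up to torsion, using the injectivity of $t^*$ on infinite-order elements; no comparison of dualizing sheaves across the finite quotient is ever needed. To repair your write-up, either prove the codimension bound for the fixed loci of the twisting action, or switch to the paper's family-level comparison of the two pullbacks.
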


\section{Globally F regular type varieties and Main theorem}\label{section 6}

Let $k$ be a perfect field of $char(k)=p>0$ and $X$ be a normal variety over $k$. Consider $$F: X\longrightarrow X$$ to be the absolute Frobenius map and $F^e: X\rightarrow X$ to be the $e$-th iteration of $F$.

For any Weil divisor $D\in Div(X)$, we have a reflexive sheaf $$\mathcal{O}_X(D)=j_*\mathcal{O}_{X^{sm}}(D)$$ where $j:X^{sm}\hookrightarrow X$ is the inclusion of smooth locus, and $\mathcal{O}_X(D)$ is an invertible sheaf if and only if $D$ is a Cartier divisor.

\begin{defn}

Let $X$ and $D$ be as above, $X$ is called \emph{stably Frobenius D-split} if the natural homomorphism $$\mathcal{O}_X\longrightarrow F^e_*\mathcal{O}_X(D)$$ is split as an $\mathcal{O}_X$ homomorphism for some $e>0$. And $X$ is called \emph{globally F-regular} if $X$ is stably Frobenius $D$-split for any effective divisor $D$.

\end{defn}

We state the following lemma about globally F-regular varieties, for proof and more details, please refer to \cite{SZh18}, \cite{S00}.

\begin{lem}[Corollary 6.4 of \cite{SS10}]\label{generating lemma}

Let $f:X\rightarrow Y$ be a morphism of normal varieties over $k$. Assume that the natural map $$f^{\#}: \mathcal{O}_Y\longrightarrow f_*\mathcal{O}_X$$ splits as an $\mathcal{O}_Y$ homomorphism, then if $X$ is globally F-regular, so is $Y$.

\end{lem}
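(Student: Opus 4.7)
The plan is to verify directly that $Y$ satisfies the definition of global F-regularity: for any effective Weil divisor $D$ on $Y$, I must produce an integer $e > 0$ together with an $\mathcal{O}_Y$-linear splitting of the canonical composite $\mathcal{O}_Y \to F^e_{Y,*}\mathcal{O}_Y \to F^e_{Y,*}\mathcal{O}_Y(D)$. The strategy is to lift $D$ to a suitable effective Weil divisor $E$ on $X$, apply the hypothesis that $X$ is globally F-regular to get a splitting upstairs for $E$, push it down via $f_*$, and then collapse the result using the given retraction $s : f_*\mathcal{O}_X \to \mathcal{O}_Y$ of $f^{\#}$.

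Step one: construct $E$ and the comparison map. Since $f^{\#}$ splits it is injective, so $f$ is scheme-theoretically dominant and induces an inclusion of total quotient rings (up to splitting into components). On the dense open $U \subseteq Y$ where $D$ is Cartier and $f^{-1}(U)\to U$ is flat, pull $D$ back as a Cartier divisor, and extend its support by Zariski closure to an effective Weil divisor $E$ on $X$. Because $\mathcal{O}_Y(D)$ and $\mathcal{O}_X(E)$ are reflexive on the normal varieties $Y$ and $X$, and are therefore determined by their restrictions to the codimension-one complements, $f^{\#}$ extends uniquely to an $\mathcal{O}_Y$-linear injection $\iota : \mathcal{O}_Y(D) \hookrightarrow f_*\mathcal{O}_X(E)$ whose restriction along $\mathcal{O}_Y \hookrightarrow \mathcal{O}_Y(D)$ recovers $f^{\#}$.

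Step two: assemble and verify the splitting. Global F-regularity of $X$ applied to $E$ produces an integer $e>0$ and an $\mathcal{O}_X$-linear retraction $\tau : F^e_{X,*}\mathcal{O}_X(E) \to \mathcal{O}_X$ of the canonical map $c_X : \mathcal{O}_X \to F^e_{X,*}\mathcal{O}_X(E)$. Using the identity $F^e_Y \circ f = f \circ F^e_X$ (equivalently $F^e_{Y,*}f_* = f_* F^e_{X,*}$), define
\[
\psi \,:\, F^e_{Y,*}\mathcal{O}_Y(D) \xrightarrow{F^e_{Y,*}\iota} F^e_{Y,*} f_*\mathcal{O}_X(E) \;=\; f_* F^e_{X,*}\mathcal{O}_X(E) \xrightarrow{f_*\tau} f_*\mathcal{O}_X \xrightarrow{s} \mathcal{O}_Y.
\]
Pre-composing $\psi$ with the canonical $\mathcal{O}_Y \to F^e_{Y,*}\mathcal{O}_Y(D)$ and using naturality of the Frobenius push-forward together with $\iota|_{\mathcal{O}_Y} = f^{\#}$, the composite rewrites as $\mathcal{O}_Y \xrightarrow{f^{\#}} f_*\mathcal{O}_X \xrightarrow{f_*(\tau\circ c_X)} f_*\mathcal{O}_X \xrightarrow{s} \mathcal{O}_Y$. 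Since $\tau \circ c_X = \mathrm{id}_{\mathcal{O}_X}$, this collapses to $s \circ f^{\#} = \mathrm{id}_{\mathcal{O}_Y}$, exhibiting $\psi$ as the required splitting; hence $Y$ is globally F-regular.

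The main obstacle I expect is Step one, namely producing the $\mathcal{O}_Y$-linear injection $\iota$ when $f$ is assumed only to be a morphism, with no flatness, finiteness, or properness available and with $D$ merely Weil. Pulling back $D$ requires restricting to a dense open and then extending via reflexive hulls, and the verification that the resulting $\iota$ is $\mathcal{O}_Y$-linear globally (rather than only on a dense open) rests on normality of both $X$ and $Y$ together with the injectivity of $f^{\#}$ furnished by the splitting. Once $\iota$ is established the remainder of the argument is a formal chase through Frobenius naturality and the retraction $s$.
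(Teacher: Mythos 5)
The paper does not actually prove this lemma: it is quoted as Corollary 6.4 of \cite{SS10} and the reader is referred to \cite{SZh18} and \cite{S00}, so your argument has to be judged on its own terms. Your Step two is a correct formal computation: once an $\mathcal{O}_Y$-linear map $\iota:\mathcal{O}_Y(D)\to f_*\mathcal{O}_X(E)$ extending $f^{\#}$ is in hand, pushing forward the splitting on $X$ and composing with the retraction $s$ does yield the required splitting on $Y$. The genuine gap is in Step one, which you flag as delicate but do not resolve: with $E$ defined as the Zariski closure of $f^{*}(D|_U)$, the map $\iota$ need not exist, and the reflexivity argument you invoke does not apply, because $X\setminus f^{-1}(U)$ can have codimension one in $X$ (reflexive sheaves only extend sections uniquely across loci of codimension at least two \emph{in $X$}, not in $Y$). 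Concretely, let $f:X\to Y=\mathbb{A}^2$ be the blow-up of the origin, $U=Y\setminus\{0\}$ and $D=\{x=0\}$. Then your $E$ is the strict transform of $D$, while the section $1/x$ of $\mathcal{O}_Y(D)$ pulls back to a rational function with a pole of order one along the exceptional divisor, hence is not a section of $f_*\mathcal{O}_X(E)$; the asserted injection $\iota$ simply is not there for this choice of $E$.

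The gap is repairable, and the repair is where the real content lies. A prime divisor $P\subset X$ along which the pullback of a local section of $\mathcal{O}_Y(D)$ can acquire a pole must satisfy $f(\eta_P)\in \text{Supp}\,D$, i.e. $P\subseteq f^{-1}(\text{Supp}\,D)$; since $f$ is dominant (as $f^{\#}$ is split, hence injective) this is a proper closed subset of $X$, so $P$ is one of its finitely many irreducible components of codimension one. Choose finitely many local generators $g_1,\dots,g_n$ of $\mathcal{O}_Y(D)$ as a fractional ideal sheaf and set the coefficient of $P$ in $E$ to be $\max\bigl(0,-\min_i v_P(f^{*}g_i)\bigr)$. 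Writing any local section as $g=\sum a_ig_i$ with $a_i$ regular and using $v_P(f^{*}a_i)\geq 0$ shows $v_P(f^{*}g)\geq -E_P$ everywhere, so $f^{*}$ genuinely carries $\mathcal{O}_Y(D)$ into $f_*\mathcal{O}_X(E)$ for this larger $E$. With that corrected choice of $E$ your Step two goes through verbatim and the proof is complete.
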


Now we let $K$ be a field of characteristic zero.

For any scheme $X$ over $K$, there is a finitely generated $\mathbb{Z}$-algebra $R\subset K$ such that $X$ is "defined" over $R$. That is, there is a flat $R$-scheme $$X_R\longrightarrow S=\text{Spec}R$$ such that $X_K:=X_R\times_S \text{Spec}K\cong X$. $X_R\longrightarrow S$ is called an integral model of $X/K$. For any closed point $s\in S$, $X_s:=X_R\times_S \text{Spec}(\overline{k(s)})$ is called the "modulo $p$ reduction" of $X$, where $p=char(k(s))>0$.

\begin{defn}

A variety $X$ over $K$ is called of \emph{globally F-regular type} if its "modulo $p$ reduction" of $X$ are globally F-regular for a dense set of $p$ for some integral model $X_R\rightarrow S$.

\end{defn}

Globally F-regular type varieties have many nice properties, which we will state some of them as the following theorem. Again, for proof and more details, please refer to \cite{SZh18} and \cite{S00}.

\begin{thm}

Let $X$ be a projective variety over $K$, if $X$ is of globally F-regular type, then:

\begin{enumerate}
\item[(1)] $X$ is normal, Cohen-Macaulay with rational singularities. If $X$ is $\mathbb{Q}$-Gorenstein, then $X$ has log terminal singularities.

\item[(2)] For any nef line bundle $\mathcal{L}$ over $X$, we have ${\rm H}^i(X, \mathcal{L})=0$, for any $i>0$. In particular, ${\rm H}^i(X, \mathcal{O}_X)=0$ for any $i>0$.
\end{enumerate}

\end{thm}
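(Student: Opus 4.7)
The plan is to deduce both parts by passing to an integral model $X_R\to S$, invoking the analogous positive-characteristic results on the closed fibers $X_s$ for a dense set of $s\in S$, and then descending to the geometric generic fiber $X_K\cong X$.

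For part (1), I would first recall that global F-regularity in characteristic $p$ is a stronger condition than strong F-regularity at every point, so a globally F-regular variety is in particular strongly F-regular, hence normal and Cohen-Macaulay, and F-rational; by Smith's theorem F-rationality implies pseudo-rationality, which is the positive-characteristic incarnation of rational singularities. If $X_s$ is moreover $\mathbb{Q}$-Gorenstein, the theorem of Hara--Watanabe upgrades strong F-regularity to log terminality. To transport these statements to $X_K$, I would use the standard descent results: normality and Cohen-Macaulayness are open on the base of a flat morphism of finite type, so they pass from a dense set of fibers to the generic fiber; for rational singularities (resp.\ log terminal singularities) one appeals to the Hara--Mehta--Srinivas / Smith comparison theorems, which state precisely that if $X_s$ is F-rational (resp.\ strongly F-regular and $\mathbb{Q}$-Gorenstein) for a dense set of $s$, then $X_K$ has rational (resp.\ log terminal) singularities. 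This is the content invoked from \cite{S00} and \cite{SZh18}.

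For part (2), fix a nef line bundle $\mathcal{L}$ on $X$ and spread it out to a relatively nef line bundle $\mathcal{L}_R$ on $X_R$. For each closed point $s$ in a dense subset of $S$ the fiber $X_s$ is globally F-regular and $\mathcal{L}_s$ is nef. In this situation a direct Frobenius-splitting argument gives $H^i(X_s,\mathcal{L}_s)=0$ for $i>0$: by Fujita vanishing choose an effective divisor $D$ on $X_s$ with $H^i(X_s,\mathcal{L}_s^{p^e}\otimes\mathcal{O}(D))=0$ for all $e\gg 0$; the splitting of $\mathcal{O}_{X_s}\hookrightarrow F^e_*\mathcal{O}_{X_s}(D)$ provided by Lemma \ref{generating lemma} (applied to $D$), tensored with $\mathcal{L}_s$ and passed through the projection formula, embeds $H^i(X_s,\mathcal{L}_s)$ into $H^i(X_s,\mathcal{L}_s^{p^e}\otimes \mathcal{O}(D))$, which forces the former to vanish. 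Then semi-continuity of cohomology along $X_R\to S$ gives $H^i(X_K,\mathcal{L}_K)=0$, and flat base change transports this to $H^i(X,\mathcal{L})=0$. The case $\mathcal{L}=\mathcal{O}_X$ is a direct specialization.

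The main obstacle is not the positive-characteristic input, which is essentially classical, but the char-zero descent: because global F-regular type demands the positive-characteristic conclusion only on a dense set of primes, one must ensure that every singularity property invoked (normality, Cohen-Macaulayness, rational, log terminal) and the cohomology vanishing can itself be deduced from knowing the property on a dense set of fibers. This is exactly the subtle point handled by the Hara--Watanabe/Mehta--Srinivas style comparison theorems and by generic flatness of $R^i\pi_*\mathcal{L}_R$, so the proof reduces to quoting these descent results from \cite{SZh18}, \cite{S00}.
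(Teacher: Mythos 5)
Your proposal is essentially correct and reconstructs the standard argument; note, however, that the paper itself gives no proof of this theorem at all --- it explicitly defers to \cite{SZh18} and \cite{S00}, which is where the chain of results you describe (strong F-regularity $\Rightarrow$ normal, Cohen--Macaulay, F-rational; Smith's theorem that F-rational type implies rational singularities; Hara--Watanabe for the $\mathbb{Q}$-Gorenstein/log terminal upgrade; Fujita vanishing plus stable Frobenius $D$-splitting plus semicontinuity for the vanishing statement) is actually carried out. So you are not diverging from the paper so much as supplying the content it outsources. Two small points to tighten. First, the splitting $\mathcal{O}_{X_s}\hookrightarrow F^e_*\mathcal{O}_{X_s}(D)$ that you feed into the projection-formula argument is provided directly by the \emph{definition} of global F-regularity (stable Frobenius $D$-splitting for every effective $D$), not by Lemma \ref{generating lemma}, which is the descent statement for split morphisms $f:X\to Y$; citing that lemma here is a misattribution, though harmless. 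Second, in the descent to characteristic zero you should say explicitly that the relevant loci in $S$ (geometric normality and Cohen--Macaulayness of fibers, and the vanishing locus of $R^i\pi_*\mathcal{L}_R$ by semicontinuity) are \emph{open}, so that containing a dense set of closed points forces them to contain the generic point; ``dense'' alone is not enough without openness. With those caveats the argument is sound and matches the proofs in the cited references.
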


Our main theorem of this paper is:

\begin{thm}\label{Main thm}

The moduli space of parabolic symplectic/orthogonal bundles $M_{P}$ over a smooth projective curve $C$ over $\mathbb{C}$ is of globally F-regular type.

\end{thm}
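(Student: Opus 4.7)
The plan is to combine the explicit GIT construction from Section \ref{construction}, the codimension estimates of Section \ref{cod est}, the Fano characterization of Section \ref{Inf Gr}, and the characteristic-zero criterion mentioned in the introduction, namely that Fano varieties with rational singularities are of globally F-regular type (cf.\ \cite{S00}).

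First, I would treat the case of the special weights from Section \ref{Inf Gr}. By Propositions \ref{Fano1} and \ref{Fano2}, the choice $K = 2\chi_G$ together with $a_{i+1}(x) - a_i(x) = m_i(x)$ makes the moduli space $M_{G,P}$ Fano. Since $M_{G,P} = \mathcal{R}^{ss}//SL(V)$ is the GIT quotient of the smooth variety $\mathcal{R}^{ss}$ constructed in Section \ref{construction}, it is normal, Cohen--Macaulay with rational singularities (via \cite{HR74} and Boutot's theorem). The Fano-plus-rational-singularities criterion recalled in the introduction then yields that $M_{G,P}$ is of globally F-regular type for these specific weights.

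Second, I would extend to arbitrary parabolic weights using Proposition \ref{key prop} in conjunction with the codimension estimate Corollary \ref{codimension G}. The underlying scheme-theoretic parameter space $\mathcal R$ of Section \ref{construction} depends on the parabolic type $\sigma$ through the flag data, while the polarisation and the open locus of semistability vary with the weights. Corollary \ref{codimension G} gives $\text{codim}\,\mathcal{R}^{uss} > \deg D / K + \text{rank}(\mathcal{E}_{s,\mathsf{s}})(g-1)$, which under our hypotheses is at least $2$. This codimension bound allows one to compare GIT quotients corresponding to different weight choices on a common ambient $\mathcal{R}$, to extend sections of relevant line bundles across the small discrepancy locus, and to transfer the globally F-regular type property via the splitting criterion Lemma \ref{generating lemma}. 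A natural surjection onto $M_{G,P}$ from the (Schubert-stratified, hence locally globally F-regular) infinite Grassmannian $X_P^{ss}$ of Section \ref{Inf Gr} provides an additional geometric source of splittings to draw on.

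The main obstacle, in my view, is carrying out this transfer cleanly. Changing the parabolic weights simultaneously alters the stability condition and the polarisation entering the GIT construction, so there is no direct morphism between moduli spaces for different weight data; the comparison must take place on a common parameter space, and the relevant line bundles must be shown to match up modulo loci of codimension at least $2$. The expected form of Proposition \ref{key prop} is a descent statement asserting that globally F-regular type passes from a smooth ambient parameter space to its GIT quotient whenever the unsemistable locus has sufficiently large codimension and the structure-sheaf pushforward splits; verifying the hypotheses uniformly across all admissible genera, ramification divisors, and weight configurations, and making the comparison between the Fano-weight case and the general case rigorous, is the principal technical difficulty.
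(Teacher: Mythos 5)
Your overall toolkit is the right one (Fano weights via Propositions \ref{Fano1} and \ref{Fano2}, the transfer Proposition \ref{key prop}, the codimension estimate Corollary \ref{codimension G}), but your second step has a genuine gap. You propose to compare the two weight choices on the \emph{same} ambient $\mathcal{R}$ and you assert that the codimension bound $\mathrm{codim} > \deg D/K + \mathrm{rank}(\mathcal{E}_{s,\mathsf{s}})(g-1)$ ``under our hypotheses is at least $2$.'' The theorem has no such hypotheses: it is claimed for every smooth curve (including $g=0,1$) and every parabolic datum, and for small genus or small $\deg D$ (or large $K$) this bound can be $\leq 1$ or even negative, so the hypothesis $\mathrm{Codim}(\mathcal{R}'\setminus W)\geq 2$ of Proposition \ref{key prop} simply fails on your common ambient space. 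The missing idea in the paper's proof is the auxiliary divisor: one chooses $D'$ disjoint from $D$, of even degree large enough that $\frac{\deg D+\deg D'}{2\chi_G}+(r-1)(g-1)\geq 2$, puts full flags $\vec n(x)=(1,\dots,1)$ at the points of $D'$, and works with the enlarged parameter space $\mathcal{R}'=\mathcal{R}\times_Z\bigl(\times_Z\,Flag_{\vec n(x)}(\mathcal{F}_x)\bigr)$ for a form twisted by $\mathcal{O}_C(-D-D')$. The forgetful map $\hat f:\mathcal{R}'\to\mathcal{R}$ is a flag bundle, which is exactly what furnishes the $G$-invariant $p$-compatible morphism with $\hat f_*\mathcal{O}_{\mathcal{R}'}=\mathcal{O}_{\mathcal{R}}$ required by Proposition \ref{key prop}; the Fano weights $K=2\chi_G$, $a_{i+1}(x)-a_i(x)=m_i(x)$ are imposed on $D\cup D'$, so the reference quotient $Z=(\mathcal{R}')^{ss}(L')//SL(V)$ is Fano (hence of globally F-regular type), and the enlarged degree makes Corollary \ref{codimension G} deliver the codimension $\geq 2$ for $W=(\mathcal{R}')^{s}$. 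Without enlarging the parabolic divisor you have neither the codimension bound nor, in general, a correct source variety to transfer from.

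Two smaller inaccuracies: Proposition \ref{key prop} is not a descent statement from a smooth ambient space to its GIT quotient; it transfers globally F-regular type from one GIT quotient $Z$ to another GIT quotient $Y$ through a $p$-compatible morphism $\hat f$ of the ambient parameter spaces satisfying the stated conditions on $W$ and $\hat X=\varphi^{-1}\varphi(\hat X)$. And the infinite Grassmannian $X_P^{ss}$ of Section \ref{Inf Gr} is not used to produce Frobenius splittings of $M_{G,P}$; its role is only to compare the canonical bundle with the theta bundle (via the injectivity of $\phi^*$ on Picard groups) in order to establish the Fano property under the special weights. Your first step (Fano with rational singularities implies globally F-regular type, applied to the GIT quotient of the smooth $\mathcal{R}^{ss}$) is correct and coincides with the paper's use of Propositions \ref{Fano1} and \ref{Fano2}, but in the paper it is applied to the auxiliary moduli space attached to $D\cup D'$, not to the original one.
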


\begin{cor}

Let $\Theta_{D_l}$ be the theta line bundle over $M_{G,P}$ define before, then $${\rm H}^i(M_P, \Theta_{D_l})=0$$ for any $i>0$.

\end{cor}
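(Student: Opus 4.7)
The plan is to combine three ingredients developed in earlier sections: the GIT construction of $M_{G,P}$ as a quotient of the smooth variety $\mathcal{R}^{ss}$ from Section \ref{construction}, the codimension estimates for unsemistable loci from Section \ref{cod est}, and the Fano property under special weights established in Section \ref{Inf Gr}.

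First, I would establish a special case. When $K = 2\chi_G$ and $a_{i+1}(x) - a_i(x) = m_i(x)$, Propositions \ref{Fano1} and \ref{Fano2} yield that $M_{G,P}$ is Fano, and the construction of Section \ref{construction} tells us that $M_{G,P}$ is normal, Cohen--Macaulay with rational singularities. The \emph{key Proposition} \ref{key prop} referenced in the introduction should then deliver globally F-regular type for $M_{G,P}$ in this case; based on the surrounding discussion, this key proposition is presumably a characteristic-zero incarnation of the Smith--Schwede principle that a Fano variety with sufficiently mild (log terminal) singularities is of globally F-regular type. The verification of log-terminality here will lean on the fact that $M_{G,P}$ is a GIT quotient of the smooth variety $\mathcal{R}^{ss}$ by a reductive group, so that Boutot-type results apply.

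To handle arbitrary weights, I would exploit the equivalence between parabolic symplectic/orthogonal bundles on $C$ and orbifold symplectic/orthogonal bundles on the Kawamata cover $Y$ (Proposition \ref{symplectic orifold corr}). The weight data translates into the $\Gamma$-linearization on the orbifold side, and changing weights corresponds to changing the linearization rather than altering the underlying principal-bundle structure. This suggests a variation-of-GIT or wall-crossing morphism between moduli spaces for different weight vectors. The objective is to construct a dominant morphism $f : M' \rightarrow M_{G,P}$ from a moduli space $M'$ corresponding to the special weights for which the natural map $\mathcal{O}_{M_{G,P}} \rightarrow f_*\mathcal{O}_{M'}$ splits, for instance because $f_*\mathcal{O}_{M'} = \mathcal{O}_{M_{G,P}}$; then Lemma \ref{generating lemma} would propagate the globally F-regular type property from $M'$ to the arbitrary-weight $M_{G,P}$.

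The main difficulty will lie in constructing such a comparison morphism explicitly and verifying the splitting. The codimension estimates of Corollary \ref{codimension G} are essential at this stage: they guarantee that the unsemistable locus in any parameter family is of large codimension, allowing splittings that are defined on the stable/smooth locus to be extended to the whole moduli space by normality. Once the main theorem is in hand, the stated corollary on vanishing of $H^i(M_P, \Theta_{D_l})$ for $i > 0$ follows at once from the nef-line-bundle vanishing enjoyed by globally F-regular type varieties recalled in Section \ref{section 6}, since $\Theta_{D_l}$ is ample and in particular nef by Theorem \ref{theta line bundle}.
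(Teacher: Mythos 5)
Your closing step is the same as the paper's: $\Theta_{D_l}$ is ample by Theorem \ref{theta line bundle}, hence nef, and once $M_{G,P}$ is known to be of globally F-regular type the vanishing is part (2) of the theorem on globally F-regular type varieties. Had you simply invoked Theorem \ref{Main thm} at that point, the corollary would be done. The problem is the route you sketch to globally F-regularity for arbitrary weights, which is where the real content lies and where your plan has a genuine gap.

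You propose to connect an arbitrary-weight moduli space to a special-weight (Fano) moduli space $M'$ by a dominant morphism $f\colon M'\rightarrow M_{G,P}$ with $f_*\mathcal{O}_{M'}=\mathcal{O}_{M_{G,P}}$ and then apply Lemma \ref{generating lemma}. No such morphism is constructed, and one should not expect it to exist: semistability depends on the weights, so varying the weights (or the linearization on the orbifold side) produces at best a wall-crossing birational correspondence between the two moduli spaces, not a regular morphism to which Lemma \ref{generating lemma} could be applied; normality plus the codimension estimate does not let you promote a rational map to a morphism with split structure map. The paper avoids comparing the moduli spaces directly: it enlarges the parabolic divisor to $D\cup D'$ (choosing $D'$ so that $\frac{\deg D+\deg D'}{2\chi_G}+(r-1)(g-1)\geq 2$ and imposing the full-flag type at points of $D'$), sets $K=2\chi_G$ with the Fano weights only on the enlarged problem, and uses the forgetful map $\hat f\colon \mathcal{R}'\rightarrow\mathcal{R}$ between the GIT parameter spaces, which is an honest flag bundle and hence $p$-compatible with $\hat f_*\mathcal{O}_{\mathcal{R}'}=\mathcal{O}_{\mathcal{R}}$. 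Global F-regularity then passes from the Fano quotient $Z=(\mathcal{R}')^{ss}(L')//SL(V)$ to $Y=\mathcal{R}^{ss}(L)//SL(V)$ via Proposition \ref{key prop}, which is Proposition 2.10 of \cite{SZh18}, a transfer statement for GIT quotients — not, as you describe it, the Fano-implies-globally-F-regular-type principle (that is the quoted Proposition 6.3 of \cite{S00}, used only to handle $Z$ itself). Corollary \ref{codimension G} enters precisely to verify the hypothesis $\mathrm{Codim}(\mathcal{R}'\setminus W)\geq 2$ for $W=(\mathcal{R}')^{s}$, together with the saturation condition $\hat X=\varphi^{-1}\varphi(\hat X)$, not to extend splittings over the moduli space. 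Without either an actual construction of your comparison morphism between moduli spaces, or this detour through $\mathcal{R}'\rightarrow\mathcal{R}$ and Proposition \ref{key prop}, the argument for arbitrary weights does not close.
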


Our beginning example of globally F-regular type variety is Fano variety.

\begin{prop}[Proposition 6.3 in \cite{S00}]

A Fano variety over $K$ with at most rational singularities is of globally F-regular type.

\end{prop}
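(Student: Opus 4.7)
The plan is to pass to the anticanonical section ring of $X$ and then exploit the characteristic-zero/positive-characteristic correspondence between rational singularities and Frobenius singularities. First I would choose a finitely generated $\mathbb{Z}$-subalgebra $R \subset K$ and a flat model $\pi: X_R \to \mathrm{Spec}\, R$ such that $-K_{X_R/R}$ is relatively $\mathbb{Q}$-Cartier and $\pi$-ample with a fixed index $m$ (so that $\omega_{X_R/R}^{-m}$ is a genuine $\pi$-ample line bundle). After shrinking $\mathrm{Spec}\, R$, standard constructibility arguments guarantee that every closed fiber $X_s$ is a normal Fano variety with rational singularities inherited from the generic fiber.

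The central reduction is the criterion of Smith (Theorem 3.10 of \cite{S00}): a normal projective variety is globally F-regular if and only if the section ring with respect to some ample line bundle is a strongly F-regular ring. Applying this with $L = \omega_X^{-m}$, I reduce the problem to showing that
$$S \;=\; \bigoplus_{n \geq 0} H^0(X, \omega_X^{-mn})$$
is of strongly F-regular type. In characteristic zero, $\mathrm{Spec}\, S$ is the affine cone over $X$ with polarization $\omega_X^{-m}$, and combining rational singularities of $X$ with Kodaira vanishing on a resolution kills all higher cohomology $H^i(X, \omega_X^{-mn}) = 0$ for $i > 0$, $n > 0$, which is precisely what is needed to lift rational singularities from $X$ to the vertex of $\mathrm{Spec}\, S$.

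Next I would invoke the Hara--Smith theorem that rational singularities in characteristic zero have F-rational type, so after further shrinking $\mathrm{Spec}\, R$, the reductions $S_s$ are F-rational for a dense set of primes $p$. To upgrade F-rational to strongly F-regular, I would use that $S$ is $\mathbb{Q}$-Gorenstein by construction, because the canonical class of the cone is a rational multiple of the irrelevant ideal (the polarization was chosen to be $\omega_X^{-m}$); equivalently, the Fano hypothesis together with rational singularities forces $X$ to be Kawamata log terminal, so Hara's theorem that klt singularities are of strongly F-regular type applies. Either formulation yields that $S_s$ is strongly F-regular for $p \gg 0$, and Smith's criterion applied in positive characteristic then gives global F-regularity of $X_s$, establishing globally F-regular type for $X$.

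The main obstacle is the upgrade from F-rational to strongly F-regular: F-rationality is the correct positive-characteristic analog of rational singularities and is strictly weaker than strong F-regularity in general. The Fano hypothesis is essential precisely because it forces the section ring to be $\mathbb{Q}$-Gorenstein with canonical class concentrated on the vertex, which is what converts \emph{rational} into \emph{log terminal} and therefore into strong F-regularity after reduction modulo $p$. Without the Fano condition the section ring need not be $\mathbb{Q}$-Gorenstein and this final step would genuinely fail.
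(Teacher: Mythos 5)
The paper offers no proof of this statement---it is quoted verbatim as Proposition 6.3 of \cite{S00}---and your argument is essentially Smith's original one: pass to the anticanonical section ring, use the vanishing $H^i(X,\omega_X^{-mn})=0$ to transfer rational singularities to the cone, identify the cone as (log) canonical via the $\mathbb{Q}$-Gorenstein structure coming from the anticanonical polarization, and invoke Hara--Watanabe (klt $\Rightarrow$ strongly F-regular type) together with Smith's equivalence between global F-regularity of $X$ and strong F-regularity of a section ring. The only point to watch is that ``rational $+$ $\mathbb{Q}$-Gorenstein'' does not imply klt in general; here it is rescued because the paper's definition of Fano makes $\omega_X^{-1}$ an honest line bundle, so the anticanonical cone is Gorenstein and Elkik's theorem (rational $+$ Gorenstein $\Rightarrow$ canonical) applies. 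With that caveat made explicit, the proof is correct.
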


With our beginning example, the next step is to ask whether Lemma \ref{generating lemma} holds in characteristic zero. To answer such question, in \cite{SZh18}, they introduced the following:

\begin{defn}

A morphism $f: X\rightarrow Y$ of varieties over $K$ is called \emph{$p$-compatible} if there is an integral model $$f_R: X_R\longrightarrow Y_R$$ such that, if for any $s\in S=\text{Spec}R$, we put $X_s=X_R\times_S\text{Spec}\overline{k(s)}$ , $Y_s=Y_R\times_S\text{Spec}\overline{k(s)}$ and consider
\begin{equation*}
\xymatrix{
X_s\ar[r]^{j_s} \ar[d]_{f_s}&X_R\ar[d]^{f_R}\\
Y_s\ar[r]^{i_s}&Y_R}
\end{equation*}
then we have that $i_s^*f_{R*}\mathcal{O}_{X_R}=f_{s*}j_s^*\mathcal{O}_{X_R}$ holds for a dense set of $s$.

\end{defn}

It can be shown that if $f: X\rightarrow Y$ is a flat proper morphisms such that $\textbf{R}^if_{*}\mathcal{O}_X=0$ for all $i\geq 1$, then $f$ is $p$-compatible.

To prove our main theorem, we need to introduce a key proposition from \cite{SZh18}.

Let $(\mathcal{R}^{\prime}, L^{\prime})$ and $(\mathcal{R},L)$ be two polarized projective varieties over $K$, with linear actions by a reductive group scheme $G$ over $K$ respectively. We use $(\mathcal{R}^{\prime})^{ss}(L^{\prime})\subseteq \mathcal{R}^{\prime}$ and $\mathcal{R}^{ss}(L)\subseteq \mathcal{R}$ to denote the GIT semistable locus, then there are projective GIT quotients: $$\psi:\mathcal{R}^{ss}(L)\rightarrow Y:=\mathcal{R}^{ss}(L)//G\ ,\ \varphi:(\mathcal{R}^{\prime})^{ss}(L^{\prime})\rightarrow Z:=(\mathcal{R}^{\prime})^{ss}(L^{\prime})//G$$

\begin{prop}[Proposition 2.10 of \cite{SZh18}]\label{key prop}

Let $\mathcal{R}$, $\mathcal{R}^{\prime}$ as above. Considering the following diagram, assume
\begin{enumerate}
\item[(1)] there is a $G$-invariant $p$-compatible morphism $\hat{f}:\mathcal{R}^{\prime}\rightarrow \mathcal{R}$ such that $\hat{f}_*\mathcal{O}_{\mathcal{R^{\prime}}}=\mathcal{O}_{\mathcal{R}}$;
\item[(2)] there is a $G$-invariant open subset $W\subset (\mathcal{R}^{\prime})^{ss}(L^{\prime})$ such that $$\emph{Codim}(\mathcal{R}^{\prime}\setminus W)\geq 2,\ \hat{X}=\varphi^{-1}\varphi(\hat{X})$$ where $\hat{X}=W\cap \hat{f}^{-1}(\mathcal{R}^{ss}(L))$. And we put $X=\varphi(\hat{X})$.
\end{enumerate}
Then if $Z$ is of globally F-regular type, so is $Y$.

\begin{equation*}
\xymatrix{
(\mathcal{R}^{\prime})^{ss}(L^{\prime}) \ar@{^(->}[r] \ar[dd]^{\varphi} & \mathcal{R}^{\prime} \ar[rr]^{\hat{f}} && \mathcal{R} & \mathcal{R}^{ss}(L)\ar[dd]^{\psi} \ar@{_(->}[l]\\
& W\ar@{^(->}[u] \ar@{^(->}[ul] & \hat{X}=W\cap \hat{f}^{-1}(\mathcal{R}^{ss}(L)) \ar@{_(->}[l] \ar[d] \\
Z && X\ar@{_(->}[ll]\ar[rr]^f  && Y}
\end{equation*}

\end{prop}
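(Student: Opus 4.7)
The plan is to prove the proposition by reducing to characteristic $p$ via integral models, constructing the induced morphism $f : X \to Y$ between GIT quotients, verifying that $X$ is globally F-regular and that $f_{*}\mathcal{O}_{X} = \mathcal{O}_{Y}$ after mod $p$ reduction, and finally appealing to Lemma \ref{generating lemma} to transfer global F-regularity from $X$ to $Y$.

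\medskip

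First I would fix compatible integral models $\mathcal{R}_R, \mathcal{R}'_R, \hat f_R, W_R, \hat X_R$ over a base $S = \operatorname{Spec} R$, together with the induced integral models $Y_R, Z_R$ of the GIT quotients. The hypothesis that $Z$ is of globally F-regular type gives a dense subset $\Sigma \subset S$ of closed points at which $Z_s$ is globally F-regular. Shrinking $R$ if necessary, we can arrange simultaneously that the $p$-compatibility identity $i_s^{*}\hat f_{R*}\mathcal{O}_{\mathcal{R}'_R} = \hat f_{s*}j_s^{*}\mathcal{O}_{\mathcal{R}'_R}$ holds for all $s \in \Sigma$, that $\mathcal{R}'_s$ is normal, and that the codimension bound $\operatorname{codim}_{\mathcal{R}'_s}(\mathcal{R}'_s \setminus W_s) \geq 2$ is preserved. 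The condition $\hat X = \varphi^{-1}\varphi(\hat X)$ makes $\varphi|_{\hat X}\colon \hat X \to X$ a GIT quotient, so the $G$-invariant $\hat f$ descends to the morphism $f\colon X \to Y$ appearing in the diagram.

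\medskip

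The main technical step is the identity $f_{s*}\mathcal{O}_{X_s} = \mathcal{O}_{Y_s}$ in characteristic $p$. Starting from $\hat f_{*}\mathcal{O}_{\mathcal{R}'} = \mathcal{O}_{\mathcal{R}}$, $p$-compatibility produces $\hat f_{s*}\mathcal{O}_{\mathcal{R}'_s} = \mathcal{O}_{\mathcal{R}_s}$; restricting to the open subset $\mathcal{R}^{ss}(L)_s \subset \mathcal{R}_s$ and then further to $\hat X_s$, and invoking Hartogs extension across the codimension $\geq 2$ complement $(\mathcal{R}'_s \setminus W_s) \cap \hat f_s^{-1}(\mathcal{R}^{ss}(L)_s)$ using normality of $\mathcal{R}'_s$, yields $(\hat f|_{\hat X_s})_{*}\mathcal{O}_{\hat X_s} = \mathcal{O}_{\mathcal{R}^{ss}(L)_s}$. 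Taking $G$-invariants through the GIT quotients $\varphi|_{\hat X_s}$ and $\psi_s$ produces the desired $f_{s*}\mathcal{O}_{X_s} = \mathcal{O}_{Y_s}$. Meanwhile $X_s \subseteq Z_s$ is an open subset whose complement has codimension $\geq 2$ (inherited via $\varphi$ from $\mathcal{R}'_s \setminus W_s$), and any such open subset of a globally F-regular normal variety is again globally F-regular: an effective Weil divisor on $X_s$ extends uniquely to $Z_s$ by normality, and a splitting of the twisted Frobenius on $Z_s$ restricts to one on $X_s$.

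\medskip

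With $X_s$ globally F-regular and $f_{s*}\mathcal{O}_{X_s} = \mathcal{O}_{Y_s}$, the natural morphism $\mathcal{O}_{Y_s} \to f_{s*}\mathcal{O}_{X_s}$ is the identity and splits trivially, so Lemma \ref{generating lemma} concludes that $Y_s$ is globally F-regular; since this holds for the dense set $\Sigma$, $Y$ is of globally F-regular type. The hardest step will be the descent of $\hat f_{*}\mathcal{O}_{\mathcal{R}'} = \mathcal{O}_{\mathcal{R}}$ on the parameter schemes to $f_{*}\mathcal{O}_{X} = \mathcal{O}_{Y}$ on the GIT quotients: one must thread together the $p$-compatibility, the Hartogs extension across a codimension $\geq 2$ subset, and the passage to $G$-invariants, all while ensuring these operations commute with mod $p$ reduction over a dense set of primes.
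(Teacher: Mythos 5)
The paper does not actually prove this proposition: it is quoted verbatim from Sun--Zhou (Proposition 2.10 of \cite{SZh18}) and used as a black box, so the only meaningful comparison is with the argument in that reference --- and your reconstruction is essentially that argument. Your outline is sound: spread out integral models so that normality, the codimension bound on $\mathcal{R}'\setminus W$, the semistable loci, the quotient maps and the $p$-compatibility identity all reduce well modulo a dense set of primes; use saturatedness $\hat{X}=\varphi^{-1}\varphi(\hat X)$ to see that $\varphi|_{\hat X}:\hat X\to X$ is again a good quotient and that $\hat f$ descends to $f:X\to Y$; deduce $\hat f_{s*}\mathcal{O}_{\mathcal{R}'_s}=\mathcal{O}_{\mathcal{R}_s}$ from $p$-compatibility, restrict over $\mathcal{R}^{ss}(L)_s$, and use normality plus $\mathrm{Codim}(\mathcal{R}'_s\setminus W_s)\geq 2$ (Hartogs) to get $(\hat f|_{\hat X_s})_*\mathcal{O}_{\hat X_s}=\mathcal{O}_{\mathcal{R}^{ss}(L)_s}$; then the good-quotient property $\mathcal{O}(V)=\mathcal{O}(\psi_s^{-1}V)^G$ on both sides gives $f_{s*}\mathcal{O}_{X_s}=\mathcal{O}_{Y_s}$, and Lemma \ref{generating lemma} transfers global F-regularity from $X_s$ to $Y_s$. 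One inaccuracy: your claim that $Z_s\setminus X_s$ has codimension $\geq 2$ does not follow from the hypotheses, since $\hat X$ is cut out not only by $W$ but also by $\hat f^{-1}(\mathcal{R}^{ss}(L))$, whose complement can be a divisor; fortunately this is harmless, because the closure-of-divisor argument you give shows that \emph{any} open subset of a normal globally F-regular variety is globally F-regular, so only openness of $X_s$ in $Z_s$ is needed, and the codimension hypothesis on $\mathcal{R}'\setminus W$ is consumed entirely by the Hartogs step.
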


Finally, we will prove our main theorem:

\begin{proof}[Proof of Theorem \ref{Main thm}]

We choose an effective divisor $D^{\prime}$ of $C$ such that $D^{\prime}\cap D=\emptyset$, deg$D^{\prime}$ being even and $$\dfrac{\text{deg}(D)+\text{deg}(D^{\prime})}{2\chi_G}+(r-1)(g-1)\geq 2$$ and for each $x\in D^{\prime}$, we put $\overrightarrow{n}(x)=(1,\cdots, 1)$. Let $Z^{\prime}$ be the scheme parametrizing symplectic/orthogonal bundles $(E, \omega)$ where $\omega : E\otimes E \rightarrow \mathcal{O}_C(-D-D^{\prime})$ as we constructed in section \ref{construction}. We see that $Z^{\prime}\cong Z$. Then we let $$\mathcal{R}^{\prime}=\underset{x\in D\cup D^{\prime}}{\times_Z}Flag_{\vec n(x)}(\mathcal F_x)=\mathcal{R}\times_Z\big(\underset{x\in D^{\prime}}{\times_Z}Flag_{\vec n(x)}(\mathcal F_x)\big)\stackrel{\hat{f}}{\longrightarrow} \mathcal{R}.$$

So $\hat{f}:\mathcal{R}^{\prime}\rightarrow \mathcal{R}$ is a flag bundle and hence $p$-compatible with $\hat{f}_*\mathcal{O}_{\mathcal{R}^{\prime}}=\mathcal{O}_{\mathcal{R}}$. We choose polarization for $\mathcal{R}^{\prime}$ and $\mathcal{R}$ as the ones given in Section \ref{construction}, say $L^{\prime}$ and $L$ Clearly there are $SL(V)$ action on $\mathcal{R}^{\prime}$ and $\mathcal{R}$ and $\hat{f}$ is $SL(V)$-invariant.

Now we put $K=2\chi_G$ and give weights for $\mathcal{R}^{\prime}$ by $\overrightarrow{a}(x)$ satisfying $a_{i+1}(x)-a_i(x)=m_i(x)$ and $a_{i}(x)+a_{2l_x+2-i}(x)=K$ for $1\leq i \leq l_x$ and any  $x\in D\cup D^{\prime}$. So by Proposition \ref{Fano1} and \ref{Fano2} we see that $Z:=(\mathcal{R}^{\prime})^{ss}(L^{\prime})//SL(V)$ is a Fano variety. We use $\varphi: (\mathcal{R}^{\prime})^{ss}\rightarrow Z$ to denote the quotient map.

Moreover, if one let $W=(\mathcal{R}^{\prime})^{s}$, $\hat{X}=W\cap \hat{f}^{-1}(\mathcal{R}^{ss})$ and $X=\varphi(\hat{X})$ then clearly $\hat{X}=\varphi^{-1}(X)$. By Corollary \ref{codimension G} and our assumption, we would have: $\text{Codim}(\mathcal{R}^{\prime}\setminus W)\geq 2$. Now Proposition \ref{key prop} shows that the moduli space of parabolic symplectic/orthogonal bundles $Y:=\mathcal{R}(L)^{ss}//SL(V)$ is of globally $F$-regular type.
\end{proof}

\bibliographystyle{plain}

\bibliography{ref}

\end{document}